\newtheorem{theorem}{Theorem}[section]
\theoremstyle{definition}
\newtheorem{proposition}[theorem]{Proposition}
\newtheorem{lemma}[theorem]{Lemma}
\newtheorem{remark}[theorem]{Remark}
\newtheorem{question}[theorem]{Question}
\renewcommand{\setminus}{{\smallsetminus}}
\newcommand{\Z}{{\mathbb{Z}}}
\newcommand{\C}{{\mathbb{C}}}
\newcommand{\QQ}{{\mathbb{Q}}}
\newcommand{\psl}{{\mathrm{PSL}_2(\mathbb{C})}}
\def\BN{\mathbbm N}
\def\BH{\mathbbm H}
\def\BZ{\mathbbm Z}
\def\BQ{\mathbbm Q}
\def\BC{\mathbbm C}
\def\calT{\mathcal T}
\def\calG{\mathcal G}
\def\ti{\widetilde}
\def\SL{\mathrm{SL}}
\def\PSL{\mathrm{PSL}}
\def\be{\begin{equation}}
\def\ee{\end{equation}}
\def\Ann{\mathrm{Ann}}
\def\ev{\mathrm{ev}}
\def\loc{\mathrm{loc}}
\def\rat{\mathrm{rat}}
\def\llbracket{[\![}
\def\rrbracket{]\!]}
\def\vc{{\vec c}}
\begin{document}

\title[A diagrammatic approach to the AJ Conjecture]{
A diagrammatic approach to the AJ Conjecture} 

\author{Renaud Detcherry}
\address{Max Planck Institute for Mathematics \\
         Vivatsgasse 7, 53111 Bonn, Germany \newline
         {\tt \url{http://people.mpim-bonn.mpg.de/detcherry}}
         }
\email{detcherry@mpim-bonn.mpg.de}
\author{Stavros Garoufalidis}
\address{Max Planck Institute for Mathematics \\
         Vivatsgasse 7, 53111 Bonn, Germany \newline
         {\tt \url{http://www.math.gatech.edu/~stavros}}
         }
\email{stavros@math.gatech.edu}
\thanks{
1991 {\em Mathematics Classification.} Primary 57N10. Secondary 57M25.
\newline
{\em Key words and phrases: Knot, planar projection, planar diagram, 
Jones polynomial, colored Jones polynomial, AJ Conjecture, 
$q$-holonomic sequences, certificate, holonomic modules, gluing equations,
character variety.
}
}

\date{Friday 1 March, 2019}

\begin{abstract} 
The AJ Conjecture relates a quantum invariant, a minimal order recursion
for the colored Jones polynomial of a knot (known as the $\hat{A}$ 
polynomial), with a classical invariant, namely the defining polynomial $A$
of the $\psl$ character variety of a knot. More precisely, the AJ Conjecture
asserts that the set of irreducible factors of the $\hat{A}$-polynomial 
(after we set $q=1$, and excluding those of $L$-degree zero) coincides with 
those of the 
$A$-polynomial. In this paper, we introduce a version of the 
$\hat{A}$-polynomial that depends on a planar diagram of a knot (that
conjecturally agrees with the $\hat{A}$-polynomial) and we prove that it 
satisfies one direction of the AJ Conjecture. Our proof uses
the octahedral decomposition of a knot complement obtained from a planar 
projection of a knot, the $R$-matrix state sum formula for the colored 
Jones polynomial, and its certificate.
\end{abstract}

\maketitle

{\footnotesize
\tableofcontents
}


\section{Introduction}
\label{sec.intro}

\subsection{The colored Jones polynomial and the AJ Conjecture}
\label{sub.jw}

The Jones polynomial of a knot~\cite{Jones} is a powerful knot invariant
with deep connections with quantum field theory, discovered by 
Witten~\cite{Witten:CS}. The discoveries of Jones and Witten gave rise to 
Quantum Topology. An even more powerful invariant is the colored Jones 
polynomial $J_K(n) \in \BZ[q^{\pm 1}]$ of a knot $K$, a sequence of Laurent
polynomials that encodes the Jones polynomial of a knot and its parallels. 
Since the dependence of the colored Jones polynomial $J_K(n)$ on 
the variable $q$ plays no role in our paper, we omit it from the notation.  
The colored Jones polynomial 
determines the Alexander polynomial~\cite{B-NG}, is conjectured to determine
the volume of a hyperbolic knot~\cite{K95, K97, MM}, is conjectured to 
select two out of finitely many slopes of incompressible surfaces of the knot
complement~\cite{Ga:slope}, and is expected to determine the $\SL(2,\BC)$ 
character variety of the knot, viewed from the boundary~\cite{Ga:AJ}.
The latter is the AJ Conjecture, which is the focus of our paper. 

The starting point of the AJ Conjecture~\cite{Ga:AJ} is the fact that 
the colored Jones polynomial $J_K(n)$ of a knot $K$ is 
$q$-holonomic~\cite{gale2}, that is, it satisfies a nontrivial 
linear recursion relation
\be
\label{eq.Jrec}
\sum_{j=0}^d c_j(q,q^n) J_K(n+j) =0, \qquad \text{for all} \,\,\, n \in \BN \,,
\ee
where $c_j(u,v) \in \BZ[u,v]$ for all $j$. 
We can write the above equation in operator form as follows $PJ_K=0$
where $P=\sum_j c_j(q,Q)E^j$ is an element of the ring 
$\BZ[q,Q]\langle E \rangle$ where $EQ=qQE$ are the operators that act on
sequences of functions $f(n)$ by:
\be
\label{eq.EQ}
(Ef)(n)=f(n+1), \qquad (Qf)(n)=q^n f(n) \,.
\ee
Observe that the set 
\be
\label{eq.annf}
\Ann(f)=\{ P \in \BZ[q,Q]\langle E \rangle \, | \, Pf=0 \}
\ee
is a left ideal of $\BZ[q,Q]\langle E \rangle$, nonzero when $f$ is
$q$-holonomic. Although the latter ring is not a principal left ideal domain,
its localization $\BQ(q,Q)\langle E \rangle$ is, and cleaning denominators
allows one to define a minimal $E$-order, content-free element 
$\hat A_K(q,Q,E) \in \BZ[q,Q]\langle E \rangle$ which annihilates the
colored Jones polynomial.

On the other hand, the $A$-polynomial of a knot~\cite{CCGLS} 
$A_K(L,M) \in \BZ[L,M]$ is the defining polynomial for the 
character variety of $\SL(2,\BC)$ representations of the boundary of the 
knot complement that extend to representations of the knot complement. 

The AJ Conjecture asserts that the irreducible factors of $\hat A_K(1,Q,E)$
of positive $E$-degree coincide with those of $A_K(Q,E^{-2})$.
The AJ Conjecture is known for most 2-bridge knots,
and some 3-strand pretzel knots; see~\cite{Le:2bridge} and \cite{LZ:AJ}. 

Let us briefly now discuss the $q$-holonomicity of the colored Jones 
polynomial~\cite{gale2}: this follows naturally 
from the fact that the latter can be expressed as a state-sum formula 
using a labeled, 
oriented diagram $D$ of the knot, placing an $R$-matrix at each crossing 
and contracting indices as described for instance in Turaev's 
book~\cite{Tu:book}. For a diagram $D$ with $c(D)$ crossings, this leads to
a formula of the form
\be
\label{eq.Jw0}
J_K(n)=\sum_{\BZ^{c(D)+1}} w_D(n,k)
\ee
where the summand $w_D(n,k)$ is a $q$-proper hypergeometric function and
for fixed $n$, the support of the summand is a finite set. The fundamental
theoreom of $q$-holonomic functions of Wilf-Zeilberger~\cite{WZ} 
concludes that $J_K(n)$ is $q$-holonomic. 
Usually this ends the benefits of~\eqref{eq.Jw0}, aside from its sometimes 
use as a means of computing some values of the colored Jones
polynomial for knots with small (eg $12$ or less) number of crossings
and small color (eg, $n < 10$). 

Aside from quantum topology, and key to the results of our paper, is
the fact that a planar projection $D$ of a knot $K$ gives rise to an
ideal octahedral decomposition of its complement minus two spheres, and 
thus to a gluing equations variety $\calG_D$ and to an $A$-polynomial $A_D$
reviewed in Section~\ref{sec.ge} below. In~\cite{Kim:octI}, 
Kim-Kim-Yoon prove that $A_D$ coincides with the $A$-polynomial of $K$, and 
in~\cite{Kim:octIII} Kim-Park prove that $\calG_D$ is, up to birational 
equivalence, invariant under Reidemeister moves, and forms a diagrammatic
model for the decorated $\PSL(2,\BC)$ character variety of the knot.

The aim of the paper is to highlight the fact that formulas of the 
form~\eqref{eq.Jw0} lead to further knot invariants which are natural 
from the point of view of holonomic modules and form a rephrasing of the
AJ Conjecture that connects well with the results of~\cite{Kim:octI} 
and~\cite{Kim:octIII}.


\subsection{$q$-holonomic sums}
\label{sub.qholosums}

To motivate our results, consider a sum of the form
\be
\label{eq.fF}
f(n) = \sum_{k \in \BZ^r} F(n,k)
\ee
where $n \in \BZ$ and $k=(k_1,\dots,k_r) \in \BZ^r$ and $F(n,k)$ is a 
proper $q$-hypergeometric function with compact support for fixed $n$. 
Then $f$ is $q$-holonomic but more is true. The annihilator 
$$
\Ann(F) \subset \BQ[q,Q,Q_k]\langle E, E_k \rangle
$$
of the summand 
is a $q$-holonomic left ideal where $E_k=(E_{k_1},\dots,E_{k_r})$ and 
$Q_k=(Q_{k_1},\dots,Q_{k_r})$ are operators, each acting in one of the $r+1$
variables $(n,k)$ with the obvious commutation relations (operators
acting on different variables commute and the ones acting on the same variable 
$q$-commute). Consider the map
\be
\label{eq.phi}
\varphi: \BQ[q,Q]\langle E, E_k \rangle \to \BQ[q,Q]\langle E \rangle,
\qquad \varphi(E_{k_i})=1, \,\, i=1,\dots, r \,.
\ee
It is a fact (see Proposition~\ref{prop:cert} below) that 
\be
\label{eq.cert}
\varphi( \Ann(F) \cap \BQ[q,Q]\langle E, E_k \rangle ) \subset \Ann(f) 
\ee
and that the left hand side is nonzero. Elements of the left hand side
are usually called ``good certificates'', and in practice one uses the above
inclusion to compute a recursion for the sum~\cite{AB,Zeil:creative}.
If $\hat A^c_F(q,Q,E)$ and $\hat A_f(q,Q,E)$ denotes generators of the 
left and the right hand side of~\eqref{eq.cert}, it follows that 
$\hat A_f(q,Q,E)$ is a right divisor of $\hat A^c_F(q,Q,E)$. We will call
the latter the certificate recursion of $f$ obtained from~\eqref{eq.fF}.

In a sense, the certificate recursion of $f$ is more natural than the
minimal order recursion and that is the case for holonomic $D$-modules 
and their push-forward, discussed for instance by Lairez~\cite{Lairez}.

What is more important for us is that 
if one allows presentations of $f$ of the form~\eqref{eq.fF} where $F$
is allowed to change by for instance, consequences of the $q$-binomial
identity, then one can obtain an operator $\hat A^c_f(q,Q,E)$ which 
is independent of the chosen presentation. 

\subsection{Our results}
\label{sub.results}

Applying the above discussion to~\eqref{eq.Jw0} with $F=w_D$, allows 
us to introduce the certificate recursion 
$\hat A_D^c(q,Q,E) \in \BZ[q,Q]\langle E \rangle$
of the colored Jones polynomial, which depends on a labeled, oriented 
planar diagram $D$ of a knot.
We can also define $\hat A_K^c(q,Q,E) \in \BZ[q,Q]\langle E \rangle$ to be the
left gcd of the elements $\hat A_D^c$ in the local ring 
$\BQ(q,Q)\langle E \rangle$, lifted back to $\BZ[q,Q]\langle E \rangle$.

We now have all the ingredients to formulate one direction of a refined AJ 
Conjecture. Our proof uses
the octahedral decomposition of a knot complement obtained from a planar 
projection of a knot, the $R$-matrix state sum formula for the colored 
Jones polynomial, and its certificate.

\begin{theorem}
\label{thm.1}
For every knot $K$, 
\newline
{\rm (a)} $\hat A_K$ divides  $\hat A_K^c$.
\newline
{\rm (b)} Every irreducible factor of $A_K(Q,E^{-2})$ 
of positive $E$-degree is a factor of $\hat A_K^c(1,Q,E)$.
\end{theorem}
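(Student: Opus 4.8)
The plan splits along the two parts of the statement, with (a) essentially formal and (b) geometric.

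The plan for (a) is to read off everything from the certificate inclusion~\eqref{eq.cert}. Applied to the state sum~\eqref{eq.Jw0} with $F=w_D$, Proposition~\ref{prop:cert} gives $\hat A^c_D\in\Ann(J_K)$ for every planar diagram $D$. Working in the localization $\BQ(q,Q)\langle E\rangle$, which is a principal left ideal domain, the extension of $\Ann(J_K)$ is generated by $\hat A_K$, so right division shows that $\hat A_K$ is a right divisor of every annihilator, in particular of each $\hat A^c_D$. By definition $\hat A^c_K$ generates the left ideal $\sum_D\BQ(q,Q)\langle E\rangle\,\hat A^c_D$, which is contained in $\BQ(q,Q)\langle E\rangle\,\hat A_K$; hence $\hat A_K$ right divides $\hat A^c_K$. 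Clearing denominators and using a Gauss-type lemma to descend to content-free elements of $\BZ[q,Q]\langle E\rangle$ yields (a).

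For (b) I would first record the mechanism by which a common classical factor propagates to the gcd. Since $\hat A^c_K$ generates $\sum_D\BQ(q,Q)\langle E\rangle\,\hat A^c_D$, it is a left combination $\hat A^c_K=\sum_D C_D\,\hat A^c_D$. Consequently, if a polynomial $p(Q,E)$ of positive $E$-degree divides $\hat A^c_D(1,Q,E)$ for every $D$, then it divides $\hat A^c_K(1,Q,E)$ as well; the only point to check is that $p$, having positive $E$-degree, is coprime to the $E$-free denominators produced when clearing the $C_D$ and lifting to $\BZ[q,Q]\langle E\rangle$. Thus (b) reduces to the single-diagram claim: each positive-$E$-degree irreducible factor of $A_K(Q,E^{-2})$ divides $\hat A^c_D(1,Q,E)$. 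To prove this I would compute $\Ann(w_D)$ explicitly and pass to $q=1$. The summand $w_D(n,k)$ is a product of $R$-matrix entries, hence a proper $q$-hypergeometric term whose annihilator is generated by the first-order operators $E-a_0$ and $E_{k_i}-a_i$ read off from its shift ratios, with $a_0,a_i$ rational in $(q,Q,Q_k)$. Setting $q=1$ and using coordinates in which $Q=M^2$ and the $Q_{k_i}$ are the octahedral shape parameters, I would identify the zero locus of the symbols with the gluing equations variety $\calG_D$: the internal relations $a_i(1,Q,Q_k)=1$ match the edge/gluing equations of the octahedral decomposition, while $E=a_0(1,Q,Q_k)$ records the longitude holonomy $L$. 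The certificate $\hat A^c_D=\varphi\big(\Ann(w_D)\cap\BQ[q,Q]\langle E,E_k\rangle\big)$ eliminates the internal shifts $E_{k_i}$ via~\eqref{eq.phi}, which at $q=1$ becomes commutative elimination of the shape variables from this system. As any certificate has the telescoping form $\hat A^c_D+\sum_i(E_{k_i}-1)G_i\in\Ann(w_D)$, its symbol vanishes on $\calG_D$ once the equations $a_i=1$ are imposed, so $\hat A^c_D(1,Q,E)$ vanishes on the image of $\calG_D$ under projection to the $(M,L)$-plane. By Kim--Kim--Yoon~\cite{Kim:octI} that image is cut out by $A_D=A_K$, and comparison with the substitution $A_K(Q,E^{-2})$ gives the single-diagram claim for every positive-$E$-degree factor.

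The main obstacle is the last step, specifically the interchange of the $q\to1$ specialization with the non-commutative elimination defining the certificate. I expect to have to verify that the specialization of $\Ann(w_D)$ is flat enough --- that the leading coefficients in the eliminated variables do not vanish at $q=1$ --- so that the commutative elimination of the classical gluing system equals, up to $E$-free factors, the $q=1$ reduction of the $q$-elimination, with no positive-$E$-degree factor of $A_K$ lost to cancellation or absorbed into spurious content. A second, more computational, difficulty is the precise matching of the $R$-matrix shift ratios $a_i$ with the octahedral gluing equations; here I would lean on the dictionary between the state sum and the Neumann--Zagier data underlying~\cite{Kim:octI,Kim:octIII}. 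Finally, the positive-$E$-degree hypothesis is exactly what discards the $L$-degree-zero (meridian) factors and any $E$-free extraneous factors introduced by the resultant, and it must be invoked at the end to conclude both the single-diagram claim and its propagation to $\hat A^c_K$.
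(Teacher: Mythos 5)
Your proposal follows essentially the same route as the paper: part (a) via the certificate inclusion~\eqref{eq.cert} and right-divisibility in the localized principal left ideal domain $\BQ(q,Q)\langle E\rangle$, and part (b) by specializing the first-order generators of $\Ann(w_D)$ at $q=1$, matching them with the gluing equations of the octahedral decomposition, using the telescoping form of the certificate to conclude vanishing on $\calG_D$, and invoking Kim--Kim--Yoon to identify the image of $(w_\mu,w_\lambda)$ with the zero locus of $A_K$. The computational core you defer --- the dictionary between the $R$-matrix shift ratios and the loop equations, and the $q=1$ regularity of the annihilator --- is precisely what the paper carries out in Propositions~\ref{prop:E_0-rec}--\ref{prop.ann2} and Theorem~\ref{thm.match}.
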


\begin{remark}
The $\hat A_K$-polynomial has only been computed in a handful of cases, 
see \cite{GS:twist}, \cite{GM:pretzel}, \cite{GK:pretzel} and \cite{GK:74}. 
In all cases where $\hat A_K$ is known, it is actually 
obtained from certificates and in that case $\hat A_K^c=\hat A_K$.
\end{remark}

\begin{question}
Is it true that for any knot $K$, 
one has $\hat A_K^c=\hat A_K$ ?
\end{question}

\begin{question} 
Is it true that the certificate recursion $\hat A^c_D$ of a planar
projection of a knot is invariant under Reidemester moves on $D$?
\end{question}

A positive answer to the latter question is a quantum analogue of the fact 
that the gluing equation variety $\calG_D$ associated to a diagram $D$ is 
independent of $D$, a result that was announced by Kim and 
Park~\cite{Kim:octIII}. We believe that the above question has a positive
answer, coming from the fact that the Yang-Baxter equation for the R-matrix
follows from a $q$-binomial identity, 
but we will postpone this investigation to a future publication.

\subsection{Sketch of the proof}
\label{sub.sketch}

To prove Theorem \ref{thm.1}, we fix a planar projection $D$ of an oriented 
knot $K$. On the one hand, the planar projection gives rise to an
ideal decomposition of the knot complement (minus two points) using one
ideal octahedron per crossing, subdividing further each octahedron 
to five ideal tetrahedra. This ideal decomposition gives rise to
a gluing equations variety, discussed in Section~\ref{sec.ge}. 
On the other hand, the planar projection gives a state-sum for the 
colored Jones polynomial, by placing one $R$-matrix per crossing and 
contracting indices. The summand of this state-sum is $q$-proper 
hypergeometric and its annihilator defines an ideal in a quantum
Weyl algebra, discussed in Section~\ref{sec.CJ}. The annihilator ideal
is matched when $q=1$ with the gluing equations ideal in the key
Proposition~\ref{thm.match}. This matching, implicit in the Grenoble notes
of D. Thurston~\cite{Thu:grenoble}, combined with a certificate (which is
a quantum version of the projection map from gluing equations variety to
$\BC^* \times \BC^*$), and with the fact that the gluing equation variety sees
all components of the $\PSL(2,\BC)$ character variety~\cite{Kim:octI},
conclude the proof of our main theorem. 

Our method of proof for Theorem~\ref{thm.1} using certificates to show one
direction of the AJ Conjecture is general and flexible and can be applied 
in numerous other situations, in particular to a proof of one direction
of the AJ Conjecture for state-integrals, and to one direction 
of the AJ Conjecture for the 3Dindex~\cite{And,Dimofte}. This will be
studied in detail in a later publication. For a discussion
of the AJ Conjecture for state-integrals and for a proof in the case of the
simplest hyperbolic knot, see~\cite{And}.

Finally, our proof of Theorem~\ref{thm.1} does not imply any relation
between the Newton polygon of the $\hat A_K(q,Q,E)$ polynomial and that
of $A_K(1,Q,E)$. If the two Newton polygons coincided, the Slope Conjecture
of~\cite{Ga:slope} would follow, as was explained in~\cite{Ga:quadratic}.
Nonetheless, the Slope Conjecture is an open problem.
 

\section{Knot diagrams, their octahedral decomposition and 
their gluing equations}
\label{sec.ge}

\subsection{Ideal triangulations and their gluing equations}
\label{sec:gluingeq-spines}

Given an ideal triangulation $\calT$ of a 3-manifold $M$ with cusps, 
Thurston's gluing equations (one for each edge of $\calT$) 
give a way to describe the hyperbolic structure on $M$ and its
deformation if $M$ is hyperbolic~\cite{Th,NZ}. The gluing equations define  
an affine variety $\calG_\calT$, the so-called gluing equations variety, 
whose definition we now recall. The edges of each combinatorial ideal 
tetrahedron get assigned variables, with opposite edges having the same
variable as in the left hand side of Figure~\ref{fig:dualspine}. 
The triple of variables (often called a triple of shapes of the tetrahedron) 
$$
(z,z',z'')=\left(z, \frac{1}{1-z}, 1-\frac{1}{z} \right) 
$$
satisfies the equations
\be
\label{eq.triplez}
z z' z'' = -1, \qquad z z' -z + 1  = 0 \,
\ee
and every solution of~\eqref{eq.triplez} uniquely defines a triple of
shapes of a tetrahedron. Note that the shapes of the tetrahedron 
$z$, $z'$, or $z''$ lie in $\BC^{**}=\BC\setminus\{0,1\}$, and that they
are uniquely determined by $z \in \BC^{**}$. When we talk about assigning
a shape $z$ to a tetrahedron below, it determines shapes $z'$ and $z''$
as in Figure~\ref{fig:dualspine}.

\begin{figure}[!htpb]
\centering
\def \svgwidth{.8\columnwidth}
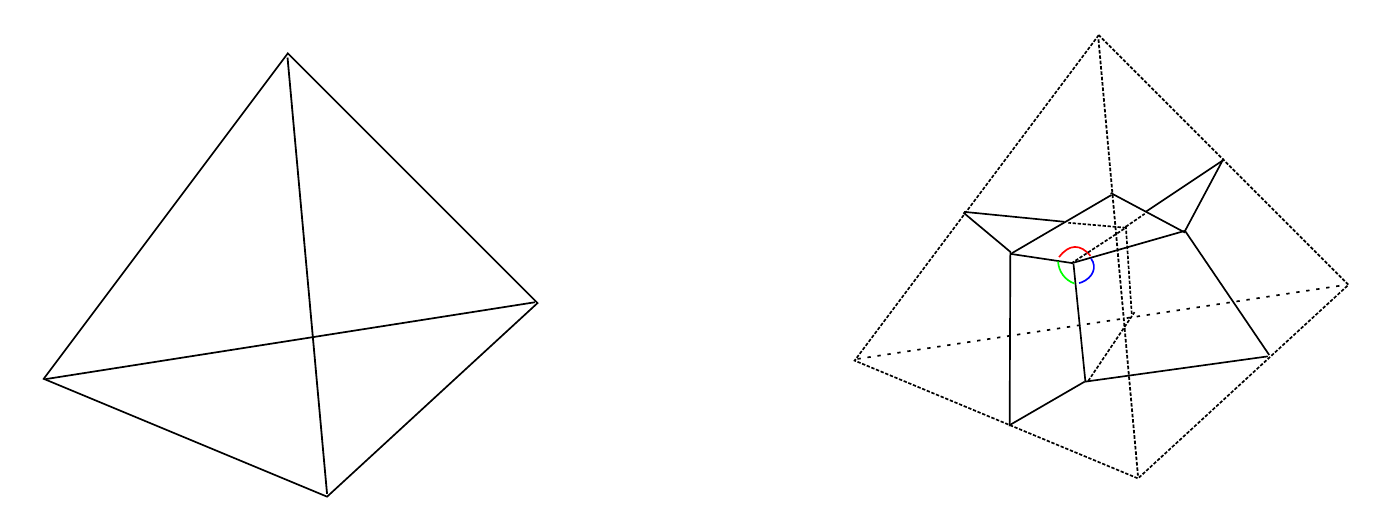
\caption{The dual spine to the triangulation and the shape parameters 
associated to corners of the spine.}
\label{fig:dualspine}
\end{figure}

Given an ideal triangulation $\calT$ with $N$ tetrahedra, assign shapes
$z_i$ for $i=1,\dots,N$ to each tetrahedron. If $e$ is an edge of $\calT$
the corresponding gluing equation is given by
$$
\underset{\Delta \in N(e)}{\prod} z_{\Delta}=1 \,,
$$
where $N(e)$ is the set of all tetrahedra that meet along the edge $e$, 
and $z_{\Delta}$ is the shape parameter corresponding to the edge $e$ of 
$\Delta$. The gluing equation variety $\calG_\calT$ is the affine variety 
in the variables $(z_1,\dots,z_N) \in (\BC^{**})^N$ defined by the edge 
gluing equations, for all edges of $\calT$. Equivalently, it is the 
affine variety in the variables $(z_1,z_1',z_1'',\dots,z_N,z_N',z_N'') 
\in \BC^{3N}$ defined by the edge equations and the 
equations~\eqref{eq.triplez}, one for each tetrahedron.

We next discuss the relation between a solution to the gluing equations
and decorated (or sometimes called, augmented) $\PSL(2,\BC)$ representations 
of the fundamental group of the underlying 3-manifold $M$. The construction
of decorated representations from solutions to the gluing equations
appears for instance in Zickert's thesis~\cite{Zickert:thesis} and also 
in~\cite{GGZ}. Below, we follow the 
detailed exposition by Dunfield given in~\cite[Sec.10.2-10.3]{BDRV}. 

A solution of the gluing equations gives rise to a developing map
$ \ti M \to \BH^3$ from the universal cover $\ti M$ to the 3-dimensional
hyperbolic space $\BH^3$. Since the orientation preserving isometries of
$\BH^3$ are in $\PSL(2,\BC)$, this in turn gives rise to a 
$\PSL(2,\BC)$ representation of the fundamental
group $\pi_1(M)$, well-defined up to conjugation. What's more, we get 
a decorated representation (those were called augmented representations in 
Dunfield's terminology). Following the notation 
of~\cite[Sec.10.2-10.3]{BDRV}, let $\overline{X}(M,\PSL(2,\BC))$ 
denote the augmented character variety of $M$. Thus, we get a map:
\be
\label{eq.dev}
\calG_\calT \to \overline{X}(M,\PSL(2,\BC)) \,.
\ee
So far, $M$ can have boundary components of arbitrary genus. 
When the boundary $\partial M$ consists of a single torus boundary component, 
and $\gamma$ is a simple closed curve on $\partial M$, 
the holonomy of an augmented representation gives a regular function 
$h_\gamma:  \overline{X}(M,\PSL(2,\BC)) \to \BC^*$.  
Note that for a decorated representation $\rho$, the set
of squares of the eigenvalues of $\rho(\gamma) \in \PSL(2,\BC)$ is given
by $\{h_\gamma(\rho), h_\gamma(\rho)^{-1}\}$. Once 
we fix a pair of meridian and longitude $(\mu,\lambda)$ of the boundary 
torus, then we get a map 
\be
\label{eq.lm}
(h_\mu, h_\lambda) : \overline{X}(M,\PSL(2,\BC)) \to \BC^* \times \BC^* \,.
\ee
The defining polynomial of the 1-dimensional components of the above
map is the $A$-polynomial of the 3-manifold $M$. Technically, this is
the $\PSL(2,\BC)$-version of the $A$-polynomial and its precise relation with
the $\SL(2,\BC)$-version of the $A$-polynomial (as defined by~\cite{CCGLS})
is discussed in detail in Champanerkar's thesis~\cite{Ch}; 
see also~\cite[Sec.10.2-10.3]{BDRV}.

We should point out that although~\eqref{eq.dev} is a map of affine varieties, 
its image may miss components of $\overline{X}(M,\PSL(2,\BC))$, and
hence the gluing equations of the triangulation may not detect some factors
of the $A$-polynomial. In fact, when the boundary of $M$ consists of 
tori, the image of~\eqref{eq.dev} always misses 
the components of abelian $\SL(2,\BC)$ representations (and every knot 
complement has a canonical such component), but it may also
miss others. For instance, there is a 5-tetrahedron ideal triangulation of the
$4_1$ knot with an edge of valency one, and for that triangulation, 
$\calG_\calT$ is empty. 

For later use, let us record how to compute the holonomy of a peripheral
curve on the gluing equations variety. Given a path $\gamma$ in a 
component of $\partial M$ that is normal with respect to this triangulation, 
it intersects the triangles of $\partial M$ in segment joining different 
sides. Each segment may go from one side of the triangle to either the 
adjacent left side or right side. Also it separates one corner of the 
triangle from the other two; this corner correspond to a shape parameter 
which we name $z_{left}$ or $z_{right}$ depending whether the segment goes 
left or right. The holonomy of $\gamma$ is then:
$$
h_\gamma =\underset{left \  segments}{\prod} z_{left} 
\underset{right \  segments}{\prod} z_{right}^{-1}.
$$ 

\subsection{Spines and gluing equations}
\label{sub.spine}

The ideal triangulations that we that we will discuss in the next section
come from a planar projection of a knot, and it will be easier to
work with their spines, that is the the dual $2$-skeleton. Because of this
reason, we discuss the gluing equations of an ideal triangulation $\calT$ 
in terms of its spine.
In that case, edges of $\calT$ are dual to 2-cells of the spine, and give
rise to gluing equations. Recall that a spine $S$ of $M$ is a 
CW-complex embedded in $M$, such that each point of $S$ has a neighborhood 
homeomorphic to either $D^2$, $Y\times [0,1]$ where $Y$ is the $Y$-shaped 
graph or to the cone over the edges of a tetrahedron, and such that 
$M\setminus S$ 
is homeomorphic to $\partial M \times [0,1)$. Points of the third type 
are vertices of the spine, points of the second type form the edges of 
the spines and points of the first type form the regions of the spine.

\begin{figure}[!htpb]
\centering
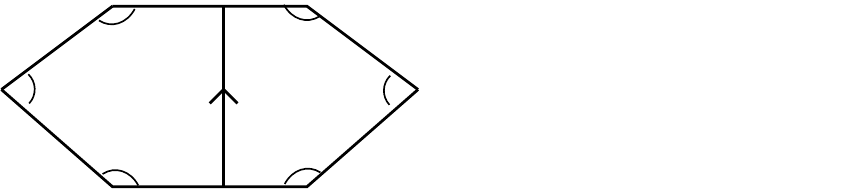
\caption{A segment $\gamma_i$ of a peripheral loop $\gamma$ intersecting 
a region of the spine. The boundary component $\Sigma$ to which $\gamma$ 
belongs lies above the region. In this example, 
$h_{\gamma_i}=-z_1z_2z_3=-\frac{1}{z_4z_5z_6}$.}
\label{fig:holonomy}
\end{figure}

For any ideal triangulation of $M$, the dual spine is obtained as shown in 
Figure~\ref{fig:dualspine}. Shape parameters that were assigned to 
tetrahedra are now assigned to vertices of the spine. At each vertex, 
two opposite corners bear the same shape parameter $z$, and the other 
bear the parameters $z',z''$ according to the cyclic ordering (see 
Figure~\ref{fig:dualspine}). Edge equations translate into region equations, 
the region equation associated to the region $R$ being:
$$
\underset{c \in \textrm
{corners}(R)}{\prod} z_c=1.
$$
For a path $\gamma$ on the spine $S$ that is in normal position with 
respects to $S$, it intersects each region in a collection of segments 
$(\gamma_i)_{i \in I}$. The holonomy of the segment $\gamma_i$ is 
$$
h_{\gamma_i}=-\underset{c \ \textrm{left corner}}{\prod} z_c 
=-\underset{c \ \textrm{right corner}}{\prod} z_c^{-1},
$$
where left and right corners are defined as in Figure~\ref{fig:holonomy}, 
and the holonomy of $\gamma$ is
$$
h_{\gamma}=\underset{i\in I}{\prod}h_{\gamma_i}.
$$

\subsection{The octahedral decomposition of a knot diagram}
\label{sec:5t-gluingequations}

\begin{figure}[!htpb]
\centering
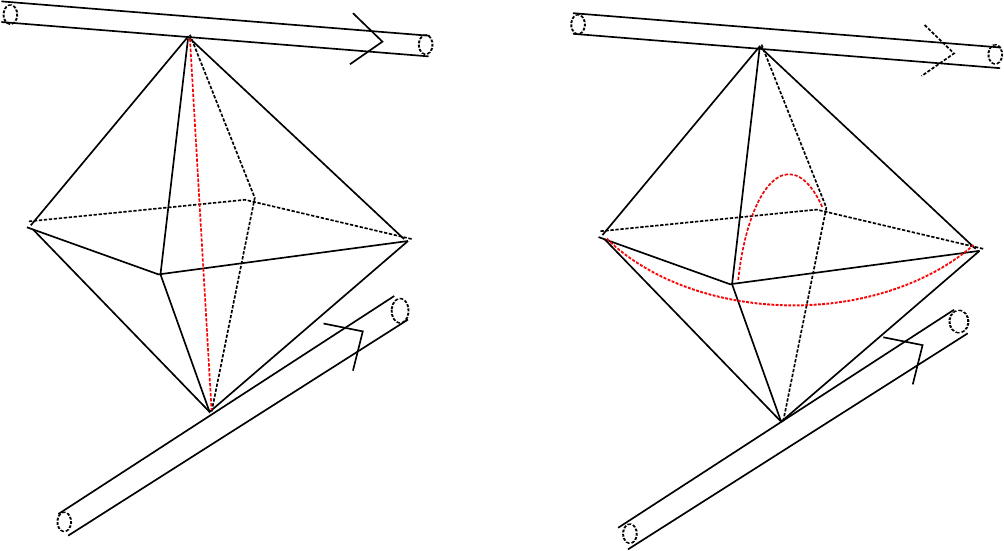
\caption{Any octahedron can be split into $4$ or $5$ tetrahedra by adding 
the red dashed edges to it.}
\label{fig:octahedron}
\end{figure}

In this section we fix a diagram $D$ in $S^2$ of an oriented knot $K$. 
By diagram, we mean an embedded 4-valent graph in the plane, with an 
overcrossing/undercrossing choice at each vertex.
Let $X(D)$ and $c(D)$ denote the set and the number of crossings of $D$. 
In this section 
as well as the remainder of the paper, an arc of $D$ will be the segment 
of the diagram joining two successive crossings of $D$. 
An \textit{overpass} (resp. \textit{underpass}) will be a small portion of 
the upper strand (resp. lower strand) of a crossing. We will denote the 
set of overpasses by $O(D)$ and the set of underpasses by $U(D)$. 
An \textit{overarc} (resp. \textit{underarc}) will be the portion of the 
knot joining two successive underpasses (resp. overpasses). An overarc of 
$K$ may pass through some number of crossings of $K$, doing so as the upper 
strand each time.

Given a diagram $D$ of the knot $K$ with $c(D)$ crossings, let $B_1$ be 
some ball lying above the projection plane and $B_2$ another ball lying 
under the projection plane. A classical construction, first introduced 
by Weeks in his thesis, and implemented in \texttt{SnapPy}
as a method of constructing ideal triangulations of planar projections of 
knots~\cite{snappy, Weeks}, 
yields a decomposition of $S^3\setminus(K\cup B_1 \cup B_2)$ 
into $c(D)$ ideal octahedra. The decomposition works as follows: at each 
crossing of $K$, put an octahedron whose top vertex is on the overpass and 
bottom vertex is on the underpass. Pull the two middle vertices lying on 
the two sides of the overpass up towards $B_1$ and the two other middle 
vertices down towards $B_2$. One can then patch all these octahedra 
together to get a decomposition of $S^3\setminus (K\cup B_1 \cup B_2)$. 
We refer to \cite{Kim:octI} as well as \cite{Thu:grenoble} 
for figures and more details on this construction.

From the octahedral decomposition of $S^3\setminus (K\cup B_1 \cup B_2)$, 
one can get an ideal triangulation of $S^3\setminus (K\cup B_1 \cup B_2)$ 
simply by splitting the octahedra further into tetrahedra. 
There are two natural possibilities for this splitting, as one can cut each 
octahedra into either $4$ or $5$ tetrahedra as shown in 
Figure~\ref{fig:octahedron}. We will be interested in the decomposition 
where we split each octahedra into $5$ tetrahedra, obtaining thus a 
decomposition of $S^3\setminus (K\cup B_1 \cup B_2)$ into $5c(D)$ tetrahedra. 
We denote this ideal triangulation by $\calT^{5T}_D$, and we call it the
``$5T$-triangulation of $D$''.

Since the inclusion map $S^3\setminus (K \cup B_1 \cup B_2) \to S^3\setminus K$
is an isomorphism on fundamental groups, a solution to the gluing equations
of $\calT^{5T}_D$ gives rise to a decorated $\PSL(2,\BC)$ representation
of the knot complement.

\subsection{The spine of the $5T$-triangulation of a knot diagram
and its gluing equations}

Let $\calG_D$ denote the gluing equation variety of $\calT^{5T}_D$.
To write down the equations of $\calG_D$, we will work with the dual 
spine, and use the spine formulation of the gluing 
equations introduced in Section~\ref{sec:gluingeq-spines}. We describe this 
spine just below. 
This well-known spine is studied in detail by several authors 
including~\cite{Kim:octI}.

\begin{figure}[!htpb]
\centering
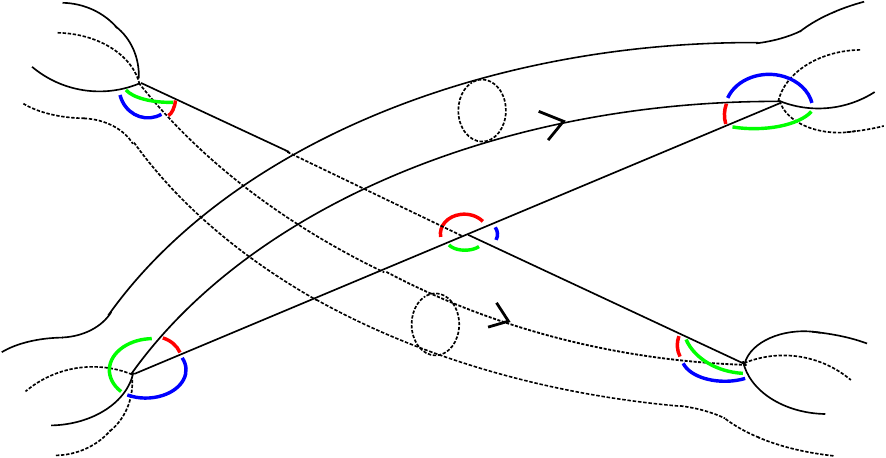
\caption{The $5T$-spine near a crossing of $D$, and the shape parameters 
of each corner of the spine. The arrows specify the orientation of 
strands.}
\label{fig:5tspine}
\end{figure}

Figure~\ref{fig:5tspine} shows a picture of the spine near a crossing 
of $D$. The spine contains $5$ vertices near each crossing of $D$ and can 
be described as follows:

First we embed $K$ in $S^3$ as a solid torus sitting in the middle of 
the projection plane; except for overpasses which go above the projection 
plane and underpasses which go below. We let the boundary of a tubular
neighborhood of $K$ to be a subset of 
the spine. At each crossing we connect the overpass and the underpass 
using two triangles that intersects transversally in one point. Finally 
we glue the regions of the projection plane that lie outside $D$ to the 
rest of the spine. The regions of the spine are then of $3$ types: 

\begin{itemize}
\item
An upper/lower triangle region for each crossing, and $2c(D)$ in total.
\item
For each region of $D$ one gets an horizontal 
region in the spine; we call these \textit{big regions}, $c(D)+2$ in total.
\item
The boundary of a neighborhood of $K$ 
is cut by the triangle regions and the big regions into 
regions lying over the projection plane (\textit{upper shingle region}) and 
some lying under the projection plane (\textit{lower shingle regions}). 
Note that upper shingle regions start and end at underpasses; they are in 
correspondance with the overarcs of the diagram, $c(D)$ in total. 
Similarly, the lower shingle regions are in correspondance with underarcs, 
and there is also $c(D)$ of them.
\end{itemize}

We now assign shape parameters to each vertex of the spine as shown 
in Figure~\ref{fig:5tspine}. There are $5$ shape parameters for each 
crossing $c$: a central one which we call $w_c$ and $4$ others: 
$z_{c,li},z_{c,lo},z_{c,ui},z_{c,uo}$ standing for lower-in, lower-out, 
upper-in and upper-out. When the crossing $c$ we consider is clear, 
we will sometimes write $w,z_{li},z_{lo}\ldots$ dropping the index $c$.

Note that the assignment of shape parameters is such that the main 
version of the parameter $w,z_{li},\ldots$ lies on a corner of a triangle 
region, while the auxiliary $w',w'',z_{li}',z_{li}''\ldots$ are prescribed 
by the cyclic ordering induced by the boundary of 
$S^3\setminus (K\cup B_1 \cup B_2)$.

We can now write down the gluing equations coming from the $5T$-spine:

$\bullet$ The upper/lower triangle equations are (in the notation of
Figure~\ref{fig:5tspine})
\begin{equation}
\label{eq:uppertriangle}
w z_{ui} z_{uo}=1, \qquad w z_{li} z_{lo}=1.
\end{equation}

\begin{figure}[!htpb]
\centering
\def \svgwidth{.9\columnwidth}
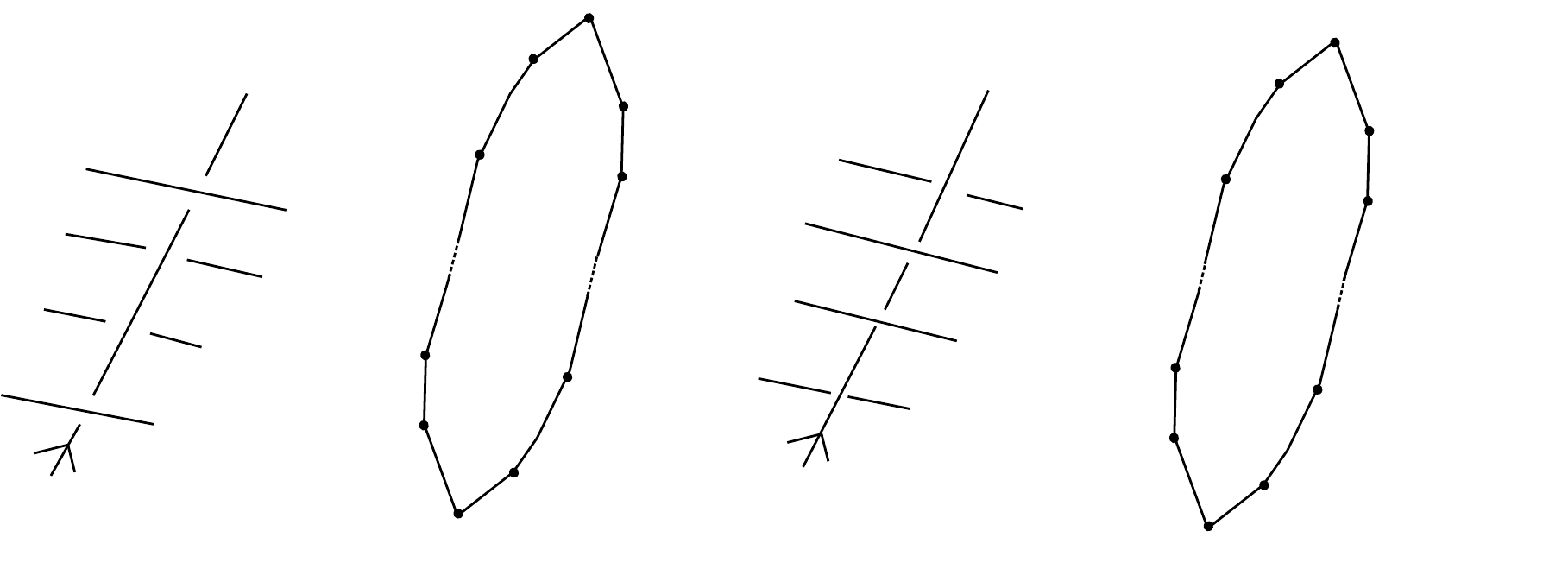
\caption{An overarc (resp. underarc) and the corresponding upper 
(resp. lower) shingle region of the spine, with shape parameters.}
\label{fig:shingleregion}
\end{figure}

$\bullet$ The upper/lower shingle equations. Consider an upper shingle region 
corresponding to an overarc going from some crossing labelled $1$ to the 
crossing $n$, going through crossings $1,2,\ldots ,n-1$ as overpasses. Then 
the shingle region has one corner for each of its ends, and $4$ corners for 
each overpasses, as explained in Figure~\ref{fig:shingleregion}. We get:
$$
z_{1,lo}z_{2,ui}'z_{2,uo}'' \ldots 
z_{n-1,ui}' z_{n-1,uo}'' z_{n,li} z_{n-1,uo}'z_{n-1,ui}'' 
\ldots z_{2,uo}'z_{2,ui}''=1 \,.
$$ 

\begin{lemma}
\label{lem.eshingle}
The upper/lower shingle equations have the equivalent forms, respectively:
\be
\label{eq:uppershingle}
z_{n,lo} = z_{1,lo}w_n^{-1}\underset{j=2}{\overset{n-1}{\prod}} w_j\,, 
\qquad
z_{n,li} = z_{1,li} w_1 \underset{j=2}{\overset{n-1}{\prod}} w_j^{-1} \,.
\ee
\be
\label{eq:lowershingle}
z_{n,ui} = z_{1,ui} w_1 \underset{j=2}{\overset{n-1}{\prod}} w_j^{-1}\,,
\qquad 
z_{n,uo} = z_{1,uo} w_n^{-1} \underset{j=2}{\overset{n-1}{\prod}} w_j \,.
\ee
\end{lemma}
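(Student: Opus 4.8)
The plan is to reduce the single shingle equation stated just above the lemma to the cleaner forms \eqref{eq:uppershingle} and \eqref{eq:lowershingle} by a direct computation, using only the tetrahedron relation $zz'z''=-1$ from \eqref{eq.triplez} and the triangle equations \eqref{eq:uppertriangle}. I will treat the upper shingle equation in detail; the lower one is obtained verbatim by exchanging the roles of the upper and lower parameters.

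First I would group the factors of the upper shingle product by crossing index. For each intermediate crossing $j\in\{2,\dots,n-1\}$ the product contains exactly the four factors $z_{j,ui}'$, $z_{j,ui}''$, $z_{j,uo}'$, $z_{j,uo}''$. Since $zz'z''=-1$ gives $z'z''=-z^{-1}$, I can rewrite
\[
z_{j,ui}'z_{j,ui}''\cdot z_{j,uo}'z_{j,uo}''=(-z_{j,ui}^{-1})(-z_{j,uo}^{-1})=z_{j,ui}^{-1}z_{j,uo}^{-1},
\]
the two signs cancelling. The upper triangle equation $w_j z_{j,ui}z_{j,uo}=1$ then turns this into $w_j$. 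Collecting over all intermediate crossings reduces the shingle equation to
\[
z_{1,lo}\,z_{n,li}\prod_{j=2}^{n-1}w_j=1.
\]

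Finally I would bring in the lower triangle equations \eqref{eq:uppertriangle} at the two endpoints, where the overarc sits at an underpass. At crossing $1$ we have $w_1 z_{1,li}z_{1,lo}=1$, i.e. $z_{1,lo}^{-1}=w_1 z_{1,li}$; substituting into the reduced equation yields the second equation of \eqref{eq:uppershingle}, namely $z_{n,li}=z_{1,li}w_1\prod_{j=2}^{n-1}w_j^{-1}$. At crossing $n$ we have $w_n z_{n,li}z_{n,lo}=1$; eliminating $z_{n,li}$ between this relation and the reduced equation gives the first equation $z_{n,lo}=z_{1,lo}w_n^{-1}\prod_{j=2}^{n-1}w_j$. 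Since the triangle equations are already imposed on $\calG_D$, these two forms are equivalent to each other and to the original shingle equation, which is exactly the assertion. The lower shingle equation \eqref{eq:lowershingle} follows identically, using instead the lower triangle equation $w_j z_{j,li}z_{j,lo}=1$ at the intermediate crossings and the upper triangle equation at the two endpoints.

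I expect the only real obstacle to be the bookkeeping: correctly matching the four parameters of each intermediate crossing in the long alternating product, and checking that the minus signs from $z'z''=-z^{-1}$ cancel in pairs. There is no conceptual difficulty beyond this careful pairing and keeping track of which of the two triangle equations applies at the overpass crossings versus the two underpass endpoints.
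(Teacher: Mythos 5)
Your proof is correct and follows essentially the same route as the paper's: group the four primed parameters at each intermediate overpass, cancel the signs from $z'z''=-z^{-1}$, apply the upper triangle equation to produce $\prod_j w_j^{-1}$, and then use the lower triangle equations at the two endpoint underpasses to pass between the $z_{lo}$ and $z_{li}$ forms. The bookkeeping you flag as the only obstacle is handled correctly, including the observation that the lower shingle case is the same argument with the roles of the upper and lower triangle equations exchanged.
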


\begin{proof}
Grouping together shape parameters coming from the same vertex and using 
$zz'z''=-1$, we get:
$$
z_{n,li}z_{1,lo}=\underset{j=2}{\overset{n-1}{\prod}} z_{j,ui}z_{j,uo}
$$
and then, using Equation~\eqref{eq:uppertriangle}:
$$
z_{n,li}z_{1,lo}= \underset{j=2}{\overset{n-1}{\prod}} w_j^{-1}
$$
Finally, using Equation~\eqref{eq:uppertriangle}, we can rewrite this as 
equation~\eqref{eq:uppershingle} 
between only $z_{lo}$'s (or only $z_{li}$'s) parameters.

Similarly for a lower shingle region corresponding to an underarc 
running from crossing $1$ to crossing $n$, one gets an equation:
$$
z_{1,uo} z_{2,li}'' z_{2,lo}'\ldots 
z_{n-1,li}''z_{n-1,lo}' z_{n,ui} z_{n-1,lo}'' z_{n-1,li}' 
\ldots z_{2,lo}'' z_{2,li}'=1 \,,
$$
which simplifies to~\eqref{eq:lowershingle}.
\end{proof}

\begin{figure}[!htpb]
\centering
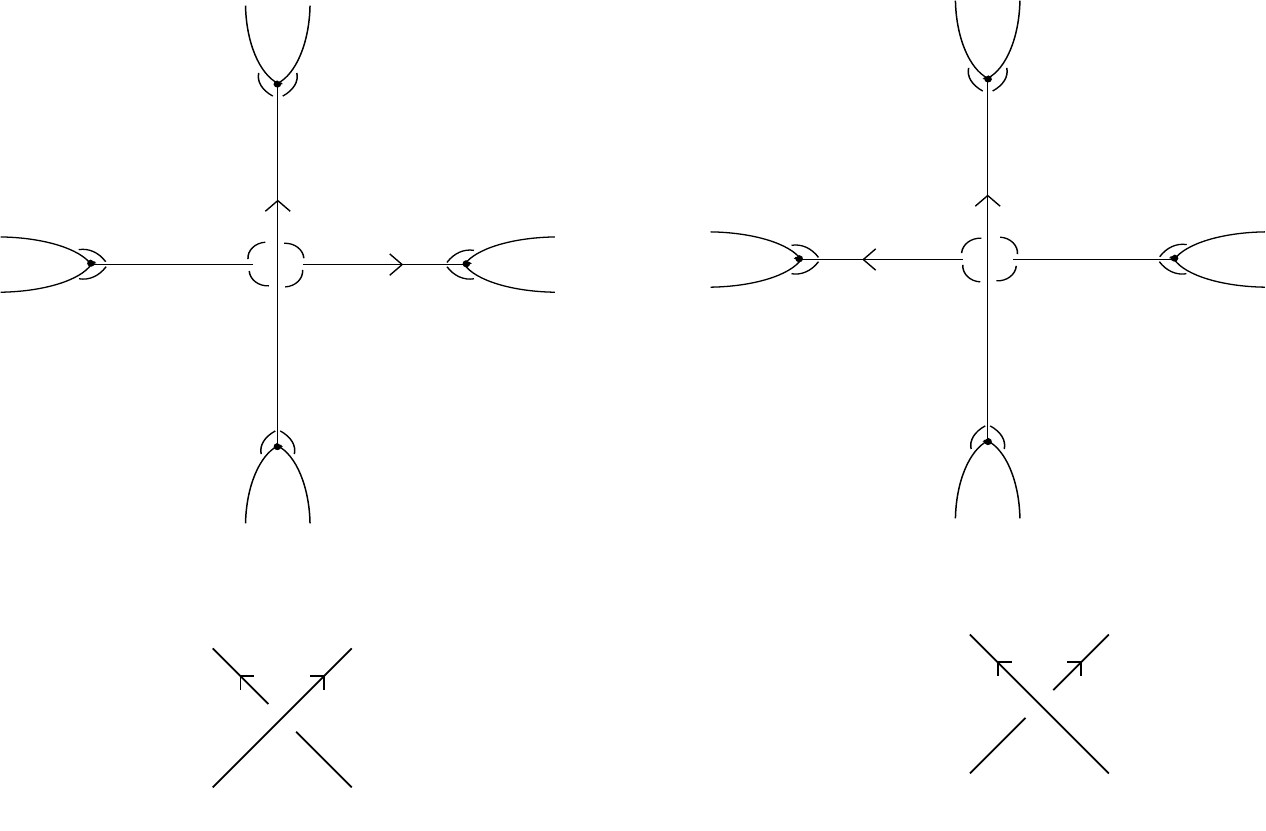
\caption{On the top, a top view of the $5t$-spine near a positive and a 
negative crossing. On the bottom, the rule describing the corner factors.}
\label{fig:cornerfactors}
\end{figure}

$\bullet$ 
Figure~\ref{fig:cornerfactors} shows a top-view of the $5T$-spine 
near a crossing, as well as the shape parameters of horizontal corners of 
the spine. We see that each vertex of a region of $K$ gives rise to $3$ 
corners in the corresponding big region. For each region $R_i$ of $K$, we 
get a big region equation of the form 
\begin{equation}
\label{eq:bigregion}
\underset{v \ \textrm{corner of}\ R_i}{\prod} f(v)=1
\end{equation} 
where the corner factors $f(v)$ are prescribed by the rule shown in 
Figure~\ref{fig:cornerfactors}.

\begin{figure}[!htpb]
\centering
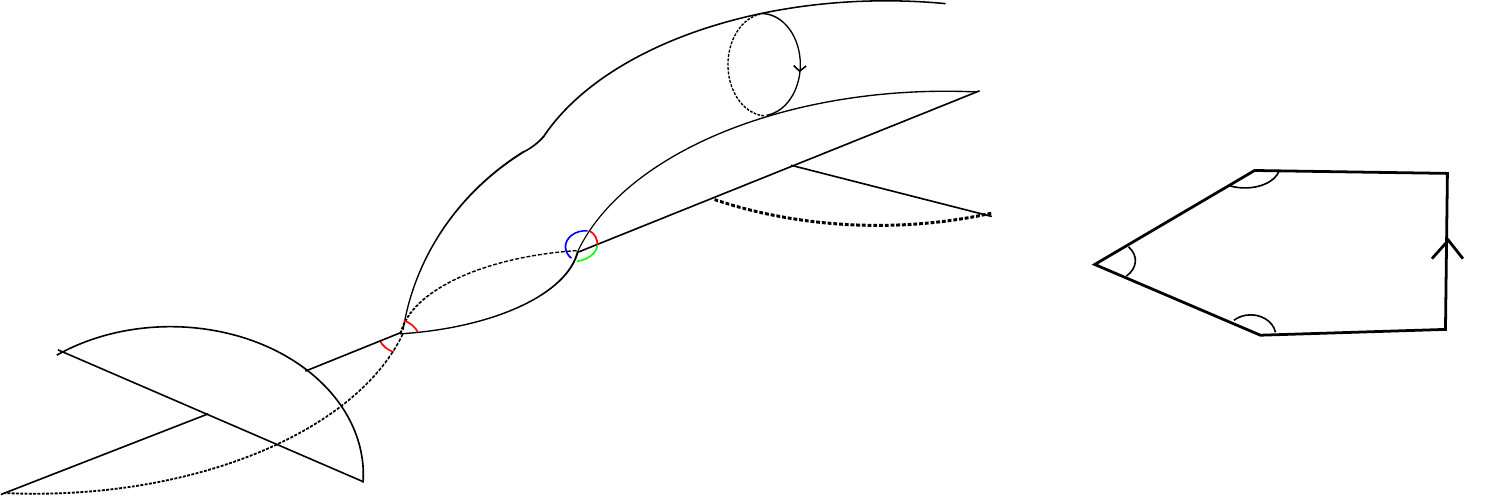
\caption{The meridian positioned on top of overpass $2$, and the left 
part of the region of the $5t$ spine that $m$ intersects.}
\label{fig:meridian}
\end{figure}

\begin{figure}[!htpb]
\centering
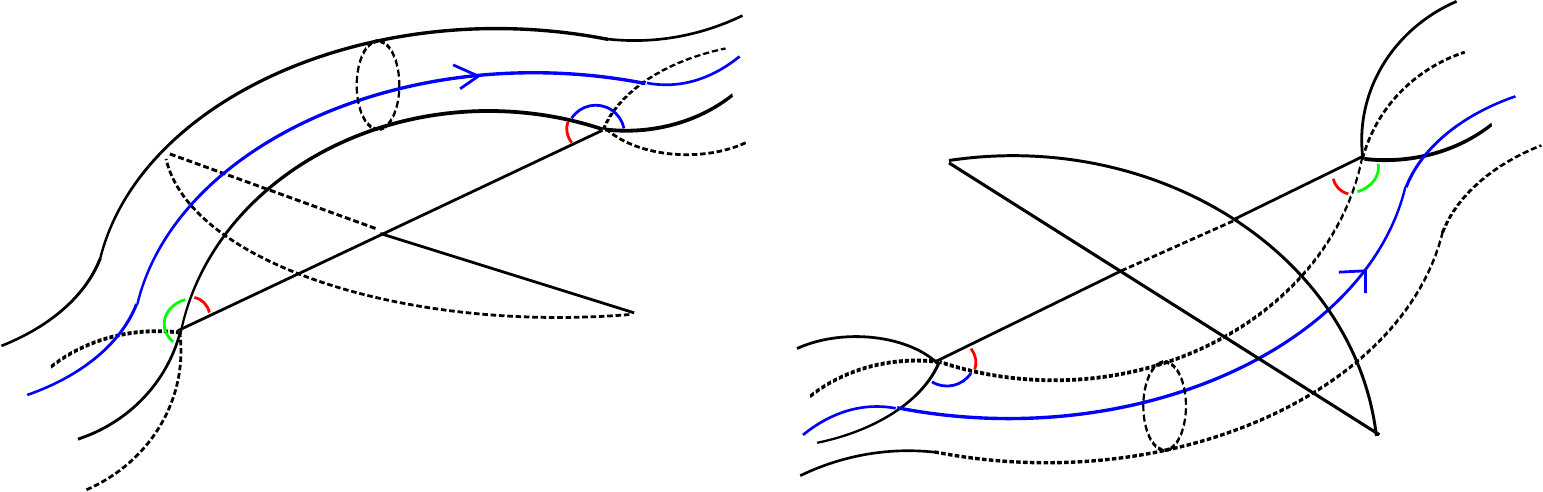
\caption{The longitude $\tilde{l}$ on the $5t$-spine, and the shape 
parameters to the left (resp. to the right) of it on overpasses (resp. 
underpasses).}
\label{fig:longitude}
\end{figure}

Below, we will denote the triangle, region and shingle equations by
$t_i$, $r_k$ and $s_j$ respectively. The above discussion defines 
the gluing equations variety $\calG_D$ as an affine subvariety of
$(\C^{**})^{5c(D)}$ defined by 

\be
\label{eq.calGD}
\calG_D=
\lbrace (w_c,z_{c,ui},z_{c,uo},z_{c,li},z_{c,lo})_{c \in c(D)}
\in (\C^{**})^{5c(D)} \ | \ t_i=1, s_j=1, r_k =1 \rbrace \,.
\ee

We now express the holonomies $w_\mu=h_{\mu}$ and 
$w_\lambda=h_{\lambda}$ of the meridian 
$\mu$ and zero winding number longitude $\lambda$ in terms of the above shape 
parameters. Note that if $K$ is not the unknot, it is always possible to 
find in the diagram of $K$ an underpass that is followed by an overpass 
that corresponds to a different crossing of $K$. We then name those two 
crossings $1$ and $2$. Assume that the meridian is positioned as shown in 
Figure~\ref{fig:meridian}. Then the rule described in 
Section~\ref{sec:gluingeq-spines} gives us the following holonomy:
$$
h_{\mu}=-z_{1,lo}z_{2,ui}'z_{2,ui}''.
$$
As $z_{2,ui}z_{2,ui}'z_{2,ui}''=-1$,  we get:
\begin{equation}
\label{eq:holonomy_meridian}
w_\mu=h_{\mu}=\frac{z_{1,lo}}{z_{2,ui}}.
\end{equation}

Finally, we turn to the holonomy of a longitude. We first compute the 
holonomy of the longitude $\tilde{l}$ corresponding to the blackboard 
framing of the knot. We can represent this longitude on the diagram $D$ 
as a right parallel of $D$. We draw this longitude on the spine in 
Figure~\ref{fig:longitude}, we can see that it intersects each upper or 
lower shingle region in one segment.

We compute the holonomy of each segment in an upper shingle using the 
convention 
$$
h_{a}=-\underset{\textrm{c left corner}}{\prod} z_c
$$
and each lower shingle segment using the convention 
$$
h_{a}=-\underset{\textrm{c right corner}}{\prod} z_c^{-1} \,.
$$
We can actually ignore the $-1$ signs as there are $2c(D)$ segments, an 
even number.

As Figure~\ref{fig:longitude} shows, we get:
$$
h_{\tilde{\lambda}}=\underset{\textrm{overarc} \ a}{\prod}\ 
\underset{\textrm{overpasses} \in a}{\prod} z_{uo}'' z_{ui}' 
\underset{\textrm{underarc} \ a}{\prod}\ \underset{\textrm{underpasses} 
\in a}{\prod} \frac{1}{z_{lo}' z_{li}''}=\underset{X(D)}{\prod} 
\frac{z_{uo}''z_{ui}'}{z_{lo}'z_{li}''}.
$$
The last product is over the set $X(D)$ of crossings of $D$, and for 
simplicity we do not indicate the dependence of the variables on the
crossing $c \in X(D)$. 
Let $\lambda$ be the longitude with zero winding number with $K$. 
The winding number of the blackboard framing longitude $\tilde{\lambda}$ 
is the writhe $\mathrm{wr}(D)$ of the diagram $D$, which can be computed by 
$\mathrm{wr}(D)= c_+ -c_-$, where $c_+$ and $c_-$ are the number of 
positive and negative crossings of the diagram. We then have 
$\tilde{\lambda}=\lambda \mu^{wr(D)}$ and thus
\begin{equation}
\label{eq:hollongitude}
w_\lambda=h_{\lambda}=w_\mu^{-wr(D)}\underset{X(D)}{\prod} 
\frac{z_{uo}''z_{ui}'}{z_{lo}'z_{li}''} \,.
\end{equation}

\subsection{Labeled knot diagrams}
\label{sub.labelD}

In this section we introduce a labeling of the crossings in a knot diagram, 
closely related to the Dowker-Thistlethwaite notation of knots. 

Recall that $D$ is a planar diagram of an oriented knot $K$ and that
we have chosen two special crossings $1$ and $2$ that are successive in 
the diagram, such that such crossing $1$ corresponds to an underpass 
and crossing $2$ to an overpass. This choice determines a labeling of 
crossings of $D$ as follows.

Following the knot, we label the other crossings $3,4,\ldots.$ Note that 
as the knot passes through each crossing twice, each crossing $c$ of $D$ 
gets two 
labels $j<j'$. Exactly one of those two labels correspond to the overpass 
and the other one to the underpass. Arcs of the 
diagram join two successive over- or underpasses labeled $l$ and $l+1$ 
(or $2c(D)$ and $1$). We write $[l,l+1]$ for the arc joining crossings $l$ 
and $l+1$. 

This labeling is illustrated in Figure \ref{fig:crossinglabels} in the case 
of the Figure eight knot.

\begin{figure}[!htpb]
\centering
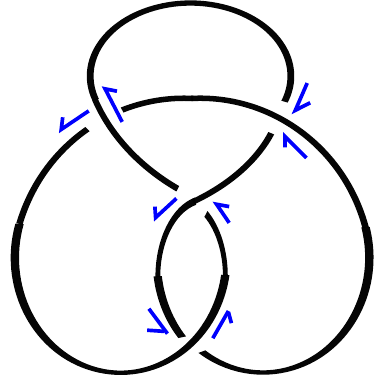
\caption{A labelling of the crossings of a Figure eight knot diagram. 
The $4$ distinct crossings of the diagram have labels $(1,6),(2,5),(3,8)$ 
and $(4,7)$.}
\label{fig:crossinglabels}
\end{figure}

\subsection{Analysis of triangle and shingle relations}
\label{sub.red1}

In this section, we show that the triangle and shingle equations allow us 
to eliminate variables in the gluing variety $\calG_D$. We have the following:

\begin{proposition}
\label{prop.elimination}
In $\calG_D$, each of the variables $w_c,z_{c,li},z_{c,lo},z_{c,ui},z_{c,uo}$ are 
monomials in the variables $w_c,w_{\mu}$ and $w_0=z_{1,lo}$.
\end{proposition}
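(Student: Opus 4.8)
The plan is to use only the triangle equations~\eqref{eq:uppertriangle} and the shingle equations of Lemma~\ref{lem.eshingle} (the big-region equations~\eqref{eq:bigregion} will not be needed), taking the $w_c$ together with $w_0=z_{1,lo}$ and $w_\mu$ as the target generators. First I would halve the problem using the triangle equations: from $w_c z_{c,ui}z_{c,uo}=1$ and $w_c z_{c,li}z_{c,lo}=1$ one reads off $z_{c,uo}=w_c^{-1}z_{c,ui}^{-1}$ and $z_{c,li}=w_c^{-1}z_{c,lo}^{-1}$, so it suffices to express the lower-out variables $z_{c,lo}$ and the upper-in variables $z_{c,ui}$ as monomials in the $w_c,w_0,w_\mu$. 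The two base points come essentially for free: $z_{1,lo}=w_0$ by definition, while the meridian holonomy~\eqref{eq:holonomy_meridian} gives $z_{2,ui}=z_{1,lo}w_\mu^{-1}=w_0 w_\mu^{-1}$.

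The heart of the argument is a transport step along the knot, which I expect to be the main obstacle, chiefly because of the bookkeeping involved. The underpasses of $D$ occur in a single cyclic order along $K$, consecutive ones joined by overarcs, each overarc having one underpass as its start and the next as its end. The first relation in~\eqref{eq:uppershingle} expresses the lower-out variable at the end of an overarc as the lower-out variable at its start times a monomial in the $w_j$ of the overpasses crossed in between. Since the underpass of crossing $1$ begins an overarc (it is immediately followed by the overpass of crossing $2$), I would chain these relations forward around $K$; because $K$ is connected, every underpass is visited exactly once before returning, so each $z_{c,lo}$ becomes $w_0$ times a monomial in the $w_j$. Running the symmetric argument with~\eqref{eq:lowershingle} along the underarcs, starting from the overpass of crossing $2$, expresses every $z_{c,ui}$ as $w_0 w_\mu^{-1}$ times a monomial in the $w_j$.

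Feeding these expressions back into the triangle relations recovers $z_{c,li}$ and $z_{c,uo}$, which finishes the claim. The one point requiring genuine care is that the indices $1,\dots,n$ appearing in the shingle equations are local to each arc: when an underpass (resp.\ overpass) serves simultaneously as the end of one arc and the start of the next, one must verify that it carries the same lower-out (resp.\ upper-in) variable in both relations, so that the chaining is consistent and the accumulated product of $w_j$'s is a single well-defined monomial.
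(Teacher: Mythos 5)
Your proposal is correct and follows essentially the same route as the paper's proof: the paper likewise halves the problem via the triangle equations, seeds the transport with $z_{1,lo}=w_0$ and $z_{2,ui}=w_0/w_\mu$ from the meridian holonomy~\eqref{eq:holonomy_meridian}, and chains the shingle equations around the knot by induction. The only cosmetic difference is that the paper packages the accumulated monomials as explicit arc parameters $z_{l,l+1}$, which is exactly the bookkeeping device that resolves the consistency point you flag at the end.
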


\begin{proof}
Fix a labeled knot diagram $D$ as in Section~\ref{sub.labelD}. 
Before eliminating variables, we start by 
assigning to each arc $[l,l+1]$ of the diagram a new parameter $z_{l,l+1}$.
These parameters are expressed in terms of the previous parameters by the 
following rules:
$$
z_{1,2}=z_{1,lo}=w_0 \ \textrm{and} \ z_{l,l+1}=z_{1,lo} 
\underset{j \in \llbracket 2,l\rrbracket \cap O(D)}{\prod}w_j 
\underset{j \in \llbracket 2,l\rrbracket \cap U(D)}{\prod}w_j^{-1}.
$$
We recall that in the above $O(D)$ (resp. $U(D)$) is the set of overpasses 
(resp. underpasses) in the diagram $D$. Also, given integers 
$a,b\in \Z$ with $a \leq b$, we denote 
$$
\llbracket a,b \rrbracket = \lbrace a,a+1,\ldots ,b \rbrace \,.
$$
Note that the arc parameters $z_{l,l+1}$ are all clearly monomials in $w_0$ 
and the $w_c$'s.

We claim that each of the shape parameters $z_{c,li},z_{c,lo},z_{c,ui},z_{c,uo}$ 
are monomials in the $z_{l,l+1}$'s and $w_{\mu}$. This will imply the 
proposition. Indeed, let $[k,k+1]$ be an arc of $K$. Then we claim that:
$$
z_{k,k+1}=\begin{cases} z_{k,lo} \ \textrm{if} \ k \ \textrm{is an underpass}
\\ \frac{1}{z_{k+1,li}} \ \textrm{if} \ k+1 \ \textrm{is an underpass}
\\ \frac{w_\mu}{z_{k,uo}} \ \textrm{if} \ k \ \textrm{is an overpass}
\\ \frac{z_{k+1,ui}}{w_\mu} \ \textrm{if} \ k+1 \ \textrm{is an overpass}
\end{cases}
$$

Note $z_{1,2}=z_{1,lo}$ by definition. If $k$ is an underpass, the formula 
$$
z_{k,k+1}=z_{1,lo} \underset{j \in \llbracket 2,k\rrbracket \cap O(D)}{\prod}w_j
\underset{j \in \llbracket 2,k \rrbracket \cap U(D)}{\prod}w_j^{-1}
$$
matches with the upper shingle equation expressing $z_{k,lo}$ in terms of 
$z_{1,lo}$. Indeed, if $k$ is the underpass coming immediately after 
underpass $1$, Equation~\eqref{eq:uppershingle} says:
$$
z_{k,lo}=z_{1,lo}w_k^{-1}\underset{j \in \llbracket 2,k-1\rrbracket}{\prod}w_j.
$$
As crossings $2,3,\ldots k-1$ correspond to overpasses and $k$ to an 
underpass, we also have
$$
z_{k,k+1}=z_{1,lo} w_k^{-1} \underset{j \in \llbracket 2,k-1\rrbracket}{\prod}w_j.
$$
By induction, we find that $z_{k,k+1}=z_{k,lo}$ for any underpass $k$.

The second case is then a consequence of the lower triangle equation 
$z_{k+1,li}=\frac{1}{w_{k+1}z_{k+1,lo}}$, and the fact that 
$z_{k,k+1}=z_{k+1,k+2}w_{k+1}$ as $k+1$ is an underpass.

Note that $z_{2,ui}=\frac{z_{1,lo}}{w_\mu}$ by 
Equation~\eqref{eq:holonomy_meridian}, so the fourth case is valid for the 
arc $[1,2]$. Similarly to case 1, we can prove case 4 for other arcs ending 
in an overpass from the lower shingle equations by induction.

Finally, the third case follows as $z_{k,uo}=\frac{1}{w_k z_{k,ui}}$, and 
$z_{k,k+1}=w_k z_{k-1,k}$.
\end{proof}

In the rest of the paper, we will often use the arc parameters $z_{k,k+1}$ 
defined above to express equations in $\calG_D.$

For instance, thanks to Proposition \ref{prop.elimination}, we can rewrite 
the big region 
equations $r_k=1$ as equations $r_k(w)=1$, where $r_k(w)$ is expressed in 
terms of the variables $w$ only.

\begin{remark}
\label{remark.arcparam}
Although the arc parameters $z_{l,l+1}$ are just monomials in the $w$ 
variables, they are helpful for writing down the equations defining 
$\calG_D$ in a more compact way. When the choice of a crossing $c$ is 
implicit, 
we introduce a simplified notation for the parameters associated to arcs 
neighboring $c$. We will write $z_a, z_b, z_{a'},z_{b'}$ for the parameters 
associated to the inward half of the overpass, inward half of underpass, 
outward half of underpass and outward half of underpass.

With this convention, at any crossing we have:
$$
z_{ui}=\frac{z_a}{w_\mu}, \ z_{li}=\frac{1}{z_b}, 
\ z_{uo}=\frac{w_\mu}{z_{a'}}, \ \textrm{and} \ z_{lo}=z_{b'}.
$$
\end{remark}

For instance, we get a new expression of the holonomy of the longitude:

\begin{proposition}
With the convention of Remark \ref{remark.arcparam}, the holonomy of the 
zero-winding number longitude is expressed by:
\begin{equation}
\label{eq:hollongitude2}w_\lambda=w_\mu^{-wr(D)}\underset{X(D)}{\prod} w 
\left(\frac{1-\frac{w_\mu}{z_{a'}}}{1-\frac{w_\mu}{z_a}}\right) 
\left(\frac{1-z_{b'}}{1-z_b}\right) \,.
\end{equation}
\end{proposition}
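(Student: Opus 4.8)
The plan is to start from the blackboard-framing holonomy already computed in \eqref{eq:hollongitude},
\[
w_\lambda = w_\mu^{-\mathrm{wr}(D)}\prod_{X(D)}\frac{z_{uo}''z_{ui}'}{z_{lo}'z_{li}''},
\]
and to rewrite the crossing factor $\frac{z_{uo}''z_{ui}'}{z_{lo}'z_{li}''}$ purely in terms of the arc parameters $z_a,z_b,z_{a'},z_{b'}$ of Remark~\ref{remark.arcparam}. The only inputs needed are the triple relations $z'=\frac{1}{1-z}$ and $z''=1-\frac1z$ from \eqref{eq.triplez}, the substitutions $z_{ui}=\frac{z_a}{w_\mu}$, $z_{li}=\frac1{z_b}$, $z_{uo}=\frac{w_\mu}{z_{a'}}$, $z_{lo}=z_{b'}$, and one monomial identity between neighbouring arc parameters coming from Proposition~\ref{prop.elimination}.

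First I would treat the ``underpass'' factor $\frac{1}{z_{lo}'z_{li}''}$. Since $\frac1{z'}=1-z$, we get $\frac1{z_{lo}'}=1-z_{lo}=1-z_{b'}$, while $z''=1-\frac1z$ gives $z_{li}''=1-\frac1{z_{li}}=1-z_b$; hence this factor equals exactly $\frac{1-z_{b'}}{1-z_b}$, which is the second bracket of \eqref{eq:hollongitude2}. Next I would treat the ``overpass'' factor $z_{uo}''z_{ui}'$: from $z_{uo}''=1-\frac{z_{a'}}{w_\mu}=\frac{w_\mu-z_{a'}}{w_\mu}$ and $z_{ui}'=\frac{1}{1-z_a/w_\mu}=\frac{w_\mu}{w_\mu-z_a}$ the factor of $w_\mu$ cancels and one obtains $z_{uo}''z_{ui}'=\frac{w_\mu-z_{a'}}{w_\mu-z_a}$.

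The one genuine point is producing the explicit factor $w$ and the specific shape $\frac{1-w_\mu/z_{a'}}{1-w_\mu/z_a}$ out of $\frac{w_\mu-z_{a'}}{w_\mu-z_a}$. For this I would invoke the arc-parameter recursion established in the proof of Proposition~\ref{prop.elimination}: at a crossing traversed as an overpass, the outward arc parameter equals $w$ times the inward one, i.e. $z_{a'}=w\,z_a$. Substituting this into $w\cdot\frac{1-w_\mu/z_{a'}}{1-w_\mu/z_a}$ and clearing the inner fractions gives $\frac{wz_a-w_\mu}{z_a-w_\mu}=\frac{z_{a'}-w_\mu}{z_a-w_\mu}=\frac{w_\mu-z_{a'}}{w_\mu-z_a}$, which is precisely the overpass factor computed above. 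Assembling the two brackets under the product over $X(D)$ and keeping the prefactor $w_\mu^{-\mathrm{wr}(D)}$ then yields \eqref{eq:hollongitude2}. I expect no real obstacle: the argument is a bookkeeping simplification, the only subtlety being to apply $z_{a'}=w\,z_a$ in the correct direction so that exactly one factor of $w$ per crossing is created.
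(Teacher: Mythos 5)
Your proposal is correct and follows essentially the same route as the paper: start from \eqref{eq:hollongitude}, substitute the arc parameters of Remark~\ref{remark.arcparam} via the triple relations to get the two brackets, and use the identity $z_{a'}/z_a=w$ at each crossing (which the paper also invokes, stated explicitly in the proof of Proposition~\ref{prop:loopeq}) to produce the factor $w$. The only cosmetic difference is that you verify the target expression by substituting $z_{a'}=w\,z_a$, whereas the paper factors $\frac{z_{a'}}{z_a}$ out of $\frac{1-z_{a'}/w_\mu}{1-z_a/w_\mu}$ directly; the content is identical.
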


\begin{proof}
By Equation~\eqref{eq:hollongitude} we have:
\begin{eqnarray*} 
&w_\lambda=w_\mu^{-wr(D)}\underset{X(D)}{\prod} 
\frac{z_{uo}''z_{ui}'}{z_{lo}'z_{li}''}
\\ &=w_\mu^{-wr(D)}\underset{X(D)}{\prod} \left(\frac{1-\frac{z_{a'}}{w_\mu}}{1
-\frac{z_a}{w_\mu}}\right) \left( \frac{1-z_{b'}}{1-z_b}\right)
\\ &=w_\mu^{-wr(D)}\underset{X(D)}{\prod}\frac{z_{a'}}{z_a} 
\left(\frac{1-\frac{w_\mu}{z_{a'}}}{1-\frac{w_\mu}{z_a}}\right) 
\left( \frac{1-z_{b'}}{1-z_b}\right)
\\ &=w_\mu^{-wr(D)}\underset{X(D)}{\prod}w 
\left(\frac{1-\frac{w_\mu}{z_{a'}}}{1-\frac{w_\mu}{z_a}}\right) 
\left( \frac{1-z_{b'}}{1-z_b}\right) \,.
\end{eqnarray*} 
\end{proof}

\subsection{Analysis of big region equations}
\label{sub.loop}

Recall that the big region equations are parametrized by the
regions of the planar diagram $D$, i.e., by the connected components of
$S^2\setminus D$. In this section, we give an alternative set of equations 
which are parametrized by the crossings of $D$, and we call those the
loop equations. 

Our motivation comes from the fact that we will later match
the loop equations with equations that come from a state sum formula for the
colored Jones polynomial.

Consider a crossing $c$ in the labeled diagram $D$. Recall from 
Section~\ref{sub.labelD} that
$c$ has two labels $j<j'$. The arc $[j,j']$ starts and ends at the same 
crossing, hence one may close it up to obtain a loop $\gamma_c$.
For a region $R_i$ of the diagram, let us pick a point $p_i$ in the 
interior of $R_i$. We write $w(\gamma_c,p_i)$ for the winding number of 
$\gamma$ relative to the point $p_i$. The big region equation corresponding 
to the region $R_i$ is $r_i=1$, where $r_i$ is the product of corners 
factors, see Equation~\eqref{eq:bigregion} and Figure~\ref{fig:cornerfactors}. 
The loop equation $L_c=1$ is then defined by
\begin{equation}
\label{eq:loopequation}
L_c=\underset{R_i \ \textrm{region}}{\prod} r_i^{w(\gamma_c,p_i)} \,.
\end{equation}
We also introduce
\be
\label{eq.L0}
L_0=\underset{R_i \ \textrm{region of } D}{\prod} r_i^{w(K,p_i)} \,.
\ee

\begin{proposition}
\label{prop:basis} 
The set of equations $L_0=1$, $L_{c}=1$ for all $c \in X(D)$ is equivalent 
to the set of equations $r_i=1$ for all region $R_i$ of $D$. 
\end{proposition}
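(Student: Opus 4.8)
The plan is to prove the two inclusions separately, the forward one being purely formal and the reverse one carrying all of the content. Since each $L_c$ and $L_0$ is, by \eqref{eq:loopequation} and \eqref{eq.L0}, a product of integer powers of the big region relations $r_i$, the system $\{r_i=1\}$ immediately implies $\{L_c=1,\,L_0=1\}$. For the converse I would work in the free abelian group $\Z^{\mathcal R}$ on the set $\mathcal R$ of regions of $D$ (of cardinality $c(D)+2$), writing $e_i$ for the generator of $R_i$ and encoding each relation by its exponent vector: $L_c\leftrightarrow v_c=\sum_i w(\gamma_c,p_i)\,e_i$, $L_0\leftrightarrow v_0=\sum_i w(K,p_i)\,e_i$, and $r_i\leftrightarrow e_i$. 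It then suffices to show that each $e_i$ lies in the subgroup $\Lambda$ generated by $\{v_c\}_{c\in X(D)}$, by $v_0$, and by $\mathbf 1:=\sum_i e_i$, since writing $e_i=\sum_c a_c v_c+a_0 v_0+b\,\mathbf 1$ translates into $r_i=\prod_c L_c^{a_c}\,L_0^{a_0}\,(\prod_j r_j)^{b}$, which equals $1$ once we know both that the loop relations hold and that $\prod_j r_j=1$.

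First I would establish the identity $\prod_{R_i\in\mathcal R} r_i=1$ using the relations already in force. Each corner factor of Figure~\ref{fig:cornerfactors} belongs to exactly one big region, so this product rearranges into the product, over all crossings, of the four corner factors attached to a single crossing. At a positive crossing this is $(w'w'')^2\,(z_{ui}'z_{ui}'')(z_{uo}'z_{uo}'')(z_{li}'z_{li}'')(z_{lo}'z_{lo}'')$; applying $z'z''=-1/z$ (from \eqref{eq.triplez}) and then the triangle relations $z_{ui}z_{uo}=z_{li}z_{lo}=1/w$ of \eqref{eq:uppertriangle} collapses it to $1$, and symmetrically at a negative crossing. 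This is exactly what supplies the extra vector $\mathbf 1$ and explains why only $c(D)+1$ loop relations can replace the $c(D)+2$ region relations.

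The core of the argument is to show that $\{v_c\}$, $v_0$ and $\mathbf 1$ generate $\Z^{\mathcal R}$, equivalently that $[\gamma_c]$ and $[K]$ generate $H_1(S^2\setminus P;\Z)$, where $P=\{p_i\}$ has one point in each region. With $P$ chosen this way, $S^2\setminus P$ deformation retracts onto $D$ viewed as a connected $4$-valent graph, so $H_1(S^2\setminus P)\cong H_1(D;\Z)$ is free of rank $c(D)+1$, and under the isomorphism $H_1(S^2\setminus P)\cong \Z^{\mathcal R}/\Z\mathbf 1$ the class $[\gamma]$ corresponds to $(w(\gamma,p_i))_i$. To prove generation I would pair against an arbitrary $\varphi\in\mathrm{Hom}(H_1(D),A)$, for any abelian group $A$, represented by values $g_l\in A$ on the arcs $\alpha_l=[l,l+1]$ ordered along $K$. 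Setting $H_l=\sum_{m<l}g_m$, the hypothesis $\varphi([K])=0$ forces $H$ to be periodic of period $2c(D)$ (handling the wrap-around arc), while for each crossing $c$ with labels $j<j'$ the hypothesis $\varphi([\gamma_c])=\sum_{l=j}^{j'-1}g_l=H_{j'}-H_j=0$ forces $H$ to agree at the two passes of that crossing. Hence $H$ descends to a function $h$ on vertices with $g_l=h(v_{l+1})-h(v_l)$, so $g$ is a coboundary and $\varphi=0$ in $H^1(D;A)$. As this holds for every $A$, the quotient $H_1(D)/\langle[\gamma_c],[K]\rangle$ admits no nonzero homomorphism to any abelian group, hence vanishes, giving the generation and therefore $e_i\in\Lambda$ for all $i$.

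The step I expect to be the genuine obstacle is precisely this generation statement. The naive attempt to use only the differences $e_i-e_j$ of adjacent regions together with $\mathbf 1$ fails, since those span a sublattice of index $c(D)+2$; one must really use the exact combinatorics of which arcs each loop $\gamma_c$ traverses. The telescoping/coboundary reformulation above is what makes this work cleanly: the two passes of a crossing are exactly the positions at which $\gamma_c$ opens and closes, which is the condition needed for the position-indexed potential $H$ to be well defined on the vertices, and the role of $[K]$ is exactly to close up the single wrap-around arc.
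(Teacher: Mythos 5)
Your proof is correct, and while it shares the paper's overall architecture --- both reduce the nontrivial direction to showing that $[K]$ and the loops $[\gamma_c]$ generate $H_1(D,\Z)\cong\Z^{c(D)+1}$, so that each $\partial R_i$ and hence each $r_i$ is a monomial in $L_0$ and the $L_c$'s --- it differs in two genuine ways. For the generation step the paper works in $C_1(D,\Z)$ with the arc basis ordered along the knot and checks that $K$ and the $\gamma_c$ form a triangular system ($K$ alone has a nonzero coordinate on $[2c(D),1]$, and the first nonzero coordinate of $\gamma_c$ is the arc at its smaller label), then upgrades from $\QQ$-basis to $\Z$-basis; your argument is the dual one, showing that any homomorphism $H_1(D)\to A$ killing $[K]$ and the $[\gamma_c]$ admits a vertex potential and is therefore a coboundary, so the quotient by these classes has no nonzero homomorphism to any abelian group and vanishes. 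The content is essentially the same (your partial sums $H_l$ integrate along the same ordered arc basis), but your version avoids the $\QQ$-to-$\Z$ upgrade. More substantively, you explicitly prove and use the identity $\prod_i r_i=1$ (via $zz'z''=-1$ and the triangle equations~\eqref{eq:uppertriangle}), which is what makes $L_\delta$ well defined on $H_1(D,\Z)=\Z^{\mathcal{R}}/\Z\mathbf{1}$ rather than only on winding-number vectors relative to a chosen point at infinity; without it, $L_{\partial R_i}=r_i$ fails for the region containing that point. The paper's proof of Proposition~\ref{prop:basis} glosses over this normalization (asserting $w(\partial R_i,p_j)=\delta_{ij}$ for every region), although the required identity is verified in passing in the proof of Proposition~\ref{prop:loopeq}; your treatment makes the (harmless, since the triangle equations are part of the definition of $\calG_D$) dependence on those equations explicit.
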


\begin{proof}
The equations $L_0=1$, $L_c=1$ are clearly implied by 
the big region equations $r_i=1$ as the $L_c$'s and $L_0$ are monomials 
in the $r_i$'s. We will show that the $r_i$'s are also monomials in $L_0$ 
and the $L_c$'s, and thus equations $r_i=1$ are a consequence of 
loop equations.

Let us consider the diagram $D$ as an oriented $4$-valent graph embedded 
in $S^2$. For any $\delta\in H_1(D,\Z)$, we can also introduce 
a loop equation
$$
L_{\delta}=\underset{R_i \ \textrm{region}}{\prod} r_i^{w(\delta,p_i)}.
$$
Note that $\delta \rightarrow L_{\delta}$ is a morphism of group 
$H_1(D,\Z) \rightarrow \C^*$ and that the equation $r_i$ can be presented 
in this form too: 

Indeed, chose $\delta=\partial R_i$ with positive orientation. Then 
$w(\delta,p_j)=0$ if $j\neq i$, and $w(\delta,p_i)=1$, hence $L_{\delta}=r_i$.

Thus we only need to prove that $H_1(D,\Z)$ is generated by $K$ and the 
classes $\gamma_c$. The diagram $D$ has $c(D)$ vertices and 
$2c(D)$ edges, and thus $H_1(D,\Z)=\Z^{c(D)+1}$. So we need to show that $K$ 
and the loops $\gamma_c$ are a $\Z$-basis of $H_1(D,\Z)$. To do this 
we first show that they are linearly independent in the space of $1$-chains 
$C_1(D,\Z)$.

Recall that we fixed a labeling of overpasses and underpasses in 
$[ 1, 2c(D) ]$ following the knot $K$. Note that the arcs 
$[1,2], [2,3], \ldots [2c(D),1]$ give a basis of $C_1(D,\Z)$. We order this 
basis with the convention $[1,2]<[2,3]<\ldots <[2c(D),1]$.

Then $K=[1,2]+[2,3]+\ldots +[2c(D),1]$ in $C_1(D,\Z)$, and if a crossing $c$ 
has labels $j<j'$, then $\gamma_c=[j,j+1]+\ldots +[j'-1,j']$.

We see that $K$ is not in the space generated by the $\gamma_c$ as it 
is the only one with non-zero coordinate along $[2c(D),1]$.

Moreover, the loops $\gamma_c$ are linearly independent as the indices 
of their first non-zero coordinates are all different.

So $K$ and the $\gamma_c$ are linearly independent in $H_1(D,\Z)$, and 
thus a $\QQ$-basis of $H_1(D,\QQ)$. We can actually show that they form a 
$\Z$-basis of $H_1(D,\Z)$. Indeed if $\delta \in H_1(D,\Z)$, we can 
subtract a $\Z$-linear combination of $K$ and the $\gamma_c$'s to 
$\delta$ to obtain an element with $0$ coordinate on $[2c(D),1]$ and each 
$[j,j+1]$ for each crossing with labels $j<j'$. This element has then to 
be zero as $(K,\gamma_c)$ is a $\QQ$-basis of $H_1(D,\QQ)$. 

Thus $K$ and the $\gamma_c$'s  generate $H_1(D,\Z)$, and the $r_i$'s 
are monomials in the $L_0,L_c$. 
\end{proof}

\subsection{Formulas for the loop equations}
\label{sub.floops}

In this section, we simplify the equations $L_0,L_c$ which we defined as 
monomials in the big region equations. Our goal is to express those 
equations in terms of the arc parameters $z_{k,k+1}$ introduced in 
Section~\ref{sub.red1}, which we recall are monomials in the $w$ variables.

\begin{proposition}
\label{prop:loopeq}
Let $c$ be a crossing of $D$ with labels $j<j'$.
For $k\in [ j , j' ]$, let $\varepsilon(k)=1$ if $k$ 
corresponds to a positive crossing and $\varepsilon(k)=-1$ otherwise. Let 
also $u_+(k)=\frac{1+\varepsilon(k)}{2}$ and 
$u_-(k)=\frac{1-\varepsilon(k)}{2}$. Then we have:
\begin{align}
\label{eq:loopequation2}
L_c &=K_c \underset{k \in \llbracket j+1,j'-1 \rrbracket \cap O(D)}{\prod}
\left(\frac{z_b^{u_-(k)}}{z_{b'}^{u_+(k)}}\right)
\left( \frac{1-\frac{w_\mu}{z_{a'}}}{1-\frac{w_\mu}{z_a}}\right) 
\\ \notag & \quad
\times \underset{k \in \llbracket j+1,j'-1 \rrbracket \cap 
U(D)}{\prod}w_\mu^{\varepsilon (k)}\left(\frac{z_a^{u_-(k)}}{z_{a'}^{u_+(k)}}\right)
\left(\frac{1-z_b}{1-z_{b'}}\right),
\end{align}
where in the above we set 
$$
K_c= \begin{cases}
\left(\frac{1}{z_{b_c'}}\right)
\frac{\left(1-\frac{w_\mu}{z_{a_c'}}\right)\left(1-z_{b_c}\right)}{(1-w_c)}
& \text{if $j$ is an overpass and $\varepsilon (c)=+1$}, \\
\left(-\frac{z_{a_c'}}{w_\mu}\right)
\frac{\left(1-\frac{w_\mu}{z_{a_c'}}\right)(1-z_{b_c})}{(1-w_c)}
& \text{if $j$ is an overpass and $\varepsilon (c)=-1$},
\\
\left(\frac{w_\mu}{z_{a_c'}}\right)\frac{(1-w_c)}{
\left(1-\frac{w_\mu}{z_{a_c}}\right)\left(1-z_{b_c'}\right)}
& \text{if $j$ is an underpass and $\varepsilon (c)=+1$},
\\
(-z_{b_c'})\frac{(1-w_c)}{\left(1-\frac{w_\mu}{z_{a_c}}\right)(1-z_{b_c'})}
& \text{if $j$ is an underpass and $\varepsilon (c)=-1$}.
\end{cases}
$$
\end{proposition}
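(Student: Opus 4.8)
The plan is to compute $L_c=\prod_i r_i^{w(\gamma_c,p_i)}$ directly by a telescoping (discrete Stokes) argument. Expanding each big region equation $r_i=\prod_{v\in\mathrm{corners}(R_i)}f(v)$ from~\eqref{eq:bigregion} and regrouping, every corner belongs to a unique region, so $L_c=\prod_{v}f(v)^{w(\gamma_c,R(v))}$, where $v$ ranges over all quadrant-corners of $D$ and $R(v)$ is the region carrying $v$. The engine of the proof is the identity that, at any crossing, the product of the four corner factors of the surrounding quadrants equals $1$: collecting the $w',w''$ and the $z',z''$ and using $zz'z''=-1$ together with the triangle equations~\eqref{eq:uppertriangle} gives $(w'w'')^2(z_{ui}z_{uo}z_{li}z_{lo})^{-1}=1$.

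First I would use this identity to localize $L_c$. Since $w(\gamma_c,-)$ is constant across an edge not belonging to $\gamma_c$ and jumps by $\pm1$ across an edge of $\gamma_c$, the four quadrants at a crossing not met by $\gamma_c$ share a common winding number $N$ and contribute $1^N=1$. Recalling from the proof of Proposition~\ref{prop:basis} that $\gamma_c=[j,j+1]+\cdots+[j'-1,j']$, the crossings actually met by $\gamma_c$ are exactly the special crossing $c$ (through its two passes $j,j'$) and the crossings carrying an intermediate pass $k\in\llbracket j+1,j'-1\rrbracket$. Hence $L_c$ factors as a product of local contributions over these crossings, matching the shape of~\eqref{eq:loopequation2}.

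Next I would compute each local contribution. At an intermediate crossing met by a single pass, $\gamma_c$ runs straight through along one strand (both of its half-arcs lie in $\gamma_c$ when $k\in\llbracket j+1,j'-1\rrbracket$), splitting the four quadrants $2$--$2$ with winding numbers $N$ and $N+1$; by the four-factor identity the contribution reduces to the product of the two corner factors on the incremented side. Reading this pair off Figure~\ref{fig:cornerfactors} and rewriting via Remark~\ref{remark.arcparam} (so that $z_{ui}=z_a/w_\mu$, $z_{li}=1/z_b$, $z_{uo}=w_\mu/z_{a'}$, $z_{lo}=z_{b'}$), one identifies the overpass contribution with $(z_b^{u_-(k)}/z_{b'}^{u_+(k)})(1-w_\mu/z_{a'})/(1-w_\mu/z_a)$ and the underpass contribution with $w_\mu^{\varepsilon(k)}(z_a^{u_-(k)}/z_{a'}^{u_+(k)})(1-z_b)/(1-z_{b'})$, the cases $\varepsilon(k)=\pm1$ distinguishing the positive and negative corner-factor tables. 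The relation $z_{a'}z_{b'}=z_az_b$ (equivalently $z_{a'}=z_aw_c$, $z_{b'}=z_b/w_c$) guarantees that the two admissible forms of a contribution --- a pair of corner factors, or the inverse of the complementary pair --- agree. A crossing carrying two intermediate passes is traversed twice, once along each strand; by additivity of winding numbers its contribution is the product of the corresponding overpass and underpass factors, consistent with the products in~\eqref{eq:loopequation2} being indexed by passes rather than crossings.

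Finally, the prefactor $K_c$ comes from the special crossing $c$, where $\gamma_c$ does not run straight through but turns: it uses the out-half of pass $j$ and the in-half of pass $j'$, which lie on different strands and hence on adjacent half-edges, so the four quadrants split $1$--$3$ with winding numbers differing by one. By the four-factor identity the contribution equals a single corner factor (when the isolated quadrant is incremented) or the inverse of a single corner factor, i.e.\ the product of the remaining three; a direct check in arc parameters confirms $K_c=w''z_{ui}'z_{lo}'$ in the case ``$j$ underpass, $\varepsilon(c)=+1$'' and $K_c=(w''z_{uo}'z_{li}')^{-1}$ in the case ``$j$ overpass, $\varepsilon(c)=+1$'', with the negative-crossing cases producing the displayed $-z_{a'}/w_\mu$ and $-z_{b'}$ signs through the factors of $-1$ in $zz'z''=-1$ used during the rewriting. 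I expect the main obstacle to be precisely this bookkeeping: reading off from Figures~\ref{fig:cornerfactors} and~\ref{fig:longitude}, for each of the configurations (over/under pass, positive/negative crossing, and which label of $c$ is the smaller), which quadrants lie to the left of $\gamma_c$ and which side carries the incremented winding number. Once the orientations are fixed consistently, the remaining algebra is a routine simplification using $zz'z''=-1$, the triangle equations, and $z_{a'}=z_aw_c$, $z_{b'}=z_b/w_c$.
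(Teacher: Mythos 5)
Your proposal follows essentially the same route as the paper's proof: rewrite $L_c$ as a product of corner factors weighted by winding numbers, use the identity that the four corner factors at any crossing multiply to $1$ (via $zz'z''=-1$ and the triangle equations) to localize the contribution to crossings met by $\gamma_c$, reduce each intermediate pass to the pair of corner factors on the incremented (left) side, and treat the special crossing $c$ as a single corner factor or its inverse --- your sample checks $K_c=w''z_{ui}'z_{lo}'$ and $K_c=(w''z_{uo}'z_{li}')^{-1}$ agree with the paper's. The remaining case-by-case algebra you defer is exactly what the paper's proof carries out, so the argument is correct and complete in outline.
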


\begin{proof}

\begin{figure}[!htpb]
\centering
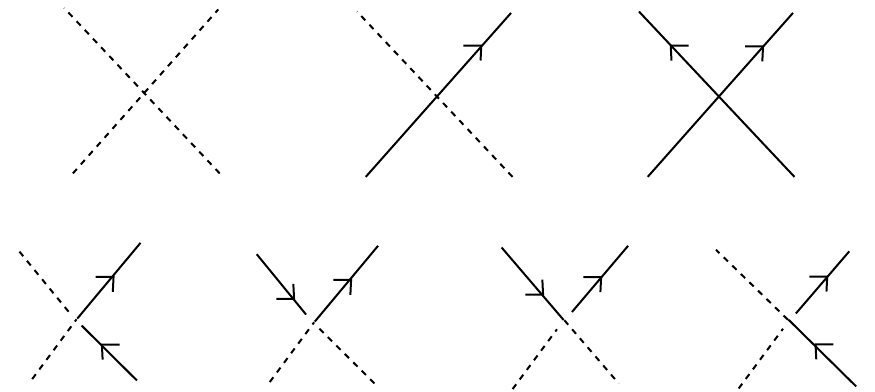
\caption{The local pattern of winding numbers near a crossing. Strands 
of $\gamma$ are represented by solid lines, strands of $\gamma'$ by 
dashed lines. The bottom row corresponds to the $4$ different possibilities 
for over-/underpass $j$: positive overpass, negative overpass, positive 
underpass,  or negative underpass.}
\label{fig:windingnumbers}
\end{figure}

We recall that $\gamma$ is the loop obtained from the arc $[j,j']$ of $D$ 
by gluing its two ends together. Let also $\gamma'$ be the complementary 
loop of $\gamma$, which is obtained from the arc $[j',j]$  by gluing the 
two ends. Note that $\gamma'$ goes through the underpass labeled $1$.  

As $L_c=\underset{R_i \ \textrm{region}}{\prod} r_i^{w(\gamma,p_i)}$ is a 
product of big region equations, and each big region factor is a product 
of corner factors, we can rewrite $L_c$ as a product of corner factors. 
Each corner $v$ of $D$ appears in one region $R_i$ only , and the winding 
number $w(\gamma,v)$ of $\gamma$ around $v$ is the same as $w(\gamma,p_i)$. 
Thus we may rewrite $L_c$ as
$$
L_c=\underset{v \ \textrm{corner of }D}{\prod} f(v)^{w(\gamma, v)},
$$ 
where the corner factors $f(v)$ are those of Figure~\ref{fig:cornerfactors}.

Figure~\ref{fig:windingnumbers} shows the local pattern of winding numbers 
of corners near a crossing of $D$, depending which neighboring arcs belong 
to $\gamma$ and $\gamma'$. First let us note for a crossing between two 
strands of $\gamma'$, all local winding numbers are equal, thus the 
crossing contributes by the product of all $4$ corners factors to some power. 
However, at any positive crossing, the product of corner factors is 
$$
(w'z_{uo}''z_{lo}'')(w''z_{uo}'z_{li}')(w'z_{ui}''z_{li}'')(w''z_{ui}'z_{lo}')
=\frac{1}{w^2 z_{ui}z_{uo}z_{li}z_{lo}}=1
$$
by the rule $z z' z''=-1$ and the triangle equations. Similarly, at any 
positive crossing, the product of corner factors is 
$$
(w''z_{uo}'z_{lo}')(w'z_{uo}''z_{li}'')(w''z_{ui}'z_{li}')(w'z_{ui}''z_{lo}'')
=\frac{1}{w^2 z_{ui}z_{uo}z_{li}z_{lo}}=1.
$$
So crossings between two strands of $\gamma'$ do not contribute to $L_c$.

Next we consider a crossing between one strand of $\gamma$ and one 
strand of $\gamma'$. By the local winding numbers shown in 
Figure~\ref{fig:windingnumbers} and that fact that the product of the 
$4$ corner factors at a crossing is $1$, such a crossing contributes by 
the product of the two corner factors to the left of $\gamma$. Similarly, 
for a crossing between two strands of $\gamma$, we get the product of the 
two corner factors to the left of the first strand times the two corner 
factors to the left of the other strand.

Hence, each overpass or underpass $l \in \llbracket j+1 , j'-1 \rrbracket$ 
of $\gamma$ contributes to one factor $K_l$ which is the product of the two 
left corner factors. By the rule described in Figure~\ref{fig:cornerfactors}, 
for a positive overpass we get 
\begin{align*}
K_l &=(w'z_{uo}''z_{lo}'')(w''z_{ui}'z_{lo}')=\frac{z_{ui}'z_{uo}''}{wz_{lo}}
=z_{li}z_{ui}'z_{uo}''
\\ &=
\frac{1}{z_b}\left(\frac{1-\frac{z_{a'}}{w_\mu}}{1-\frac{z_a}{w_\mu}}\right)
= \frac{z_{a'}}{z_a z_b}\left(\frac{1-\frac{w_\mu}{z_{a'}}}{1-\frac{w_\mu}{z_a}}
\right)
=\frac{1}{z_{b'}}\left(\frac{1-\frac{w_\mu}{z_{a'}}}{1-\frac{w_\mu}{z_a}}\right),
\end{align*}
where the last equality comes from the fact that, at any crossing, 
$\frac{z_{a'}}{z_a}=\frac{z_b}{z_{b'}}=w$. Similarly, at a negative overpass 
we get:
\begin{multline*}
K_l=(w'z_{uo}''z_{li}'')(w''z_{ui}'z_{li}')=\frac{z_{uo}''z_{ui}'}{wz_{li}}
=z_{lo}z_{uo}'z_{ui}''
\\ =z_{b'}\left( \frac{1-\frac{z_{a'}}{w_\mu}}{1-\frac{z_a}{w_\mu}}\right) 
=\frac{z_{b'}z_{a'}}{z_a}\left(\frac{1-\frac{w_\mu}{z_{a'}}}{
1-\frac{w_\mu}{z_a}}\right)=z_b \left(\frac{1-\frac{w_\mu}{z_{a'}}}{
1-\frac{w_\mu}{z_a}}\right).
\end{multline*}
At a positive underpass we get:
$$
K_l=(w''z_{ui}'z_{lo}')(w'z_{ui}''z_{li}'')=\frac{z_{lo}'z_{li}''}{wz_{ui}}
=z_{uo}z_{lo}'z_{li}''= \frac{w_\mu}{z_{a'}}\left(\frac{1-z_b}{1-z_{b'}}\right),
$$
and, finally, at a negative underpass we get:
$$
K_l=(w'z_{uo}''z_{li}'')(w''z_{uo}'z_{lo}')=\frac{z_{lo}'z_{li}''}{wz_{uo}}
=z_{ui}z_{lo}'z_{li}''= \frac{z_a}{w_\mu}\left(\frac{1-z_b}{1-z_{b'}}\right).
$$
All those overpass/underpass factors correspond to the ones in 
Equation~\eqref{eq:loopequation2}. Finally we turn to the contribution $K_c$ 
of crossing $c$. By the local pattern of winding numbers in 
Figure~\ref{fig:windingnumbers}, and the corner factors rule of 
Figure~\ref{fig:cornerfactors}, we have, if $j$ is a positive overpass:
$$
K_c=\frac{1}{w''z_{uo}'z_{li}'}=\frac{
(1-\frac{w_\mu}{z_{a'}})(1-\frac{1}{z_b})}{(1-\frac{1}{w})}=
\left(\frac{w}{z_b}\right)\frac{(1-\frac{w_\mu}{z_{a'}})(1-z_b)}{(1-w)}=
\left( \frac{1}{z_{b'}} \right) \frac{(1-\frac{w_\mu}{z_{a'}})(1-z_b)}{(1-w)}.
$$
If $j$ is a negative overpass, we have:
$$
K_c=w'z_{uo}''z_{li}''=\left(\frac{1}{1-w}\right) (1-\frac{z_{a'}}{w_\mu})(1-z_b)
=\left(-\frac{z_{a'}}{w_\mu}\right)\frac{(1-\frac{w_\mu}{z_{a'}})(1-z_b)}{(1-w)}.
$$
If $j$ is a positive underpass, then:
$$
K_c=w''z_{ui}'z_{lo}'=\frac{(1-\frac{1}{w})}{(1-\frac{z_a}{w_\mu})(1-z_{b'})}=
\left( \frac{w_\mu}{w z_a}\right) \frac{1-w}{(1-\frac{w_\mu}{z_a})(1-z_{b'})}=
\left( \frac{w_\mu}{z_{a'}}\right) \frac{1-w}{(1-\frac{w_\mu}{z_a})(1-z_{b'})}.
$$
Finally if $j$ is a negative underpass, then:
$$
K_c=\frac{1}{w'z_{ui}''z_{lo}''}=\frac{(1-w)}{
(1-\frac{w_\mu}{z_a})(1-\frac{1}{z_{b'}})}=(-z_{b'})\frac{(1-w)}{
(1-\frac{w_\mu}{z_a})(1-z_{b'})}.
$$
We clearly see that in each case the factor $K_c$ matches with that of 
Proposition~\ref{prop:loopeq}.
\end{proof}

We want to rearrange the loop equations slightly, grouping together the 
factors $w_\mu^{\varepsilon(k)}$ on the one side and the factors 
$\frac{z_b^{u_-(k)}}{z_{b'}^{u_+(k)}}$ and $\frac{z_{a}^{u_-(k)}}{z_{a'}^{u_+(k)}}$ 
on the other side. For the former we claim:

\begin{lemma}
\label{prop:writhe}
Let $c$ be a crossing of $D$ with labels $j<j'$, $\gamma$ the loop 
$[j,j']/_{j=j'}$, and $\gamma'$ the loop $[j',j]/_{j=j'}$. For $l\in (j,j')$ 
an over- or underpass, let $\varepsilon(l)$ be the sign of the corresponding 
crossing. Then we have
$$
\underset{l \in U(D) \cap (j,j')}{\sum} \varepsilon(l)=
\underset{l \in (j,j')}{\sum}\frac{\varepsilon(l)}{2}=
\mathrm{wr}(\gamma)+\mathrm{lk}(\gamma,\gamma').
$$
\end{lemma}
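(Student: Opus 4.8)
The plan is to interpret both sides as signed counts of crossings of $D$, organised according to the two loops $\gamma$ and $\gamma'$, and then to match these counts with $\mathrm{wr}(\gamma)$ and $\mathrm{lk}(\gamma,\gamma')$ via the standard diagrammatic formulas for writhe and linking number. Note first that $\gamma$ and $\gamma'$ together make up the knot $K$, that every crossing of $D$ other than $c$ has both of its labels different from $j$ and $j'$ (each label is used exactly once, and $c$ carries the labels $j,j'$), and that one may close up the arcs $[j,j']$ and $[j',j]$ by short arcs in a neighbourhood of $c$ chosen so that $c$ becomes a self-crossing of neither $\gamma$ nor $\gamma'$ and so that $\gamma$ and $\gamma'$ stay disjoint. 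Thus $c$, whose two labels are precisely the excluded endpoints $j,j'$, is absent from $\mathrm{wr}(\gamma)$, from $\mathrm{lk}(\gamma,\gamma')$, and from the set of passes counted by $(j,j')$.

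Next I would classify each crossing $x\neq c$ according to how its two labels meet the open interval $(j,j')$: both labels inside (then $x$ is a self-crossing of $\gamma$), exactly one label inside (then $x$ is a crossing between $\gamma$ and $\gamma'$), or neither (a self-crossing of $\gamma'$). Since the sign $\varepsilon(x)$ does not depend on which of the two strands one inspects, this classification gives
$$
\underset{l\in(j,j')}{\sum}\frac{\varepsilon(l)}{2}
=\underset{\text{self-crossings of }\gamma}{\sum}\varepsilon
+\frac{1}{2}\underset{\gamma\text{-}\gamma'\text{ crossings}}{\sum}\varepsilon,
$$
because a self-crossing of $\gamma$ contributes both of its passes (hence $2\cdot\tfrac{\varepsilon}{2}$), a $\gamma$-$\gamma'$ crossing contributes a single pass (hence $\tfrac{\varepsilon}{2}$), and self-crossings of $\gamma'$ contribute nothing. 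Recalling that $\mathrm{wr}(\gamma)$ is the signed sum over self-crossings of $\gamma$ and that $\mathrm{lk}(\gamma,\gamma')=\tfrac{1}{2}\sum\varepsilon$ over all $\gamma$-$\gamma'$ crossings, the right-hand side is exactly $\mathrm{wr}(\gamma)+\mathrm{lk}(\gamma,\gamma')$, which is the second equality.

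For the first equality I would run the same bookkeeping on $\sum_{l\in U(D)\cap(j,j')}\varepsilon(l)$. A self-crossing of $\gamma$ has a unique underpass, whose label lies in $(j,j')$, so these again contribute $\mathrm{wr}(\gamma)$. A $\gamma$-$\gamma'$ crossing contributes precisely when its one pass with label in $(j,j')$ — the strand belonging to $\gamma$ — is the underpass, i.e. exactly when $\gamma$ passes \emph{under} $\gamma'$. The essential input here is the classical refinement of the linking-number formula: $\mathrm{lk}(\gamma,\gamma')$ equals the signed count of crossings where $\gamma$ passes under $\gamma'$ (equivalently, by symmetry of $\mathrm{lk}$, those where $\gamma$ passes over). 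Hence the $\gamma$-$\gamma'$ crossings again contribute $\mathrm{lk}(\gamma,\gamma')$, so $\sum_{l\in U(D)\cap(j,j')}\varepsilon(l)=\mathrm{wr}(\gamma)+\mathrm{lk}(\gamma,\gamma')$ as well, matching the other two expressions.

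The only genuine obstacle is the treatment of the distinguished crossing $c$ together with the choice of closures: one must verify that the two closing arcs can be inserted in a neighbourhood of $c$ without creating new crossings and without turning $c$ into a self-crossing of $\gamma$ or $\gamma'$, which uses that the two edges of $c$ belonging to $\gamma$ are adjacent in the cyclic order of the four local edges (and likewise for $\gamma'$). Once this local picture is pinned down, both displayed computations collapse to the over/under formula for the linking number, and the two asserted equalities follow at once.
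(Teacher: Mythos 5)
Your proposal is correct and follows essentially the same route as the paper: classify the crossings met by $(j,j')$ into self-crossings of $\gamma$ (contributing $\mathrm{wr}(\gamma)$ to both sums, since both passes lie in the interval) and mixed $\gamma$--$\gamma'$ crossings (contributing $\mathrm{lk}(\gamma,\gamma')$ to both sums, via the two standard expressions for the linking number — half the signed count of all mixed crossings, or the signed count of those where $\gamma$ passes under). Your extra care about closing up the arcs near the distinguished crossing $c$ is a reasonable refinement the paper leaves implicit, but it does not change the argument.
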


\begin{remark}
By the above lemma, the factors $w_\mu^{\varepsilon(k)}$ in the product on the 
right of Equation~\eqref{eq:loopequation2} group up to one factor 
$w_\mu^{\mathrm{wr}(\gamma)+\mathrm{lk}(\gamma,\gamma')}$.
\end{remark}

\begin{proof}
The crossings of $D$ that are in $(j,j')$ are of two types: self-crossings 
of $\gamma$ and crossing between $\gamma$ and $\gamma'$. Self-crossings of 
$\gamma$ belong to both an overpass and an underpass $l\in (j,j')$, hence 
in both sums in the lemma, those crossings contribute to 
$c_+(\gamma)-c_-(\gamma)=\mathrm{wr}(\gamma)$. 

Moreover the linking number of $\gamma$ and $\gamma'$ can be computed 
in two ways as $\underset{l \in \gamma \cap \gamma'}{\sum} 
\frac{\varepsilon(l)}{2}$ or as $\underset{l \in \gamma 
\cap \gamma' \cap U(D)}{\sum} \varepsilon(l)$. Thus hence in both sums 
in the lemma mixed crossings contribute to $\mathrm{lk}(\gamma,\gamma')$.
\end{proof}

\begin{lemma}
\label{prop:conservation_of_color}
Let $c$ be a crossing of $D$ with labels $j<j'$. Then:
$$
\underset{k \in (j,j') \cap O(D)}{\prod}\frac{z_b^{u_-(k)}}{z_{b'}^{u_+(k)}} 
\underset{k \in (j,j') \cap U(D)}{\prod}\frac{z_{a}^{u_-(k)}}{z_{a'}^{u_+(k)}}
=C \underset{k \in (j,j') \cap O(D)}{\prod} 
(z_b z_{b'})^{-\frac{\varepsilon(k)}{2}}\underset{k \in (j,j') 
\cap U(D)}{\prod}(z_a z_{a'})^{-\frac{\varepsilon(k)}{2}},
$$
where $C=\left(\frac{z_{b_c}}{z_{a_c'}} \right)^{\frac{1}{2}}=
\left(\frac{z_{b_c'}}{z_{a_c}} \right)^{\frac{1}{2}}$ if $j$ is an overpass 
and $C=\left( \frac{z_{a_c}}{z_{b_c'}}\right)^{\frac{1}{2}}=
\left(\frac{z_{a_c'}}{z_{b_c}} \right)^{\frac{1}{2}}$.
\end{lemma}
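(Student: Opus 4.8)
The plan is to prove the identity by a factor-by-factor comparison over the over- and underpasses $k \in (j,j')$, after which a single telescoping product collapses to the constant $C$. The only local input needed is the relation $\frac{z_{a'}}{z_a} = \frac{z_b}{z_{b'}} = w$ holding at every crossing (already used in the proof of Proposition~\ref{prop:loopeq}), together with the fact that $u_\pm(k) \in \{0,1\}$ with $(u_+,u_-) = (1,0)$ when $\varepsilon(k)=+1$ and $(u_+,u_-) = (0,1)$ when $\varepsilon(k)=-1$.

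First I would establish, for each over- or underpass $k$ in the open interval $(j,j')$, with $z_a,z_{a'},z_b,z_{b'}$ the arc parameters of the crossing at which the pass $k$ occurs (as in Remark~\ref{remark.arcparam}), the elementary identities
\begin{align*}
\frac{z_b^{u_-(k)}}{z_{b'}^{u_+(k)}} &= w_k^{1/2}\,(z_b z_{b'})^{-\varepsilon(k)/2} \qquad (k \in O(D)), \\
\frac{z_a^{u_-(k)}}{z_{a'}^{u_+(k)}} &= w_k^{-1/2}\,(z_a z_{a'})^{-\varepsilon(k)/2} \qquad (k \in U(D)).
\end{align*}
Each is checked by separating the cases $\varepsilon(k) = \pm 1$ and substituting $w_k = z_b/z_{b'}$ (resp. $w_k = z_{a'}/z_a$); in every case the half-integer powers cancel and both sides collapse to one honest monomial ($z_{b'}^{-1}$ or $z_b$, resp. $z_{a'}^{-1}$ or $z_a$). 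Multiplying these identities over all $k \in (j,j')$ reproduces exactly the product on the right-hand side of the lemma, times the leftover prefactor
$$
P := \prod_{k \in (j,j') \cap O(D)} w_k^{1/2} \prod_{k \in (j,j') \cap U(D)} w_k^{-1/2},
$$
so the lemma reduces to the claim $P = C$.

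To evaluate $P$ I would invoke the arc-parameter recursion of Section~\ref{sub.red1}: setting $\sigma(k) = +1$ for $k \in O(D)$ and $\sigma(k) = -1$ for $k \in U(D)$, the defining formula for $z_{l,l+1}$ yields $z_{k,k+1}/z_{k-1,k} = w_k^{\sigma(k)}$, whence
$$
P = \prod_{k=j+1}^{j'-1} w_k^{\sigma(k)/2} = \left(\prod_{k=j+1}^{j'-1} \frac{z_{k,k+1}}{z_{k-1,k}}\right)^{1/2} = \left(\frac{z_{j'-1,j'}}{z_{j,j+1}}\right)^{1/2}
$$
after telescoping. It then remains only to read off the two surviving arc parameters at the crossing $c$. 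If $j$ is an overpass, then $j$ labels the overstrand and $j'$ the understrand of $c$, so $z_{j,j+1} = z_{a_c'}$ and $z_{j'-1,j'} = z_{b_c}$, giving $P = (z_{b_c}/z_{a_c'})^{1/2} = C$; if $j$ is an underpass the roles interchange and $z_{j,j+1} = z_{b_c'}$, $z_{j'-1,j'} = z_{a_c}$, giving $P = (z_{a_c}/z_{b_c'})^{1/2} = C$. The two listed expressions for $C$ coincide because $z_{a_c} z_{b_c} = z_{a_c'} z_{b_c'}$, once more by $z_{a'}/z_a = z_b/z_{b'} = w_c$.

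Since every step is a direct monomial computation, there is no genuine analytic or structural obstacle; the only real difficulty is bookkeeping. In particular one must carefully distinguish the crossing sign $\varepsilon(k)$ from the over/under indicator $\sigma(k)$, and match the inward/outward arcs at $c$ to the correct labels $j$ and $j'$. I would also remark that the half-integer exponents are purely formal: the paired contributions $w_k^{\pm 1/2}(z_b z_{b'})^{-\varepsilon(k)/2}$ and $w_k^{\mp 1/2}(z_a z_{a'})^{-\varepsilon(k)/2}$ above are honest Laurent monomials, and the degenerate case $j' = j+1$ (a kink at $c$) is handled automatically, since then $z_{j'-1,j'} = z_{j,j+1}$ forces $P = C = 1$.
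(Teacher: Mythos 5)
Your proposal is correct and follows essentially the same route as the paper: split each factor as a half-power of $w_k$ (equivalently of the outgoing-over-incoming arc ratio) times the symmetric part $(z_bz_{b'})^{-\varepsilon/2}$ or $(z_az_{a'})^{-\varepsilon/2}$, then telescope the arc-parameter ratios along $(j,j')$ to land on $\left(z_{j'-1,j'}/z_{j,j+1}\right)^{1/2}=C$. The bookkeeping (signs, over/under indicator, identification of $z_{j,j+1}$ and $z_{j'-1,j'}$ with the arc parameters at $c$) all checks out against the conventions of Remark~\ref{remark.arcparam}.
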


\begin{proof}
We have by definition of $u_+(k)$ and $u_-(k)$: 
$$
\frac{z_b^{u_-(k)}}{z_{b'}^{u_+(k)}}=\left(\frac{z_b}{z_{b'}} 
\right)^{\frac{1}{2}}(z_b z_b')^{-\frac{\varepsilon(k)}{2}}, \ \textrm{and} 
\ \frac{z_a^{u_-(k)}}{z_{a'}^{u_+(k)}}=\left(\frac{z_a}{z_{a'}} 
\right)^{\frac{1}{2}}(z_a z_a')^{-\frac{\varepsilon(k)}{2}}.
$$
Moreover, as at any crossing $\frac{z_b}{z_{b'}}=\frac{z_{a'}}{z_a}=w$, we have:
\begin{align*}
\underset{k \in (j,j') \cap O(D)}{\prod}\left(\frac{z_b}{z_{b'}} 
\right)^{\frac{1}{2}}\underset{k \in (j,j') \cap 
U(D)}{\prod}\left(\frac{z_a}{z_{a'}} \right)^{\frac{1}{2}} &=
\underset{k \in (j,j') \cap O(D)}{\prod}\left(\frac{z_{a'}}{z_a} 
\right)^{\frac{1}{2}}\underset{k \in (j,j') \cap U(D)}{\prod}
\left(\frac{z_{b'}}{z_b} \right)^{\frac{1}{2}}
\\ 
&=\underset{k \in (j,j')}{\prod}\left(\frac{z_{k,k+1}}{z_{k-1,k}}
\right)^{\frac{1}{2}}=\left(\frac{z_{j'-1,j'}}{z_{j,j+1}}\right)^{\frac{1}{2}}.
\end{align*}
Finally, if $j$ is an overpass then 
$\left(\frac{z_{j'-1,j'}}{z_{j,j+1}}\right)^{\frac{1}{2}}=
\left(\frac{z_{b_c}}{z_{a_c'}}\right)^{\frac{1}{2}}$ as $z_{a_c'}=z_{j,j+1}$ and 
$z_{b_c}=z_{j'-1,j'}$. Similarly, 
$\left(\frac{z_{j'-1,j'}}{z_{j,j+1}}\right)^{\frac{1}{2}}=
\left(\frac{z_{a_c}}{z_{b_c'}}\right)^{\frac{1}{2}}$ if $j$ is an underpass.
\end{proof}
From Proposition~\ref{prop:loopeq} together with Lemma~\ref{prop:writhe} 
and~\ref{prop:conservation_of_color}, we obtain another formula for the 
loop equation:

\begin{proposition}
\label{prop:loopeq2}
Let $c$ be a crossing of $D$ with labels $j<j'$ and let $L_c$ be the 
associated loop equation. If $k \in (j,j')$, let $\varepsilon(k)$ be 
the sign of the corresponding crossing. Then:
\begin{equation}
\label{eq:loopeq3}
L_c=K_c' \underset{k \in (j,j')  \cap O(D)}{
\prod}\left(\frac{w_\mu}{z_b z_{b'}}\right)^{\frac{\varepsilon(k)}{2}}
\left( \frac{1-\frac{w_\mu}{z_{a'}}}{1-\frac{w_\mu}{z_a}}\right) 
\\ \times \underset{k \in (j,j')\cap U(D)}{\prod}
\left(\frac{w_\mu}{z_a z_{a'}}\right)^{\frac{\varepsilon(k)}{2}}
\left(\frac{1-z_b}{1-z_{b'}}\right)
\end{equation}
where $K_c'$ is obtained from $K_c$ of Proposition~\ref{prop:loopeq} by 
replacing respectively a factor $\left(\frac{1}{z_{b_c'}}\right)$,  
$z_{a_c'}$, $\left( \frac{1}{z_{a_c'}}\right) $, or $z_{b_c'}$ by 
$\frac{1}{(z_{a_c}z_{b_c'})^{\frac{1}{2}}}$,  $(z_{a_c'}z_{b_c})^{\frac{1}{2}}$, 
$\frac{1}{(z_{a_c'}z_{b_c})^{\frac{1}{2}}} $, or $(z_{a_c} z_{b_c'})^{\frac{1}{2}}$ 
if $j$ is a positive overpass, a negative overpass, a positive underpass 
or a negative underpass.
\end{proposition}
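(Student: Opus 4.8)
The plan is to take the expression for $L_c$ furnished by Proposition~\ref{prop:loopeq} and reorganize its two genuinely monomial ingredients — the $w_\mu$-powers and the pure $z$-monomials $z_b^{u_-(k)}z_{b'}^{-u_+(k)}$, $z_a^{u_-(k)}z_{a'}^{-u_+(k)}$ — into the symmetric half-power shape of Equation~\eqref{eq:loopeq3}, allowing all leftover crossing-$c$ contributions to be absorbed into the modified prefactor $K_c'$. The two rational factors $\frac{1-w_\mu/z_{a'}}{1-w_\mu/z_a}$ and $\frac{1-z_b}{1-z_{b'}}$ occur verbatim in both formulas, so I would simply transport them unchanged and concentrate all the work on the monomial part.

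First I would isolate the product of $z$-monomials $\prod_{(j,j')\cap O(D)} z_b^{u_-(k)}z_{b'}^{-u_+(k)}\,\prod_{(j,j')\cap U(D)} z_a^{u_-(k)}z_{a'}^{-u_+(k)}$ appearing in \eqref{eq:loopequation2} and apply Lemma~\ref{prop:conservation_of_color} directly: it rewrites this product as $C$ times the symmetric product $\prod_{O}(z_b z_{b'})^{-\varepsilon(k)/2}\prod_{U}(z_a z_{a'})^{-\varepsilon(k)/2}$, where $C$ is the explicit half-monomial recorded in that lemma, depending only on whether $j$ is an over- or an underpass. Next I would treat the $w_\mu$-powers. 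In Proposition~\ref{prop:loopeq} these are attached to underpasses only, contributing $w_\mu^{\sum_{(j,j')\cap U(D)}\varepsilon(k)}$, whereas \eqref{eq:loopeq3} spreads a factor $w_\mu^{\varepsilon(k)/2}$ across every over- and underpass, contributing $w_\mu^{\frac12\sum_{(j,j')}\varepsilon(k)}$. Lemma~\ref{prop:writhe} identifies both exponents with $\mathrm{wr}(\gamma)+\mathrm{lk}(\gamma,\gamma')$, so the two prescriptions agree and I may freely pass from one to the other. Merging each $w_\mu^{\varepsilon(k)/2}$ with the corresponding $(z_b z_{b'})^{-\varepsilon(k)/2}$ or $(z_a z_{a'})^{-\varepsilon(k)/2}$ then produces precisely the factors $\left(\frac{w_\mu}{z_b z_{b'}}\right)^{\varepsilon(k)/2}$ and $\left(\frac{w_\mu}{z_a z_{a'}}\right)^{\varepsilon(k)/2}$ of \eqref{eq:loopeq3}.

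Having done this, I would set $K_c'=C\,K_c$ and verify, in each of the four cases, that the result matches the replacement rule stated in the proposition. The only nonformal point is that the lone surviving monomial in $K_c$ — namely one of $z_{b_c'}^{-1},\ z_{a_c'},\ z_{a_c'}^{-1},\ z_{b_c'}$ — multiplied by the half-monomial $C$ collapses to the prescribed symmetric half-monomial $(z_{a_c}z_{b_c'})^{-1/2}$, $(z_{a_c'}z_{b_c})^{1/2}$, $(z_{a_c'}z_{b_c})^{-1/2}$, or $(z_{a_c}z_{b_c'})^{1/2}$, respectively. I expect this bookkeeping to be the main, albeit routine, obstacle. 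In the two overpass cases $C=(z_{b_c}/z_{a_c'})^{1/2}$, and in the negative-overpass case, for instance, $z_{a_c'}\cdot C=(z_{a_c'}z_{b_c})^{1/2}$ immediately; the positive-overpass case $z_{b_c'}^{-1}\cdot C=(z_{a_c}z_{b_c'})^{-1/2}$ additionally requires the crossing identity $z_{a'}/z_a=z_b/z_{b'}=w$, equivalently $z_a z_b=z_{a'}z_{b'}$, which also justifies the two equal expressions for $C$ in Lemma~\ref{prop:conservation_of_color}. The two underpass cases, where $C=(z_{a_c}/z_{b_c'})^{1/2}$, are handled identically, the positive-underpass verification again invoking $z_a z_b=z_{a'}z_{b'}$ and the negative-underpass one following by pure exponent arithmetic. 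Collecting these four checks completes the identification of \eqref{eq:loopeq3} with the reorganized Proposition~\ref{prop:loopeq}.
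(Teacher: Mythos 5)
Your proposal is correct and is exactly the argument the paper intends: the paper derives Proposition~\ref{prop:loopeq2} in one line by combining Proposition~\ref{prop:loopeq} with Lemmas~\ref{prop:writhe} and~\ref{prop:conservation_of_color}, and you have carried out precisely that combination, including the four case checks that $C\,K_c$ reproduces the stated replacement rule via $z_a z_b = z_{a'}z_{b'}$.
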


Finally, we turn to the expression of the last loop equation 
$L_0=\underset{R_i\ \textrm{region}}{\prod}r_i^{w(K,R_i)}$ that we introduced 
in Section~\ref{sub.loop}.

\begin{proposition}
\label{prop:loopeqK} 
We have the formula:
$$
L_0=\underset{c \in X(D)}{\prod}
\left( \frac{w_\mu}{z_a z_b}\right)^{\varepsilon(c)} 
\frac{(1-\frac{w_\mu}{z_{a'}})(1-z_b)}{(1-\frac{w_\mu}{z_a})(1-z_{b'})}
$$
\end{proposition}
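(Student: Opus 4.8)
The plan is to mimic the corner-factor bookkeeping from the proof of Proposition~\ref{prop:loopeq}, with the loop $\gamma_c$ replaced by the whole knot $K$. First I would expand $L_0=\prod_{R_i} r_i^{w(K,p_i)}$ into a product over corners: since each big region equation $r_i$ is a product of the corner factors $f(v)$ of Figure~\ref{fig:cornerfactors}, and the winding number $w(K,v)$ of a corner $v$ lying in a region $R_i$ equals $w(K,p_i)$, one obtains $L_0=\prod_{v}f(v)^{w(K,v)}$, the product ranging over all corners $v$ of $D$. I would then regroup this product crossing by crossing. The essential difference with Proposition~\ref{prop:loopeq} is that $K$ runs through every crossing once as an overpass and once as an underpass, so every crossing now plays the role that a self-crossing of $\gamma_c$ played there.

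The heart of the argument is the local computation at a single crossing $c$. Near $c$ the four horizontal corners sit in four quadrants, and moving counterclockwise the winding number of $K$ jumps by $+1$ each time one crosses a strand from its right to its left side; hence $w(K,\cdot)$ decomposes locally as a constant plus the indicator of being to the left of the overstrand plus the indicator of being to the left of the understrand. Because the product of the four corner factors at any crossing equals $1$ (this was checked inside the proof of Proposition~\ref{prop:loopeq} using $zz'z''=-1$ and the triangle equations~\eqref{eq:uppertriangle}), the locally constant part drops out, and the crossing contributes exactly the product of the two corner factors to the left of the overstrand times the two corner factors to the left of the understrand. These are precisely the one-pass factors $K_l$ already computed in Proposition~\ref{prop:loopeq}: the overstrand contributes the overpass factor and the understrand the underpass factor, for the appropriate sign of the crossing.

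It then remains to multiply the overpass and underpass factors at each crossing. For a positive crossing this gives $\frac{1}{z_{b'}}\bigl(\tfrac{1-w_\mu/z_{a'}}{1-w_\mu/z_a}\bigr)\cdot\frac{w_\mu}{z_{a'}}\bigl(\tfrac{1-z_b}{1-z_{b'}}\bigr)$, and using the crossing relation $z_{a'}z_{b'}=z_az_b$ (a consequence of $\tfrac{z_{a'}}{z_a}=\tfrac{z_b}{z_{b'}}=w$) the prefactor collapses to $\frac{w_\mu}{z_az_b}$; for a negative crossing the analogous product gives $\frac{z_az_b}{w_\mu}$ times the same rational factor. Together the two cases read $\bigl(\tfrac{w_\mu}{z_az_b}\bigr)^{\varepsilon(c)}\tfrac{(1-w_\mu/z_{a'})(1-z_b)}{(1-w_\mu/z_a)(1-z_{b'})}$, and taking the product over $c\in X(D)$ yields the stated formula.

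I expect the only delicate point to be the winding-number bookkeeping: one must verify that the jumps of $w(K,\cdot)$ across the four local quadrants are governed solely by the two strands at $c$, so that the decomposition into an overstrand indicator and an understrand indicator is exact, and that the orientation conventions make the two left-hand corner factors coincide with the overpass/underpass factors $K_l$ of Proposition~\ref{prop:loopeq} rather than their right-hand counterparts. Once the sign $\varepsilon(c)$ is correctly read off from Figure~\ref{fig:windingnumbers}, the remaining computation is routine and free of extraneous signs, since the two factors of $-1$ arising from $ww'w''=-1$ and $zz'z''=-1$ cancel within each one-pass factor.
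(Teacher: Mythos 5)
Your proposal is correct and follows essentially the same route as the paper: both expand $L_0$ into corner factors weighted by winding numbers, observe that every crossing exhibits the local winding-number pattern of a self-crossing of the loop (the third drawing in Figure~\ref{fig:windingnumbers}) since $K$ traverses it once as an overpass and once as an underpass, and then multiply the two left-corner-factor contributions, collapsing the prefactor via $z_a z_b = z_{a'}z_{b'}$. The only cosmetic difference is that you reuse the one-pass factors $K_l$ from Proposition~\ref{prop:loopeq} while the paper recomputes the product $\frac{z_{ui}'z_{lo}'}{z_{uo}'z_{li}'}$ (resp.\ $\frac{z_{uo}''z_{li}''}{z_{ui}''z_{lo}''}$) directly; the two computations agree.
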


\begin{proof}
We proceed similarly as in the proof of~\ref{prop:loopeq}. As we are taking 
the whole knot $K$ instead of one of the loops $\gamma_c$, the local 
pattern of winding numbers at any crossing looks like the third drawing in 
Figure~\ref{fig:windingnumbers}.
\\ By the corner factor rule of Figure~\ref{fig:cornerfactors}, we get 
a factor 
$$
\frac{z_{ui}'z_{lo}'}{z_{uo}'z_{li}'}=
\frac{(1-\frac{w_\mu}{z_{a'}})(1-\frac{1}{z_b})}{(1-\frac{z_a}{w_\mu})(1-z_{b'})}
=
\left( \frac{w_\mu}{z_a z_b}\right)\frac{(1-\frac{w_\mu}{z_{a'}})(1-z_b)}{
(1-\frac{w_\mu}{z_a})(1-z_{b'})}
$$
at a positive crossing and a factor:
$$
\frac{z_{uo}''z_{li}''}{z_{ui}''z_{lo}''}=
\frac{(1-\frac{z_{a'}}{w_\mu})(1-z_b)}{(1-\frac{w_\mu}{z_a})(1-\frac{1}{z_{b'}})}
=
\left( \frac{z_{a'}z_{b'}}{w_\mu}\right)\frac{(1-\frac{w_\mu}{z_{a'}})(1-z_b)}{
(1-\frac{w_\mu}{z_a})(1-z_{b'})}=\left( \frac{z_a z_b }{w_\mu}\right)\frac{
(1-\frac{w_\mu}{z_{a'}})(1-z_b)}{(1-\frac{w_\mu}{z_a})(1-z_{b'})},
$$
at a negative crossing, using that $z_a z_b=z_{a'}z_{b'}$ at any crossing.
\end{proof}

\subsection{A square root of the holonomy of the longitude}
\label{sub.sroot}

In this section, we show that the holonomy of the longitude 
$w_{\lambda}$ admits a square root in $\C[\calG_D]$. We prove the following.

\begin{proposition}
\label{prop:sqroot}Let $s$ be defined by
\begin{equation}
\label{eq:sqroot}
s=\underset{X(D)}{\prod} 
\frac{(1-\frac{w_{\mu}}{z_a})}{(1-\frac{w_{\mu}}{z_{a'}})}
w^{-1/2}(z_a z_b)^{\frac{\varepsilon(c)}{2}}.
\end{equation}
Then $s \in \C(w_{\mu},w_0,w_c)$ and $s^2=\frac{1}{w_{\lambda}L_0}$.
\end{proposition}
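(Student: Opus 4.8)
The statement splits into an algebraic identity $s^2=1/(w_\lambda L_0)$ and a rationality claim $s\in\C(w_\mu,w_0,w_c)$, and the plan is to treat them separately. For the identity I would proceed by a direct computation, multiplying the expression \eqref{eq:hollongitude2} for $w_\lambda$ by the formula for $L_0$ from Proposition~\ref{prop:loopeqK}. In each crossing's contribution the factors $(1-z_b)$ and $(1-z_{b'})$ cancel against their reciprocals, while the two copies of $(1-w_\mu/z_{a'})$ and of $(1-w_\mu/z_a)$ combine into squares; the scattered powers of $w_\mu$ assemble, using $\sum_c\varepsilon(c)=\mathrm{wr}(D)$, into $w_\mu^{-\mathrm{wr}(D)}\cdot w_\mu^{\mathrm{wr}(D)}=1$. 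This leaves $w_\lambda L_0=\prod_{X(D)} w\,(1-w_\mu/z_{a'})^2(1-w_\mu/z_a)^{-2}(z_a z_b)^{-\varepsilon(c)}$, and inverting and comparing term by term with the definition \eqref{eq:sqroot} yields $s^2=1/(w_\lambda L_0)$. This part is routine bookkeeping.

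The substance is the rationality. By Proposition~\ref{prop.elimination} every arc parameter is a Laurent monomial in $w_0$ and the $w_c$, so the factors $(1-w_\mu/z_a)/(1-w_\mu/z_{a'})$ in \eqref{eq:sqroot} already lie in $\C(w_\mu,w_0,w_c)$. It therefore remains to show that the half-power part $M=\prod_{X(D)} w_c^{-1/2}(z_a z_b)^{\varepsilon(c)/2}$ is a genuine Laurent monomial, i.e.\ that every variable occurs in $M$ with integer exponent. I would verify this one variable at a time: $w_\mu$ does not occur, and the exponent of $w_0$ equals $\sum_c\varepsilon(c)=\mathrm{wr}(D)\in\Z$ because $z_a z_b$ has $w_0$-degree $2$. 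The only delicate case is the exponent $E_j$ of a crossing variable $w_j$.

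To compute $E_j$ I would use that the $w_j$-degree of the arc parameter $z_{l,l+1}$ is $\phi_j(l):=\mathbbm 1[o_j\le l]-\mathbbm 1[u_j\le l]$, where $o_j,u_j$ are the over/under labels of crossing $j$. Writing $z_a=z_{o_c-1,o_c}$ and $z_b=z_{u_c-1,u_c}$ as in Lemma~\ref{prop:conservation_of_color} gives $E_j=-\tfrac12+\tfrac12\sum_c\varepsilon(c)\big(\phi_j(o_c-1)+\phi_j(u_c-1)\big)$. Collapsing the steps of $\phi_j$ rewrites this as $E_j=-\tfrac12+\tfrac12\big(g(o_j)-g(u_j)\big)$, where $g(x)=\sum_{p>x}\varepsilon(c_p)$ sums the crossing signs over all pass-labels $p$ exceeding $x$; hence $g(o_j)-g(u_j)=\pm\sum_{p\in(j_0,j_1]}\varepsilon(c_p)$, with $j_0<j_1$ the two labels of $j$.

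The hard part will be the parity of this last sum, and this is exactly where the diagrammatic combinatorics enters. Here I would invoke Lemma~\ref{prop:writhe}: the sum over the open interval $(j_0,j_1)$ equals $2\big(\mathrm{wr}(\gamma_j)+\mathrm{lk}(\gamma_j,\gamma_j')\big)$, an even integer, while the single endpoint $j_1$ is a pass of crossing $j$ and contributes $\varepsilon(j)=\pm1$. Thus $g(o_j)-g(u_j)$ is odd, so $E_j\in\Z$, which is precisely the claim. In effect the linking-number interpretation of Lemma~\ref{prop:writhe} supplies the Dowker--Thistlethwaite parity of the two labels at a crossing, and that parity is what forces the square root in \eqref{eq:sqroot} to disappear. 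The one configuration requiring separate attention is the special crossing carrying the label $1=u$ of the base arc $w_0=z_{1,lo}$: there the truncation $i\ge 2$ in the arc-parameter formula shifts $\phi_j$ by a constant, but the same parity count applies and again yields $E_j\in\Z$, completing the argument.
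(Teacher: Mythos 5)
Your proof is correct and follows the paper's strategy: the identity $s^2=1/(w_\lambda L_0)$ comes from directly multiplying \eqref{eq:hollongitude2} with the formula of Proposition~\ref{prop:loopeqK}, and the rationality claim is reduced to an integrality/parity statement for the exponent of each $w_j$, which is exactly the Dowker--Thistlethwaite parity of the two labels of a crossing. The only divergence is cosmetic: where the paper argues directly that $j'-j$ is odd by counting intersection points of $\gamma_j$ with the rest of $K$, you source the same parity from Lemma~\ref{prop:writhe}, whose proof rests on the identical self-crossing/mixed-crossing decomposition.
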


\begin{proof}
By Equation~\eqref{eq:hollongitude2},
$$
w_{\lambda}=\underset{X(D)}{\prod}w_{\mu}^{-\varepsilon(c)} w 
\left( \frac{1-\frac{w_{\mu}}{z_{a'}}}{1-\frac{w_{\mu}}{z_a}}\right)
\left(\frac{1-z_{b'}}{1-z_b}\right) \,,
$$
and by Equation~\eqref{eq:loopeq3}:
$$
L_0=\underset{X(D)}{\prod} 
\left( \frac{w_{\mu}}{z_a z_b}\right)^{\varepsilon(c)} 
\left( \frac{1-\frac{w_{\mu}}{z_{a'}}}{1-\frac{w_{\mu}}{z_a}}\right) 
\left( \frac{1-z_b}{1-z_{b'}}\right) \,.
$$
Those two equations clearly imply that $s^2=\frac{1}{w_{\lambda}L_0}$. The 
non-trivial part is to show that $s$ is actually in $\C(w_{\mu},w_0,w_c)$, 
which is equivalent to showing the degree of the monomial 
$\underset{X(D)}{\prod} w z_a z_b$ is even in each of the variable 
$w_{\mu},w_0$ and $w_c$.

First we note that all arc parameters $z_a, z_b$ have degree $0$ along 
$w_{\mu}$ and degree $1$ along $w_0$. So what we need to show is that the 
product $\underset{X(D)}{\prod} z_a z_b$ has odd degree along each variable 
$w_c$ associated to a crossing. We remark that this product is also the 
product of all arc parameters as each arc is an inward arc of exactly one 
crossing.

Let $c$ be a crossing with labels $j<j'$. Then for any arc $[k,k+1]$ the 
arc parameter $z_{k,k+1}$ is of the form 
$z_{k,k+1}=w_0 w_c^{\varepsilon} 
\underset{c'\neq c}{\prod}w_{c'}^{\varepsilon_{c'}}$, 
where $\varepsilon \in \lbrace -1,0,1\rbrace$, and $\varepsilon \neq 0$ if 
and only if $[k,k+1]\subset [j,j']$. So all we have to show is that $j'-j$ 
is always odd for any crossing $c$. The reason is that the loop 
$\gamma=[j,j']/_{j\sim j'}$ has $j'-j-1$ intersection points with the rest of 
$K$, and those intersection points bound a collection of segments, which 
are the intersection of $K$ with a disk bounded by $\gamma$. So $j'-j-1$ is 
always even. 
\end{proof}


\section{$q$-holonomic functions, creative telescoping and 
certificates}
\label{sec.qholo}

In this section we recall some properties of $q$-holonomic functions,
creative telescoping and certificates, which we will combine with
a state sum formula for the colored Jones polynomial to prove our
main Theorem~\ref{thm.1}. Recall that a $q$-holonomic function 
$f: \BZ \to \BQ(q)$ is one that satisfies a non-zero recursion relation
of the form~\eqref{eq.Jrec}, i.e., a function with 
annihilator~\eqref{eq.annf} satisfying $\Ann(f) \neq 0$. $q$-holonomic 
functions of several variables are defined using a notion of Hilbert series
dimension, and are closed under sums, products as well as summation of 
some of their variables. Building blocks of $q$-holonomic functions are
the proper $q$-hypergeometric functions of~\cite{WZ}. For a detailed
discussion of $q$-holonomic functions, we refer the reader to the survey
article~\cite{GL:survey}.

The following proposition is the fundamental theorem of $q$-holonomic 
functions. When $F$ is proper $q$-hypergeometric, a proof was given in 
Wilf-Zeilberger~\cite{WZ}.
A detailed proof of the next proposition, as well as a self-contained 
introduction to $q$-holonomic functions, we refer the reader 
to~\cite{GL:survey}. 

\begin{proposition}
\label{prop:qholo}
\rm{(a)}
Proper $q$-hypergeometric functions are $q$-holonomic. \newline
\rm{(b)}
Let $F: \BZ^{r+1} \to \BQ(q)$ be $q$-holonomic in the variables $(n,k) 
\in \BZ \times \BZ^r$ such that $F(n,\cdot)$ has finite support for any 
$n$ and let $f: \BZ \to \BQ(q)$ be defined by
$$
f(n)=\underset{k \in \Z^r}{\sum} F(n,k).
$$
Then $f$ is $q$-holonomic.
\end{proposition}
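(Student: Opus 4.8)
The plan is to treat both parts through the Bernstein--Hilbert dimension theory of modules over the $q$-Weyl algebra, which underlies the notion of $q$-holonomicity used in the statement. Write $W_{r+1}=\BQ(q)[Q_n,Q_{k_1},\dots,Q_{k_r}]\langle E_n,E_{k_1},\dots,E_{k_r}\rangle$ for the ring of operators acting on functions of $(n,k)$, with $E_iQ_i=qQ_iE_i$ and all operators in different variables commuting. For a function $G$, let $M_G=W_{r+1}/\Ann(G)\cong W_{r+1}\cdot G$ be the associated cyclic module and $d(M_G)$ its Hilbert-series dimension, computed from the Bernstein filtration by total degree in the generators. The structural facts I would invoke are Bernstein's inequality $d(M)\geq r+1$ for every nonzero finitely generated module, together with the definition that $G$ is $q$-holonomic exactly when $d(M_G)=r+1$, the minimal possible value.

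For part (a), I would argue by a direct dimension count, as in \cite{WZ}. A proper $q$-hypergeometric $F$ satisfies, in each of its $r+1$ variables, a first-order recursion $E_iF=R_iF$ with $R_i$ a rational function of $q,Q_n,Q_k$; clearing denominators yields operators $P_i=a_i(Q)E_i-b_i(Q)\in\Ann(F)$ of $E_i$-order one. Using the $P_i$ to reduce the $E_i$-degree of monomials, one checks that the span of monomials of total degree $\leq s$ in $M_F$ has dimension $O(s^{r+1})$, so $d(M_F)\leq r+1$; combined with Bernstein's inequality this forces $d(M_F)=r+1$. I would simply recall this estimate and cite \cite{WZ}.

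For part (b) the plan is creative telescoping. Write $A=\BQ(q)[Q_n]\langle E_n\rangle$ for the sub-$q$-Weyl algebra acting only on $n$. Since $E_n$ and $Q_n$ commute with every $E_{k_i}$, the $\BQ(q)$-subspace $V=\sum_{i=1}^r(E_{k_i}-1)M_F$ is in fact an $A$-submodule of $M_F$, so $\overline M=M_F/V$ is an $A$-module. The decisive step is to show $\overline M$ is holonomic over the one-variable algebra $A$, i.e.\ $d_A(\overline M)\leq 1$: quotienting by the images of the $r$ operators $E_{k_i}-1$ drops the Hilbert dimension from $r+1$ to at most $1$. Granting this, the image $\overline F$ of $F$ is annihilated by some nonzero $P\in A$ (otherwise $A\cdot\overline F\cong A$ would have dimension $2>1$), and $P\overline F=0$ unwinds to $PF=\sum_{i=1}^r(E_{k_i}-1)G_iF$ with $G_i\in W_{r+1}$. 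I would then fix $n$ and sum over $k\in\BZ^r$: because $F(n,\cdot)$ has finite support and each $G_iF$ is a finite $\BQ(q)$-combination of shifts of $F$, hence finitely supported, every term $\sum_{k}(E_{k_i}-1)(G_iF)(n,k)$ telescopes to $0$. As $P$ involves only $Q_n,E_n$ it passes through the sum, giving $Pf=\sum_kPF(n,k)=0$ with $P\neq0$, so $f$ is $q$-holonomic.

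The hard part is precisely the dimension-drop claim $d_A(\overline M)\leq 1$, the $q$-analogue of Bernstein's theorem that the direct image of a holonomic module under the projection forgetting the $k$-variables remains holonomic. Its proof is a good-filtration and Hilbert-polynomial bookkeeping showing that the $r$-fold quotient lowers the degree of the Hilbert polynomial by exactly $r$. The one genuinely $q$-specific point to verify is that the commutators $[E_i,Q_i]=(q-1)Q_iE_i$ do not raise total degree, so that the associated graded remains well behaved and the classical estimates transfer; this works because $q$ is a unit in the coefficient field $\BQ(q)$, and the argument would fail at a root of unity. I would carry out this bookkeeping as in \cite{GL:survey} and refer the reader there for the complete details.
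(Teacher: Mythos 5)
The paper does not actually prove this proposition: it is stated as the ``fundamental theorem of $q$-holonomic functions'' and both parts are delegated to \cite{WZ} and \cite{GL:survey}. Your proposal reconstructs, in outline, exactly the argument that lives in those references --- Bernstein filtration and Hilbert dimension for part (a), and the quotient module $M_F/\sum_i(E_{k_i}-1)M_F$ together with the dimension-drop lemma and telescoping for part (b) --- so you are not taking a different route so much as unpacking the citation. The architecture of part (b) is sound: the observation that $\sum_i(E_{k_i}-1)M_F$ is a module over the subalgebra in the $n$-variable, the use of Bernstein's inequality to extract a nonzero $P$ annihilating $\overline F$, and the check that each $G_iF(n,\cdot)$ is finitely supported so the sums genuinely telescope, are all the right steps, with the genuinely hard input (the $q$-analogue of Bernstein's theorem on direct images) correctly isolated and deferred.

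One caveat on part (a): as written, your dimension count never visibly uses properness. Every $q$-hypergeometric term (proper or not) admits first-order operators $a_i(Q)E_i-b_i(Q)\in\Ann(F)$, yet multivariate hypergeometric terms need not be holonomic, so the step ``using the $P_i$ to reduce the $E_i$-degree, one checks the span of monomials of degree $\le s$ has dimension $O(s^{r+1})$'' cannot follow from the first-order recursions alone. The reduction trades $E_i$-degree for denominators $b_i/a_i$, and the estimate holds only because for a \emph{proper} term the denominators are shifts of finitely many $q$-Pochhammer factors in linear forms, which divide one another and hence admit a common denominator of controlled degree; for a generic certificate the shifted denominators are pairwise coprime and the space $\mathcal{F}_s\cdot F$ grows too fast. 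Since you restrict to proper terms and cite \cite{WZ} for the estimate, this is a presentational gap rather than a fatal one, but the sentence should be rephrased so that it does not read as a proof that all $q$-hypergeometric terms are $q$-holonomic.
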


The above proposition combined with an $R$-matrix state-sum formula
for the colored Jones polynomial implies that the colored Jones polynomial
of a knot (or link, colored by representations of a fixed simple Lie
algebra) is $q$-holonomic~\cite{gale2}.

With the notation of the above proposition, a natural question is 
how to compute $\Ann(f)$ given $\Ann(F)$. This is a difficult problem
practically unsolved. However, an easier question can be solved: namely
given $\Ann(F)$, how to compute a nonzero element in $\Ann(f)$. The
answer to this question is given by certificates, which are synonymous to 
the method of creative telescoping, coined by Zeilberger~\cite{Zeil:creative}.
The latter aims at computing recursions for holonomic functions obtained 
by summing/integrating all but one variables. For a detailed discussion 
and applications, see~\cite{AB,WZ} and also \cite{BLS}. 

\begin{proposition}
\label{prop:cert}
{\rm (a)} Let $F$ and $f$ be as in Proposition~\ref{prop:qholo}, and
consider the map $\varphi$ from~\eqref{eq.phi}. Let 
\be
\label{eq.PF}
P \in \Ann(F)\cap \BQ[q,Q] \langle E, E_i \rangle \,.
\ee
Then $\varphi(P) \in \Ann(f)$. 
\newline
{\rm (b)} There exist $P$ as above with $\varphi(P) \neq 0$. 
\end{proposition}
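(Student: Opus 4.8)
This is a direct computation whose entire force comes from the two hypotheses: that the coefficients of $P$ do not involve $Q_k$, and that $F(n,\cdot)$ has finite support. Write $P=\sum_\alpha c_\alpha(q,Q)\,E^{a_\alpha}E_k^{b_\alpha}$ with $c_\alpha\in\BQ[q,Q]$, which is legitimate precisely because $P\in\BQ[q,Q]\langle E,E_k\rangle$ so that no $Q_k$ occurs. Evaluating $PF=0$ pointwise gives $\sum_\alpha c_\alpha(q,q^n)F(n+a_\alpha,k+b_\alpha)=0$ for all $(n,k)$. Fix $n$ and sum over $k\in\BZ^r$; finite support makes every sum finite, so I may interchange the sum over $k$ with the sum over $\alpha$. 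Since $c_\alpha(q,q^n)$ is independent of $k$ it factors out, and the shift $E_k^{b_\alpha}$ merely reindexes the summation variable, so $\sum_k F(n+a_\alpha,k+b_\alpha)=\sum_k F(n+a_\alpha,k)=f(n+a_\alpha)$. This yields $\sum_\alpha c_\alpha(q,q^n)f(n+a_\alpha)=0$, which is exactly $\varphi(P)f=0$, because $\varphi$ collapses each $E_k^{b_\alpha}$ to $1$ while leaving $c_\alpha(q,Q)E^{a_\alpha}$ untouched. Hence $\varphi(P)\in\Ann(f)$.

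\textbf{Part (b), reduction.} I would first reduce the existence of a good certificate to a telescoping statement whose coefficients are free of $Q_k$. By Proposition~\ref{prop:qholo} the sum $f$ is $q$-holonomic, so fix a nonzero $P_0=\sum_a b_a(q,Q)E^a\in\Ann(f)$ and regard it as an operator acting on functions of $(n,k)$. Summing $P_0F$ over $k$ and using the computation of part (a) gives $\sum_k (P_0F)(n,k)=(P_0 f)(n)=0$, so for each $n$ the finitely supported function $k\mapsto (P_0F)(n,k)$ sums to zero; equivalently $P_0F$ lies in the image of the boundary operators $E_{k_i}-1$ acting on finitely supported functions. The goal is to upgrade this to an \emph{operator} identity $P_0F=\sum_{i=1}^r (E_{k_i}-1)(G_iF)$ with certificates $G_i\in\BQ(q,Q)\langle E,E_k\rangle$ that are themselves free of $Q_k$. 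Granting this, the operator $\tilde P:=P_0-\sum_i (E_{k_i}-1)G_i$ annihilates $F$, lands in $\BQ[q,Q]\langle E,E_k\rangle$ after clearing the common $\BQ[q,Q]$-denominator, and satisfies $\varphi(\tilde P)=P_0\neq 0$ since $\varphi$ kills each $E_{k_i}-1$. This is exactly the nonzero element demanded in (b).

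\textbf{Part (b), the obstacle.} The heart of the matter, and the step I expect to be the main difficulty, is producing the certificates $G_i$ inside the $Q_k$-free subalgebra; the naive Wilf--Zeilberger certificate is rational in $Q_k$ and therefore falls outside the domain of $\varphi$. Here I would invoke the quantitative side of the theory. Because $F$ is proper $q$-hypergeometric, each shift $E^aE_k^bF$ equals $\rho_{a,b}(Q,Q_k)F$ for an explicit rational function $\rho_{a,b}$, and the ``proper'' hypothesis forces all the $\rho_{a,b}$ with $a\le A$, $b\le B$ into a single $\BQ(q,Q)$-subspace of rational functions of $Q_k$ whose dimension grows only linearly in $A+B$. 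A Wilf--Zeilberger parameter count (more monomials $E^aE_k^b$ than the dimension of that subspace once $A,B$ are large) produces a nonzero $\BQ(q,Q)$-linear relation $\sum_{a,b}c_{a,b}(Q)\rho_{a,b}=0$, that is, a $Q_k$-free element of $\Ann(F)$; the same count, carried out after quotienting by the relations that are pure $k$-differences (those lying in $\ker\varphi$), is what guarantees that such a relation can be chosen with nonzero pure-$n$ part, i.e.\ with $\varphi(\tilde P)\neq 0$. This separation of genuine telescopers from trivial $k$-differences is precisely the existence half of creative telescoping, and the detailed holonomic dimension estimates are carried out in~\cite{WZ} and in the survey~\cite{GL:survey}, to which I would appeal. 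The finite-support hypothesis on $F$ re-enters at the very end, ensuring that the pure $k$-difference relations genuinely telescope to zero upon summation and hence do not interfere when one passes to $\Ann(f)$.
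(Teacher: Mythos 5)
Your part (a) is correct and is essentially the paper's own argument: the paper writes $P=\tilde P(E,Q)+\sum_i(E_{k_i}-1)R_i$ and observes that the difference terms telescope to zero thanks to the finite support, which is the same computation as your reindexing of $\sum_k F(n+a_\alpha,k+b_\alpha)$.

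Part (b) is where the gap is, in two places. First, the ``reduction'' is not a valid reduction. You fix an arbitrary nonzero $P_0\in\Ann(f)$ and propose to realize it as $\varphi(\tilde P)$ by finding certificates $G_i$, free of $Q_k$, with $P_0F=\sum_i(E_{k_i}-1)(G_iF)$. The pointwise observation that $k\mapsto (P_0F)(n,k)$ is finitely supported and sums to zero only says that for each $n$ this function is a finite combination of differences; it gives no control that these differences come from applying operators $G_i$ at all, let alone operators whose coefficients avoid $Q_k$. Moreover, the claim that a \emph{given} $P_0\in\Ann(f)$ lifts in this way is exactly the assertion that $\varphi(\Ann(F)\cap\BQ(q,Q)\langle E,E_k\rangle)$ exhausts $\Ann(f)$, i.e.\ that the certificate recursion coincides with the minimal recursion --- which is the Question the authors explicitly leave open right after establishing only the inclusion~\eqref{eq.cert}. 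So this step cannot be ``granted''; read weakly it is circular (the existence of some liftable $P_0$ \emph{is} part (b)), and read strongly it proves more than the paper knows.

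Second, the fallback argument in your last paragraph does not match the hypotheses and is not carried out. The proposition assumes only that $F$ is $q$-holonomic with finite support in $k$ (the setting of Proposition~\ref{prop:qholo}(b)), whereas your parameter count requires $F$ to be proper $q$-hypergeometric. More importantly, the decisive claim --- that the count ``carried out after quotienting by the relations lying in $\ker\varphi$'' still yields a nonzero relation --- is asserted rather than proved, and the naive version does not close: among operators of bidegree at most $(A,B)$ the kernel of $\varphi$ has codimension only $A+1$, so having more monomials than constraints does not by itself force a relation with nonzero image. The paper's route is different: it quotes the elimination property of $q$-holonomic ideals (Zeilberger; Koutschan, Thm.~2.7), valid for general $q$-holonomic $F$, to get a nonzero element of $\Ann(F)\cap\BQ[q,Q]\langle E,E_k\rangle$, and then adjusts it (multiplying on the left by a suitable monomial) to ensure $\varphi(P)\neq 0$. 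If you want part (b) to stand, you should either cite that elimination statement directly or supply the genuine Wilf--Zeilberger degree estimates, stated under the hypotheses actually assumed.
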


Nonzero elements $P$ as in~\eqref{eq.PF}
are called ``certificates'', and those that satisfy $\varphi(P) \neq 0$
are called ``good certificates''. Certificates are usually computed in
the intersection $\Ann(F) \cap \BQ(q,Q) \langle E, E_i \rangle$, where
membership reduces to a linear algebra question over the field $\BQ(q,Q)$
and then lifted to the ring $\BQ[q,Q] \langle E, E_i \rangle$ by clearing 
denominators.

Part (b) is shown in Zeilberger~\cite{Zeil:holo} and in detail in 
Koutschan's thesis~\cite[Thm.2.7]{Koutschan}. In the latter reference,
this is called the ``elimination property'' of holonomic ideals.
Part (a) is easy and motivates the name ``creative telescoping''. Indeed, 
one may write
$$
P(E,Q,E_i)=\tilde{P}(E,Q)+\underset{i=1}{\overset{d}{\sum}} 
(E_i-1)R_i(E,Q,E_i).
$$
A recurrence relation of this form is also called a certificate. After 
expanding the sum $\underset{k \in \Z^d}{\sum} P(E,Q,E_i)F(n,k)=0$, the terms
$$
\underset{k \in \Z^d}{\sum} (E_i-1)R_i(E,Q,E_i)F(n,k),
$$
are telescoping sums and thus equal to $0$. Finally, note that when
$F$ is proper $q$-hypergeometric, an operator $P$ as above may be found
by using its monomials as unknowns and solving a system of linear equations
of $PF/F$. Hence, once $P$ is found (and that is the difficult part), it is
easy to check that it satisfies the relation $PF=0$, which reduces to
an identity in a field of finitely many variables--hence the 
name ``certificate''.

Part (b) follows by multiplying an element of $\Ann(F)$ on the left if
necessary by a monomial in $Q_i$. We thank C. Koutschan for pointing this 
out to us.


\section{The colored Jones polynomial of a knot}
\label{sec.CJ}

\subsection{State sum formula for the colored Jones polynomial
of a knot diagram}

In this section, we use a diagram $D$ of an oriented knot $K$ to give
a (state sum) formula for the $n$-th colored Jones polynomial 
$J_K(n)\in \Z[q^{\pm 1}]$ of $K$. 
Such a formula is obtained by placing
an $R$-matrix at each crossing, coloring the arcs of the diagram with
integers, and contracting tensors as described for instance in Turaev's
book~\cite{Tu:book}. The formula described in this section follows the 
conventions introduced in \cite{gale1}; we also refer to \cite{gale1} 
for all proofs.

For $n\geqslant 0$, we define the $n$-th quantum factorial by 
$$
(q)_n=\underset{i=1}{\overset{n}{\prod}} (1-q^i).
$$
Note that quantum factorials satisfy the recurrence relation 
$(q)_{n+1}=(1-q^{n+1})(q)_n$ for any $n\geqslant 0$. As it will be helpful 
for us to have recurrence relations that are valid for any $n\in \Z$, we 
will use the following convention of quantum factorials and their inverses:
$$
(q)_n=\begin{cases} 
\prod_{j=1}^n  (1-q^i) & \textrm{if} \ n\geqslant 0,
\\ 0 & \textrm{if} \ n<0, 
\end{cases}
\qquad
\frac{1}{(q)_n}=\begin{cases}\frac{1}{
\underset{i=1}{\overset{n}{\prod}} (1-q^i)} & \textrm{if} \ n\geqslant 0,
\\ 0 & \textrm{if} \ n<0 \,.
\end{cases}
$$
With the above definition and with the notation of~\eqref{eq.EQ} we have: 
$$
(1-qQ)(E-(1-qQ)) \in \Ann((q)_n), \qquad 
((1-qQ)E-1) \in \Ann(1/(q)_n) \,.
$$
Fix a labeled diagram $D$ of an oriented knot $K$ as in 
Section~\ref{sub.labelD}. After possibly performing a local rotation, one can 
arrange $D$ so that at each crossing the two strands of $K$ are going upwards. 
The diagram $D$ is then composed of two types of pieces: 
the crossings (which can be possible or negative) and local extrema. Let 
$\mathrm{arc}(D)$ be the set of arcs of the diagram $D$, we say that a 
coloring 
$$
r : \mathrm{arc}(D) \longrightarrow \Z
$$
is $n$-admissible if the color of any arc is in $[ 0, n ]$ 
and for any crossing,  if $a,a',b,b'$ are the color of the neighboring arcs 
in shown in Figure~\ref{fig:CJPweights}, then $a'-a=b-b'=k\geqslant 0$. Let 
$S_{D,n}$ be the set of all $n$-admissible colorings of the arcs of $D$. 
Note that $S_{D,n}$ coincides with the set of lattice points in 
the $n$-th dilatation of a rational convex polytope $P_D$ 
defined by the $n$-admissibility conditions.

\begin{figure}[!htpb]
\centering
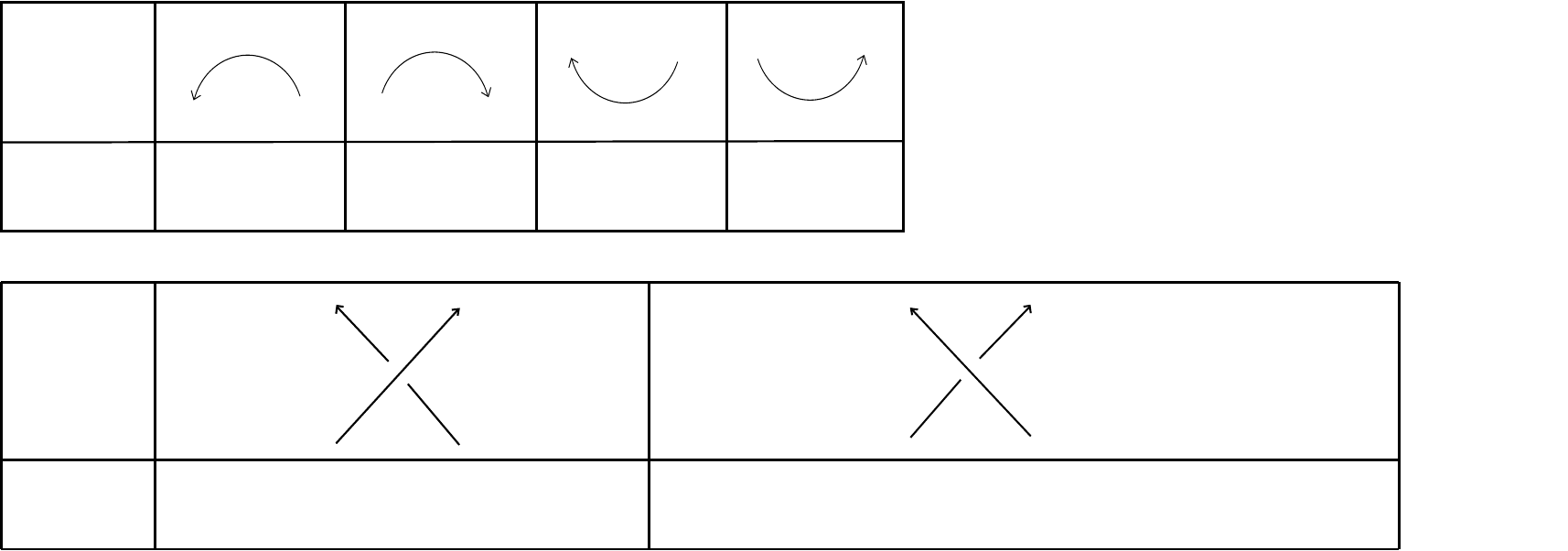
\caption{The local parts $X$ of $D$, their arc-colors $r$ and their 
weights $w(X,r)$. }
\label{fig:CJPweights}
\end{figure} 

For a proof of the next proposition, we refer to \cite[Sec.2]{gale1}. 

\begin{proposition}
The normalized $n$-th colored Jones polynomial of $K$ is obtained by 
the formula:
\be
\label{eq.Jw}
J_K(n)=q^{n/2} \underset{r\in S_{D,n}}{\sum}\ti w_D(n,r),
\ee
where $\ti w_D(n,r)=\underset{X \ \textrm{piece}}{\prod} w(X,r)$ 
is a product of weights associated to crossings and extrema of $D$ as 
shown in Figure~\ref{fig:CJPweights}.
\end{proposition}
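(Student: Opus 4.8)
The plan is to derive the formula directly from the Reshetikhin--Turaev construction of the colored Jones polynomial. Recall that $J_K(n)$ is a suitably normalized quantum invariant obtained by coloring $K$ with the $(n+1)$-dimensional irreducible representation $V_n$ of $U_q(\mathfrak{sl}_2)$: after arranging $D$ so that the height function is Morse and all crossings point upward, the diagram decomposes into elementary pieces --- positive and negative crossings together with local maxima and minima (cups and caps). Functoriality of the Reshetikhin--Turaev functor expresses the invariant as the composition of the linear maps attached to these pieces, namely the braidings $R^{\pm 1} \colon V_n \otimes V_n \to V_n \otimes V_n$ at crossings and the (co)evaluation morphisms at extrema. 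I would first recall this definition and fix the normalization so that the unknot is sent to $1$; this choice is what is ultimately responsible for the global prefactor.

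Next I would pass from morphisms to an explicit state sum by choosing the standard weight basis $\{v_a\}_{a=0}^{n}$ of $V_n$. Writing each elementary morphism as a matrix in this basis and contracting along the arcs of $D$ turns the composition into a sum $\sum_r \prod_X (\text{matrix entry of } X)$ over all labelings $r \colon \mathrm{arc}(D) \to \{0,\dots,n\}$ of the arcs by basis indices; this is exactly the sum over $S_{D,n}$. The key structural input is weight conservation: since $R^{\pm 1}$ commutes with the Cartan part of $U_q(\mathfrak{sl}_2)$, its matrix entries vanish unless the total weight entering a crossing equals the total weight leaving it. Translating this into the arc-colors $a,a',b,b'$ around a crossing yields the constraint $a'-a = b-b' = k$, while nonnegativity of the summation index in the $R$-matrix expansion forces $k\geq 0$; together these are precisely the $n$-admissibility conditions defining $S_{D,n}$.

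It then remains to identify the surviving matrix entries with the local weights $w(X,r)$ of Figure~\ref{fig:CJPweights}. For the cups and caps this is the evaluation of the (co)evaluation maps on weight vectors, producing the factors $q^{\pm(2a-n)/4}$. For the crossings one inserts the explicit formula for the $R$-matrix of $V_n\otimes V_n$ in the weight basis: its entries are indexed by the single nonnegative integer $k$ and come out as the product of a $q$-quadratic prefactor in $(n,a,a',b,b')$ with the quantum-factorial ratio $\tfrac{(q)_{n-a}}{(q)_{n-a'}}\tfrac{(q)_b}{(q)_{b'}(q)_k}$, together with the extra sign $(-1)^k$ for the inverse braiding. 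Matching these against the two crossing weights in the figure, and collecting the leftover framing and normalization contribution into the factor $q^{n/2}$, gives \eqref{eq.Jw}.

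The main obstacle is computational bookkeeping rather than anything conceptual: one must pin down the explicit $R$-matrix coefficients in the chosen basis --- every $q$-power in the prefactor, the sign convention for negative crossings, and the half-integer exponents at extrema --- and then verify that all the scattered framing and duality corrections assemble into exactly the global factor $q^{n/2}$ and nothing more. Since these conventions and the resulting local weights are precisely those recorded in \cite[Sec.2]{gale1}, I would adopt that normalization throughout, after which the verification reduces to matching tensor entries piece by piece.
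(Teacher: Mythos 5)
Your outline is the standard Reshetikhin--Turaev state-sum derivation, which is precisely the route the paper intends: the paper gives no proof of its own and simply refers to \cite[Sec.2]{gale1} for these conventions, local weights and the resulting formula, so your plan of expanding the elementary morphisms in the weight basis, using weight conservation to obtain the admissibility constraints $a'-a=b-b'=k\geq 0$, and matching entries against Figure~\ref{fig:CJPweights} is exactly the argument being cited. The one slip is the aside that the normalization sends the unknot to $1$: with the paper's convention $J_{\mathrm{Unknot}}(n)=\frac{1-q^{n+1}}{1-q}$, so the relevant invariant is the unreduced one (unknot $\mapsto$ quantum integer), which is what you get anyway once you adopt the normalization of \cite{gale1} as you propose.
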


The insertion of the factor $q^{n/2}$ in front of the above sum is done
for convenience only, so that $J_K(n)$ is a Laurent polynomial in $q$
rather than one in $q^{1/2}$. This normalization plays no role in the
AJ Conjecture. Note that we have $J_K(0)=1$ for every knot $K$ and 
$J_{\mathrm{Unknot}}(n)=\frac{1-q^{n+1}}{1-q}$ for any 
$n\geqslant 0$ and $J_K(1,q^{-1})/J_{\mathrm{Unknot}}(1,q^{-1})$ is the Jones 
polynomial of $K$. 

Note that the color of all arcs are completely determined by the 
shifts $(k_1,\dots,k_{c(D)}) \in \Z^{c(D)}$ associated to crossings and 
the color 
$k_0$ of the arc $[1,2]$. In other words, $r=r(k)$ is a linear function
of $k=(k_0,\dots,k_{c(D)}) \in \Z^{c(D)+1}$. 
Suppressing the dependence on $q$, we abbreviate $\ti w_D(n,r(k))(q)$ simply
by $w_D(n,k)$. 
  
When examining recurrence relations for the colored Jones it will be more 
convenient to express $J_K(n)$ as a sum over all $k\in \Z^{c(D)+1}$ rather 
than a sum over colorings $r$ in the set $S_{D,n}$ of lattice points in 
the rational convex polytope $P_D$. For this we have the lemma:

\begin{lemma}
For any knot $K$, we have:
\be
\label{eq.Jkn}
J_K(n)=\underset{k\in \Z^{c(D)+1}}{\sum} w_D(n,k) \,.
\ee
\end{lemma}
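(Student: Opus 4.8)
The plan is to deduce \eqref{eq.Jkn} from the admissible state sum \eqref{eq.Jw} in two steps: first re-index the admissible sum by the shift vector $k=(k_0,\dots,k_{c(D)})$, and then enlarge the range of summation from the admissible set to all of $\Z^{c(D)+1}$ at no cost, because the summand vanishes off the admissible set. For the first step, recall that $r=r(k)$ is the affine-linear map sending the base color $k_0$ of the arc $[1,2]$ and the crossing shifts $(k_c)_{c\in X(D)}$ to the induced coloring of all arcs. I would check that this map restricts to a bijection between $\{k\in\Z^{c(D)+1} : r(k)\in S_{D,n}\}$ and $S_{D,n}$: it is injective since $k_0$ and each $k_c$ are recovered from a coloring as a color and as a difference of colors, and it is surjective onto $S_{D,n}$ since any admissible coloring is reconstructed from its value on $[1,2]$ together with its crossing shifts. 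Under this bijection $\ti w_D(n,r(k))=w_D(n,k)$, so \eqref{eq.Jw} becomes a sum of $w_D(n,k)$ over the admissible $k$, after absorbing the global normalizing factor $q^{n/2}$ into the summand.

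For the second step, I would argue that $w_D(n,k)=0$ whenever $r(k)\notin S_{D,n}$, so that adjoining the non-admissible $k$ changes nothing and in particular keeps the support finite. This is where the convention $(q)_m=0$ for $m<0$ and $1/(q)_m=0$ for $m<0$ does the work. Reading off the crossing weights of Figure~\ref{fig:CJPweights}, each crossing contributes a factor $\frac{(q)_{n-a}}{(q)_{n-a'}}\frac{(q)_b}{(q)_{b'}(q)_k}$ (for either sign of the crossing); hence a negative shift $k<0$ forces $1/(q)_k=0$, a left-hand color exceeding $n$ forces one of $(q)_{n-a}$, $1/(q)_{n-a'}$ to vanish, and a right-hand color below $0$ forces one of $(q)_b$, $1/(q)_{b'}$ to vanish. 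Thus $w_D(n,k)\neq 0$ already implies $k_c\ge 0$ for every crossing, every left-hand arc-color $\le n$, and every right-hand arc-color $\ge 0$.

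The main obstacle is to upgrade this to full $n$-admissibility, i.e.\ to show that these directly-enforced inequalities already force every arc color to lie in $[0,n]$. The difficulty is that each arc is ``seen'' only by the weights of its two adjacent crossings, and depending on $D$ it may occupy a left-hand slot at both of its ends (so that only the bound $\le n$ is visible) or a right-hand slot at both ends (so that only $\ge 0$ is visible); the complementary bound is then not enforced at any single crossing and must be obtained globally. I expect to handle this by an extremal argument along the knot: assuming some color exceeds $n$, pass to an arc of maximal color and, using $a'-a=b-b'=k\ge 0$ together with the resulting conservation $a+b=a'+b'$ at each crossing, trace its strand through successive crossings to produce either a crossing at which it sits in a left-hand slot (an immediate contradiction via $(q)_{n-a}$) or a closed chain of maximal-color arcs whose existence is incompatible with the global structure of the diagram $D$; the symmetric argument bounds colors below by $0$. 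Once the support of $w_D(n,\cdot)$ is identified with $S_{D,n}$, the identity \eqref{eq.Jkn} follows at once from \eqref{eq.Jw}.
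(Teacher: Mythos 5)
Your proposal is correct and follows essentially the same route as the paper: the vanishing conventions for $(q)_m$ and $1/(q)_m$ force the local inequalities $k_c\ge 0$, overpass colors $\le n$ and underpass colors $\ge 0$ at each crossing, and one then propagates along the knot (the paper chains the inequalities $c_{i,i+1}\ge c_{i-1,i}$ through overpasses and $c_{i,i+1}\le c_{i-1,i}$ through underpasses backward to the nearest under-/overpass, which is just a direct rephrasing of your extremal argument, and settles your ``closed chain'' alternative by the observation that every crossing contributes one overpass and one underpass). The conservation identity $a+b=a'+b'$ is not needed for this step, and your more explicit treatment of the re-indexing $r=r(k)$ is a harmless elaboration of what the paper takes as immediate.
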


\begin{proof}
We recall that we have set the convention $\frac{1}{(q)_n}=0$ if $n<0$. 
From the definition of weights associated to crossings, we see that at 
any crossing the weight vanishes unless $k\geqslant 0$, $b'\geqslant 0$ 
and $a'\leqslant n$. 

Pick a coloring so that the associated weight is non-zero. Consider 
the color $c_{i,i+1}$ of the arc $[i,i+1]$. If $i$ is an underpass, then we 
get that $c_{i,i+1}\geqslant 0$. If on the other hand $i$ is an overpass, 
then $c_{i,i+1}=c_{i-1,i}+k_i$, so $c_{i,i+1}\geqslant c_{i-1,i}$. If $i-1$ is 
an underpass, one concludes that $c_{i,i+1}\geqslant c_{i-1,i} \geqslant 0$, 
else, one can continue until we meet an underpass $k$, and write 
$$
c_{i,i+1}\geqslant c_{i-1,i}\geqslant \ldots \geqslant c_{k,k+1}\geqslant 0.
$$
Thus if the weight is non-zero, the color of all arcs must be non-negative. 

Similarly, we can show that the color of all arcs muss be at most $n$. 
We already know that $c_{i,i+1}\leqslant n$ if $i$ is an overpass. Else, if 
$k$ is the overpass immediately before $i$, we have
$$
c_{i,i+1}\leqslant c_{i-1,i} \ldots \leqslant c_{k,k+1}\leqslant n.
$$
Thus any non-zero weight corresponds to an element of $S_{D,n}$.
\end{proof}

\subsection{The annihilator ideal of the summand of the 
state sum}

It is easy to see that the summand $w_D(n,k)$ of the state sum~\eqref{eq.Jw}
is a $q$-proper hypergeometric function in the sense of~\cite{WZ}. In this
section we compute generators of its annihilator ideal. To do so, we compute 
the effect of the shift operators $E$, $E_0$ and $E_c$ on $w_D(n,k)$.
Each operator is acting on exactly one of the $c(D)+2$ variables 
$(n,k)$ leaving all others fixed.

\begin{itemize}
\item
$E$ shifts $n$ to $n+1$.
\item
$E_0$ shifts $k_0$ to $k_0+1$. 
As the color of any other arc of $D$ is of the form 
$k_0+\underset{c\in X(D)}{\sum} \varepsilon_c k_c$ 
with $\varepsilon_c \in \lbrace -1,0,1\rbrace$, the operator $E_0$ 
actually shifts the color of all arcs up by $1$.
\item
$E_c$ for each crossing $c$ shifts $k_c$ to $k_c+1$.
\end{itemize}

The propositions of this section will match, after setting $q=1$, with
the gluing equations of the $5T$-spine of the knot projection.


Because we will later reduce our equations by plugging $q=1$, it will 
only matter to us that they are exact up to fixed powers of $q$. We will 
write $q^*$ for a power of $q$ which does not depend on $(n,k)$.

Let us start by considering the effect of $E_0$ on $w$.

\begin{proposition}
\label{prop:E_0-rec}
The summand $w_D(n,k)$ of the colored Jones polynomial satisfies:
\begin{equation}
\label{eq:E_0-rec}
\frac{ E_0 w_D(n,k)}{w_D(n,k)}=q^*\underset{c \in X(D)}{
\prod}\left(\frac{q^n}{q^a q^b} \right)^{\varepsilon(c)}
\frac{(1-q^{n-a'})(1-q^{b+1})}{(1-q^{n-a})(1-q^{b'+1})}
\end{equation}
\end{proposition}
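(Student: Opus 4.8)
The plan is to exploit the product structure of the summand. Recall from Figure~\ref{fig:CJPweights} that $w_D(n,k)=\prod_{X} w(X,r)$ is a product of local weights, one for each crossing and each local extremum, and that $E_0$ shifts $k_0\mapsto k_0+1$ while fixing $n$ and every $k_c$. Since the color of each arc has the form $k_0+\sum_{c}\varepsilon_c k_c$, the effect of $E_0$ is to raise the color of \emph{every} arc by $1$; in particular at each crossing it sends $(a,a',b,b')\mapsto(a+1,a'+1,b+1,b'+1)$ and leaves the crossing shift $k=a'-a=b-b'$ unchanged. Because the ratio of a product is the product of the ratios, it suffices to compute $\frac{E_0 w(X,r)}{w(X,r)}$ for each local piece $X$ and then multiply. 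First I would dispose of the extrema: each carries a weight of the form $q^{\pm(2a-n)/4}$, so raising $a$ by $1$ multiplies it by $q^{\pm 1/2}$, a power of $q$ independent of $(n,k)$. Hence every extremum contributes only to the symbol $q^*$, and all $(n,k)$-dependence comes from the crossings.

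For a crossing I would split its weight into the $q$-power prefactor (times the sign $(-1)^k$ in the negative case) and the quantum-factorial part $\frac{(q)_{n-a}}{(q)_{n-a'}}\frac{(q)_b}{(q)_{b'}(q)_k}$, which is identical for the two signs. The factorial part is the easy, sign-independent piece: applying $(q)_{m}=(1-q^{m})(q)_{m-1}$ to each of the four shifted factorials and noting that $(q)_k$ is fixed, one obtains the ratio
\[
\frac{(1-q^{n-a'})(1-q^{b+1})}{(1-q^{n-a})(1-q^{b'+1})},
\]
which is exactly the rational factor in the claimed formula. Since $(-1)^k$ is $E_0$-invariant, the sign plays no role.

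The only genuine computation, and the step I expect to be the main bookkeeping obstacle, is the prefactor. For the positive crossing the exponent is $\tfrac12(n+na+nb'-a'b'-ab)$; expanding the same expression with each of $a,a',b,b'$ increased by $1$ and subtracting, the quadratic terms cancel and the increment is $\tfrac12(2n-a-a'-b-b'-2)$. Here I would invoke the admissibility relations $a'=a+k$, $b'=b-k$, which give $\tfrac12(a+a'+b+b')=a+b$, so the exponent increment is $n-a-b$ up to an $(n,k)$-independent constant, yielding a prefactor ratio $\left(\tfrac{q^n}{q^a q^b}\right)^{+1}q^*$. The negative-crossing exponent $\tfrac12(-n-na'-nb+a'b+ab'-a'+a)$ is handled identically; the linear terms change sign and the increment becomes $-(n-a-b)$ up to a constant, producing $\left(\tfrac{q^n}{q^a q^b}\right)^{-1}q^*$. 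In both cases this matches $\left(\tfrac{q^n}{q^a q^b}\right)^{\varepsilon(c)}$ up to $q^*$, with $\varepsilon(c)=\pm1$ the sign of the crossing.

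Finally I would assemble the pieces: multiplying the per-crossing ratios over all $c\in X(D)$ and absorbing the extrema factors $q^{\pm1/2}$ together with the constant $q^{\pm1}$ from each prefactor into the single symbol $q^*$ yields
\[
\frac{E_0 w_D(n,k)}{w_D(n,k)}=q^*\prod_{c\in X(D)}\left(\frac{q^n}{q^a q^b}\right)^{\varepsilon(c)}\frac{(1-q^{n-a'})(1-q^{b+1})}{(1-q^{n-a})(1-q^{b'+1})},
\]
as claimed. The argument is entirely local, crossing-by-crossing; no global feature of the diagram enters beyond the fact that the weight is a product of local factors and that $E_0$ acts on all arc colors uniformly.
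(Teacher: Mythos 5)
Your proposal is correct and follows essentially the same route as the paper's proof: discard the extremum weights and all linear powers of $q$ into $q^*$, split each crossing weight into the $q$-factorial part (whose ratio under the uniform shift of all four colors gives $\frac{(1-q^{n-a'})(1-q^{b+1})}{(1-q^{n-a})(1-q^{b'+1})}$) and the exponential prefactor (whose increment, after using $a'=a+k$, $b'=b-k$, gives $(q^n/q^{a+b})^{\varepsilon(c)}$ up to $q^*$). The computations match the paper's factors $\mu(c)$ and $\nu(c)$ exactly.
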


\begin{remark} 
The denominators in the above equations actually vanish if $k \notin S_{D,n}$. 
To obtain recurrence relations that are valid for any $(n,k)$, we can 
simply move each denominator to the other side of the equation. The 
convention $\frac{1}{(q)_i}=0$ if $i<0$ will ensure that the equations 
still hold.
\end{remark}

\begin{proof}
Let us note first that the weights of local extrema are linear powers of $q$. 
When computing the ratio $\frac{E_0 w_D(n,k)}{w_D(n,k)}$ those weights 
will only contribute to a $q^*$ factor. Thus we can discard those weights 
while trying to prove Proposition~\ref{prop:E_0-rec}. We can also 
discard any linear power $q$ from the weights of crossing for the same 
reason.

We also note that one can separate the weights $w(c)$ of crossings 
into a product of two factors $w_>(c)$ and $w_<(c)$, where
$$
w_>(c)=\frac{(q)_{n-a}}{(q)_{n-a'}}\frac{(q)_b}{(q)_{b'}(q)_k}
$$
and 
$$
w_<(c)=\begin{cases} q^{(n+na+nb'-a'b'-ab)/2} \ \textrm{if} \ \varepsilon(c)=+1,
\\ (-1)^k q^{(-n-na'-nb+a'b+ab')/2} \ \textrm{if} \ \varepsilon(c)=-1.
\end{cases}
$$
where $a,a',b,b'$ are the colors of arcs neighboring the crossing $c$, 
following the convention described in Figure \ref{fig:CJPweights}.

Recall that $E_0$ shifts the color of all arcs up by $1$. Up to $q^*$, 
the ratio $\frac{ E_0 w_D(n,k)}{w_D(n,k)}$ is a product of factors 
$\mu(c)=\frac{ E_0 w_>(c)}{w_>(c)}$ and $\nu(c)
=\frac{ E_0 w_<(c)}{w_<(c)}$ for every crossing. We compute that:
$$
\mu(c)=\frac{(q)_{n-a-1}(q)_{n-a'}}{(q)_{n-a}(q)_{n-a'-1}}
\frac{(q)_{b+1}(q)_{b'}}{(q)_b (q)_{b'+1}}=
\frac{(1-q^{n-a'})(1-q^{b+1})}{(1-q^{n-a})(1-q^{b'+1})},
$$
and
$$
\nu(c)=q^*\frac{q^{(n(a+1)+n(b'+1)-(a'+1)(b'+1)-(a+1)(b+1))/2}}{
q^{(na+nb'-a'b'-ab)/2}}=
q^*\frac{q^n}{q^{(a+a'+b+b')/2}}=q^*\frac{q^n}{q^{a+b}},
$$
if $c$ is positive and
$$
\nu(c)=
q^*\frac{q^{(-n(a'+1)-n(b+1)+(a'+1)(b+1)+(a+1)(b'+1))/2}}{q^{(-na'-nb+a'b+ab')/2}}
=q^*\frac{q^{(a+a'+b+b')/2}}{q^n}=q^*\frac{q^{a+b}}{q^n},
$$
if $c$ is negative. This gives Equation~\eqref{eq:E_0-rec}.
\end{proof}

Let us now turn to the effect of operator $E$.

\begin{proposition}
\label{prop:CJPrec}
The summand $w_D(n,k)$ of the colored Jones state sum satisfies:
\begin{equation}
\label{eq:E-rec}
\frac{E w_D(n,k)}{w_D(n,k)}
=q^*\underset{X(D)}{
\prod}q^{\varepsilon(c) \left( \frac{a+b}{2}\right)-\frac{k}{2}}
\left(\frac{1-q^{n+1-a}}{1-q^{n+1-a'}}\right).
\end{equation}
\end{proposition}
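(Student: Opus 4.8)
The plan is to compute the ratio $E w_D(n,k)/w_D(n,k)$ exactly as in the proof of Proposition~\ref{prop:E_0-rec}, the only change being that now the operator $E$ shifts $n \mapsto n+1$ while leaving all the shift variables $k$ fixed; in particular every arc-color $a,a',b,b'$ at every crossing, being a linear function of $k$ with no dependence on $n$, is unchanged by $E$. First I would dispose of the weights of the local extrema: these are fractional powers of $q$ that are linear in $n$, so each contributes a fixed power of $q$ to the ratio, independent of $(n,k)$, and is therefore absorbed into $q^*$. The same applies to any purely $q$-power prefactor of a crossing weight.

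For the genuine contribution of a crossing $c$, I would reuse the factorization $w(c)=w_>(c)w_<(c)$ introduced in the proof of Proposition~\ref{prop:E_0-rec}, with $w_>(c)=\frac{(q)_{n-a}}{(q)_{n-a'}}\frac{(q)_b}{(q)_{b'}(q)_k}$ and $w_<(c)$ the explicit power of $q$ (times a sign $(-1)^k$ in the negative case, which $E$ leaves untouched). Only $(q)_{n-a}$ and $(q)_{n-a'}$ feel the shift in $w_>(c)$, so by the recurrence $(q)_{m+1}=(1-q^{m+1})(q)_m$ one gets
$$
\frac{E w_>(c)}{w_>(c)}=\frac{1-q^{n+1-a}}{1-q^{n+1-a'}},
$$
which is exactly the last factor of~\eqref{eq:E-rec}. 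The factor $w_<(c)$ is a power of $q$, so its ratio under $E$ is again a power of $q$: substituting $n+1$ for $n$ in the exponent gives $q^{(1+a+b')/2}$ at a positive crossing and $q^{-(1+a'+b)/2}$ at a negative one.

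The last step is to merge the two sign cases. Using the admissibility relation $a'-a=b-b'=k$, I would rewrite $b'=b-k$ in the positive case and $a'=a+k$ in the negative case, turning the two expressions into $q^*\,q^{(a+b)/2-k/2}$ and $q^*\,q^{-(a+b)/2-k/2}$ respectively, that is, into $q^*\,q^{\varepsilon(c)(a+b)/2-k/2}$ in both cases. Multiplying the per-crossing ratios over all of $X(D)$ and collecting the stray powers of $q$ into a single $q^*$ then yields~\eqref{eq:E-rec}. The only care needed is the bookkeeping of the half-integer powers of $q$ and the consistent use of $a'-a=b-b'=k$ to express everything through $a,b,k$ as in the statement; there is no conceptual obstacle here, the argument being a direct analogue of the $E_0$ computation.
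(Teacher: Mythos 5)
Your proposal is correct and follows essentially the same route as the paper's own proof: the same factorization $w(c)=w_>(c)w_<(c)$, the same observation that $E$ fixes all arc colors so only the $(q)_{n-a}/(q)_{n-a'}$ part of $w_>(c)$ and the $n$-dependent exponents of $w_<(c)$ contribute, and the same use of $b'=b-k$ (resp.\ $a'=a+k$) to unify the two sign cases into $q^{\varepsilon(c)(a+b)/2-k/2}$.
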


\begin{proof}
Again, we can safely ignore the contribution of weights of local extrema 
and any linear power of $q$ in the weights of crossings as they just 
contribute to a $q^*$ factor.
First, note that the effect of $E$ is to shift $n$ up by $1$ 
and leave the colors of all arcs invariant. Then, as in the previous 
Proposition, any crossing $c$ contributes to the ratio by the product 
of two factors $\mu(c)$ and $\nu(c)$, where
$$
\mu(c)=\frac{E w_>(c)}{w_>(c)}=
\frac{(q)_{n+1-a}(q)_{n-a'}}{(q)_{n+1-a'}(q)_{n-a}}=
\frac{(1-q^{n+1-a})}{(1-q^{n+1-a'})},
$$
and 
$$
\nu (c)=\frac{E w_<(c)}{w_<(c)}=
q^*\frac{q^{\frac{(n+1)a+(n+1)b'-a'b'-ab}{2}}}{
q^{\frac{na+nb'-a'b'-ab}{2}}}=q^* q^{(a+b')/2}=q^* q^{(a+b)/2-k/2},
$$
as $b'=b-k$, if $c$ is a positive crossing. For $c$ a negative 
crossing, we have:
$$
\nu(c)=q^*\frac{(-1)^k q^{\frac{(-(n+1)a'-(n+1)b+a'b+ab')}{2}}}{
(-1)^k q^{\frac{(-na'-nb+a'b+ab')}{2}}}=q^* q^{(-a'-b)/2}=q^* q^{-(a+b)/2-k/2}.
$$
as $a'=a+k$. Combining the factors $\mu(c)$ and $\nu(c)$ we get 
Equation~\eqref{eq:E-rec}.
\end{proof}

\begin{proposition}
\label{prop:E_c-rec}
Fix a labeled diagram $D$ as in Section~\ref{sub.labelD}. 
Let $c$ be a crossing of $D$ with labels $j<j'$.
Then the summand $w_D(n,k)$ of the colored Jones polynomial satisfies:
\begin{equation}
\label{eq:E_c-rec-overpass}
\frac{E_c w_D(n,k)}{w_D(n,k)}=q^* F_c\underset{l \in O(D)\cap (j,j')}{\prod} 
\left(\frac{q^n}{q^b q^{b'}}\right)^{\frac{\varepsilon(l)}{2}}
\frac{1-q^{n-a'}}{1-q^{n-a}} \ 
\underset{l \in U(D) \cap (j,j')}{\prod} 
\left(\frac{q^n}{q^a q^{a'}}\right)^{\frac{\varepsilon(l)}{2}}
\frac{1-q^{b+1}}{1-q^{b'+1}},
\end{equation}
if $j$ is an overpass and
\begin{equation}
\label{eq:E_c-rec-underpass}
\frac{E_c w_D(n,k)}{w_D(n,k)}=q^* F_c \underset{l \in O(D)\cap (j,j')}{\prod} 
\left(\frac{q^n}{q^b q^{b'}}\right)^{-\frac{\varepsilon(l)}{2}}
\frac{1-q^{n+1-a}}{1-q^{n+1-a'}} 
\underset{l \in U(D) \cap (j,j')}{\prod} 
\left(\frac{q^n}{q^a q^{a'}}\right)^{-\frac{\varepsilon(l)}{2}}
\frac{1-q^{b'}}{1-q^b},
\end{equation}
if $j$ is an underpass. In the above, we set
$$
F_c=
\begin{cases}
q^{-\frac{a_c+b_c'}{2}}\left(\frac{(1-q^{b_c+1})(1-q^{n-a_c'})}{1-q^{k_c+1}}
\right)
& \text{if $j$ is an overpass and $\varepsilon (c)=+1$},
\\
-q^{\frac{a_c'+b_c}{2}-n}\left(\frac{(1-q^{b_c+1})(1-q^{n-a_c'})}{1-q^{k_c+1}}
\right)
& \text{if $j$ is an overpass and $\varepsilon (c)=-1$},
\\
q^{\frac{a_c'+b_c}{2}-n}\left(\frac{(1-q^{b_c'})(1-q^{n-a_c+1})}{1-q^{k_c+1}}
\right)
& \text{if $j$ is an underpass and $\varepsilon (c)=+1$}, 
\\
-q^{-\frac{a_c+b_c'}{2}}\left(\frac{(1-q^{b_c'})(1-q^{n-a_c+1})}{1-q^{k_c+1}}
\right)
& \text{if $j$ is an underpass and $\varepsilon (c)=-1$} \,.
\end{cases}
$$
\end{proposition}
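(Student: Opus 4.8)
The plan is to follow exactly the template of the proofs of Propositions~\ref{prop:E_0-rec} and~\ref{prop:CJPrec}, the new ingredient being a careful bookkeeping of which arc-colors are shifted by $E_c$. First I would determine this combinatorial input. Since the color of an arc is $k_0+\sum_{c'\in X(D)}\varepsilon_{c'}k_{c'}$ with $\varepsilon_{c'}\in\{-1,0,1\}$, and passing a crossing as an overpass (resp. underpass) adds (resp. subtracts) its shift (this is the admissibility $a'=a+k$, $b'=b-k$), the variable $k_c$ enters an arc color with coefficient $+1$ exactly on the arcs lying in $[j,j']$ when $j$ is the overpass of $c$, with coefficient $-1$ on those arcs when $j$ is the underpass, and with coefficient $0$ elsewhere. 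Equivalently, $E_c$ shifts the colors of all arcs of the loop $\gamma_c=[j,j']/_{j\sim j'}$ up by one (if $j$ is an overpass) or down by one (if $j$ is an underpass), leaving every other arc-color fixed. This is precisely the quantum counterpart of the winding-number pattern of $\gamma_c$ used in the proof of Proposition~\ref{prop:loopeq}.

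Next, as in the previous two proofs, I would discard the weights of local extrema and all linear powers of $q$ in the crossing weights, since these only contribute a global factor $q^*$, and split each crossing weight as $w(l)=w_>(l)w_<(l)$ with $w_>(l)=\frac{(q)_{n-a}}{(q)_{n-a'}}\frac{(q)_b}{(q)_{b'}(q)_k}$ and $w_<(l)$ the sign-times-power-of-$q$ factor. The ratio $E_cw_D/w_D$ then factors over crossings as $\prod_l \mu(l)\nu(l)$ with $\mu(l)=E_cw_>(l)/w_>(l)$ and $\nu(l)=E_cw_<(l)/w_<(l)$. A crossing $l$ none of whose arcs lie in $[j,j']$ has $\mu(l)=\nu(l)=1$. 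The heart of the computation is to show that every other crossing's contribution factors as a product over its labels lying in $(j,j')$: each overpass label $\ell$ contributes $\big(q^n/(q^bq^{b'})\big)^{\varepsilon(\ell)/2}\,\frac{1-q^{n-a'}}{1-q^{n-a}}$ and each underpass label contributes $\big(q^n/(q^aq^{a'})\big)^{\varepsilon(\ell)/2}\,\frac{1-q^{b+1}}{1-q^{b'+1}}$ (in the case $j$ an overpass). I would verify this by two direct calculations, exactly parallel to the computation of $\mu(c),\nu(c)$ in Proposition~\ref{prop:E_0-rec}: for a crossing meeting $\gamma_c$ in a single strand (one label in $(j,j')$) only two of $a,a',b,b'$ shift while $k$ is unchanged, producing exactly one such factor; for a self-crossing of $\gamma_c$ (both labels in $(j,j')$) all four of $a,a',b,b'$ shift by the same unit and $k$ is again unchanged, and the resulting $\mu(l)\nu(l)$ splits precisely into the product of one overpass-label factor and one underpass-label factor. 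Assembling these over all $\ell\in(j,j')$ gives the two products in~\eqref{eq:E_c-rec-overpass}.

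Finally I would isolate the contribution of the crossing $c$ itself, which yields the prefactor $F_c$. Here the essential difference is that $k_c$ is the shift of $c$, so under $E_c$ one has $k_c\mapsto k_c+1$ in addition to $a'_c\mapsto a'_c+1$, $b_c\mapsto b_c+1$ (when $j$ is an overpass), the colors $a_c,b'_c$ staying fixed; the shift of $k_c$ is what produces the denominator $1-q^{k_c+1}$ coming from the factor $1/(q)_{k_c}$ in $w_>(c)$. A direct computation of $\mu(c)\nu(c)$ then gives $q^{-(a_c+b'_c)/2}\frac{(1-q^{b_c+1})(1-q^{n-a'_c})}{1-q^{k_c+1}}$, which is $F_c$ in the first case; the three remaining cases of $F_c$ come from the symmetric situation where $j$ is an underpass and from the sign $(-1)^{k_c}$ in $w_<(c)$ for a negative crossing, which is flipped by $E_c$ and accounts for the minus signs in $F_c$ precisely when $\varepsilon(c)=-1$. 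The case $j$ an underpass of~\eqref{eq:E_c-rec-underpass} is entirely symmetric, obtained by interchanging the roles of the over- and under-strands and hence inverting the exponents $\varepsilon(l)/2$. I expect the main obstacle to be the sign and half-integer-power bookkeeping rather than anything conceptual: making sure the $\varepsilon(\ell)/2$ powers of $q$ distribute correctly between the two sub-products, in particular that a self-crossing, contributing two labels, is counted once in each product, and that the $(-1)^{k_c}$ sign is attributed to $F_c$ and to no other factor. This is the same kind of careful accounting that appears in the classical computation of $L_c$ in Proposition~\ref{prop:loopeq}, with which~\eqref{eq:E_c-rec-overpass} is designed to match at $q=1$.
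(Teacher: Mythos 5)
Your proposal is correct and follows essentially the same route as the paper's proof: identify that $E_c$ shifts the colors of exactly the arcs in $[j,j']$ up or down by one according to whether $j$ is an over- or underpass, discard extrema and linear $q$-powers into $q^*$, split each crossing weight into $w_>$ and $w_<$ factors attributed to its two labels, compute the per-label ratios $\mu(l)\nu(l)$, and isolate the crossing $c$ itself to produce $F_c$ (with the $1-q^{k_c+1}$ denominator from $1/(q)_{k_c}$ and the sign flip of $(-1)^{k_c}$ for negative crossings). The only cosmetic difference is that you organize the middle step by crossing type (single strand of $\gamma_c$ versus self-crossing) before splitting into per-label factors, whereas the paper splits $w_>$ into overpass and underpass factors from the outset; the computations are the same.
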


\begin{proof}
Let $c$ be a crossing with labels $j<j'$. The effect of $E_c$ is to shift 
$k_c$ up by $1$. Note that the colors of arcs 
$[k,k+1]\subset [1,j] \cup [j',1]$ do not depend on $k_c$, while the 
colors of arcs $[k,k+1] \subset [j,j']$ are of the form 
$c_0+\varepsilon k_c$, where $c_0$ does not depend on $k_c$ and 
$\varepsilon =1$ if $j$ is an overpass, $\varepsilon=-1$ else. Thus the 
effect of $E_c$ is to shift the colors of arcs in $[j,j']$ up by $1$ 
(if $j$ is an overpass) or down by $1$ (if $j$ is an underpass).

As before we neglect the weights of local extrema and any linear power 
$q$ in the weights of crossings.
Let us write $a,a',b,b'$ for the colors of the arcs neighboring a 
crossing $c' \in (j,j')$ with labels $l<l'$, let $k=a'-a=b-b'$.

First we note that the weights 
$w_>(c')=\frac{(q)_{n-a}}{(q)_{n-a'}}\frac{(q)_b}{(q)_{b'}(q)_k}$ can 
separated into a factor $w_>(l)=\frac{(q)_{n-a}}{(q)_{n-a'}}$ associated 
to the overpass $l$ and a factor $w_>(l')=\frac{(q)_b}{(q)_{b'}(q)_{k}}$ 
associated to the underpass $l'$. The weights $w_<(c')$ are not separable 
in the same way; however the ratios $\nu(c')=\frac{E_c w_<(c')}{w_<(c')}$ 
are linear powers of $q$ and thus we can compute those factors up to $q^*$ 
as a product of two factors $\nu(l),\nu(l')$, where in $\nu(l)$ we apply 
the shift only to the colors $a,a'$ and in $\nu(l')$ we apply the shift 
only to the colors $b,b'$.

Now we compute the factors $\mu(l)=\frac{E_c w_>(l)}{w_>(l)}$ and 
$\nu(l)$ associated to over- or underpasses.

Note that if $l \notin \llbracket j,j' \rrbracket$, then no arc of 
the over- or underpass $l$ has its color changed under the shift $E_c$. 
Thus $\mu(l),\nu(l)=1$ in this case.

Consider $l \in (j,j')$ that corresponds to a positive crossing. 
Assume first that $l$ is an overpass. If $j$ is an overpass, the operator 
$E_c$ shifts the colors $a,a'$ up by $1$, and we have
$$
\mu(l)=\frac{(q)_{n-(a+1)}(q)_{n-a'}}{(q)_{n-(a'+1)}(q)_{n-a}}=
\frac{1-q^{n-a'}}{1-q^{n-a}}, \ \textrm{and} \ \nu(l)=
q^*\frac{q^{\frac{n(a+1)-(a'+1)b'-(a+1)b}{2}}}{q^{\frac{na-a'b-ab}{2}}}=
q^* q^{\frac{n-b-b'}{2}}.
$$
If $j$ was an underpass instead, colors $a,a'$ are shifted down by $1$ 
under $E_c$, so that
$$
\mu(l)=\frac{(q)_{n-(a-1)}(q)_{n-a'}}{(q)_{n-(a'-1)}(q)_{n-a}}=
\frac{1-q^{n+1-a}}{1-q^{n+1-a'}}, \ \textrm{and} \ \nu(l)=
q^*\frac{q^{\frac{n(a-1)-(a'-1)b'-(a-1)b}{2}}}{q^{\frac{na-a'b-ab}{2}}}=
q^* q^{-\frac{n-b-b'}{2}}.
$$
Now if $l \in (j,j')$ is an underpass and $j$ is an overpass, the colors 
$b,b'$ are shifted up by $1$ under $E_c$ and we get:
$$
\mu(l)=\frac{(q)_{b+1}(q)_{b'}}{(q)_{b'+1}(q)_b}=
\frac{1-q^{b+1}}{1-q^{b'+1}}, \ \textrm{and} \ \nu(l)=
q^*\frac{q^{\frac{n(b'+1)-a'(b'+1)-a(b+1)}{2}}}{q^{\frac{(nb'-a'b'-ab}{2}}}=
q^* q^{\frac{(n-a'-a)}{2}}.
$$
Finally if $j$ is an underpass instead, colors $b,b'$ are shifted down by 
$1$ and:
$$
\mu(l)=\frac{(q)_{b-1}(q)_{b'}}{(q)_{b'-1}(q)_b}=\frac{1-q^{b'}}{1-q^b}, 
\ \textrm{and} \ \nu(l)=q^*\frac{q^{\frac{n(b'-1)-a'(b'-1)-a(b-1)}{2}}}{
q^{\frac{nb'-a'b'-ab}{2}}}=q^* q^{\frac{-(n-a'-a)}{2}}.
$$
We see that those factors match with the ones in 
Equation \eqref{eq:E_c-rec-overpass} and \eqref{eq:E_c-rec-underpass} 
considering $\varepsilon(l)=+1$. If $l$ corresponds to a negative crossing, 
only the $\nu(l)$ factor is changed. The computation of the $\nu(l)$ 
factors is similar and left to the reader.

There is now just one factor to be considered: the factor 
$F_c=\frac{E_c w(c)}{w(c)}$ coming from crossing $c$. Assume that $j$ 
is a positive overpass, then $E_c$ shifts the colors $a_c'$ and $b_c$ 
up by one and leaves colors $a_c,b_c'$ invariant. Also here $E_c$ shifts 
$k_c$ up by one. We get
$$
\mu(c)=\frac{E_c w_>(c)}{w_>(c)}=\frac{(q)_{n-a_c'}}{
(q)_{n-(a_c'+1)}}\frac{(q)_{b_c+1}(q)_k}{(q)_{b_c}(q)_{k+1}}=
\frac{(1-q^{n-a_c'})(1-q^{b_c+1})}{(1-q^{k_c+1})}
$$
and 
$$
\nu(c)=\frac{E_c w_<(c)}{w_<(c)}=q^*\frac{q^{\frac{-(a_c'+1)b_c-a_c(b_c+1)}{2}}}{
q^{\frac{-a_c'b_c-a_cb_c}{2}}}=q^* q^{-\frac{a_c+b_c'}{2}}.
$$
Thus $F_c=\mu(c)\nu(c)$ matches with the formula of 
Proposition~\ref{prop:CJPrec}. The other possibilities for $j$ (negative 
overpass, positive underpass, negative underpass) yield similar 
computations and are left to the reader.
\end{proof}

Recall that the annihilator ideal $\Ann(w_D)$ is a left ideal of the
ring $\BQ[q,Q,Q_\vc] \langle E, E_\vc \rangle$ where 
$Q_\vc=(Q_0,\dots,Q_{c(D)})$ and $E_\vc=(E_0,\dots,E_{c(D)})$. 
Let $\Ann_\rat(w_D)$ denote the corresponding ideal of the ring
$\BQ(q,Q,Q_\vc)\langle E, E_\vc \rangle$. Let $R$, $R_c$ 
(for $c=1,\dots, c(D)$) and $R_0$ denote the expressions on the right
hand side of Equations~\eqref{eq:E-rec}, \eqref{eq:E_0-rec} and
\eqref{eq:E_c-rec-overpass}, \eqref{eq:E_c-rec-underpass} 
respectively. 

\begin{proposition}
\label{prop.ann}
The ideal $\Ann_\rat(w_D)$ is generated by the set
\be
\label{eq.genann}
\{E_c-R_c(q,Q,Q_c), \,\, c=0, \cdots, c(D), \,\, E-R_c(q,Q,Q_c) \} \,.
\ee
\end{proposition}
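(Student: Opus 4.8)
The plan is to prove the two inclusions separately, the easy one by invoking the three computations already made and the hard one by a cyclic-module dimension count. Write $W=\BQ(q,Q,Q_\vc)\langle E,E_\vc\rangle$ for the rational Weyl algebra and let $F=\BQ(q,Q,Q_\vc)$ be its commutative subring of multiplication operators, which is a field. First I would record that the proposed generators annihilate $w_D$. This is precisely the content of Propositions~\ref{prop:E_0-rec}, \ref{prop:CJPrec} and~\ref{prop:E_c-rec}: they assert $E_0 w_D=R_0 w_D$, $E w_D=R w_D$ and $E_c w_D=R_c w_D$, where each of $R$, $R_0$, $R_c$ is a rational function of the multiplication operators — the defining feature of a proper $q$-hypergeometric summand. (The denominators appearing in these ratios vanish only on the lower-dimensional locus where $w_D$ itself is governed by the convention $1/(q)_i=0$, so moving them across yields honest identities in $W$; this is exactly why we work with $\Ann_\rat$ rather than with $\Ann$. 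The half-integer powers of $q$ and of the $Q$-variables in~\eqref{eq:E-rec}–\eqref{eq:E_c-rec-underpass} are harmless, being absorbed by passing to the extension of $F$ by the relevant square roots, over which the argument below is identical.) Denoting by $I$ the left ideal generated by the set~\eqref{eq.genann}, we thus get $I\subseteq \Ann_\rat(w_D)$.

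For the reverse inclusion I would show that the quotient $W/I$ has dimension at most one over $F$. Let $v$ denote the image of $1$ in the left $W$-module $W/I$. Since every generator $E_\bullet-R_\bullet$ lies in $I$, in $W/I$ we have $Ev=Rv$, $E_0v=R_0v$ and $E_cv=R_cv$, with all right-hand sides in $F\cdot v$. Every element of $W$ can be rewritten, using the $q$-commutation relations to push all shift operators to the right, as a finite $F$-linear combination of monomials $E^{a}E_0^{a_0}\cdots E_{c(D)}^{a_{c(D)}}$. Applying such a monomial to $v$ and repeatedly using $E_\bullet(f\,v)=(\sigma_\bullet f)\,E_\bullet v=(\sigma_\bullet f)\,R_\bullet\,v$, where $\sigma_\bullet$ is the automorphism of $F$ induced by the corresponding commutation relation, one reduces $E^\alpha v$ to an element of $F\cdot v$ by induction on the total shift-degree. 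Hence $W/I=F\cdot v$ and $\dim_F(W/I)\le 1$.

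To conclude I would feed in the existence of the nonzero function $w_D$. Because $w_D\neq 0$, the unit $1$ does not annihilate it, so $\Ann_\rat(w_D)\neq W$ and the quotient $W/\Ann_\rat(w_D)$ is nonzero. But this quotient is a further quotient of $W/I$, which has $F$-dimension at most $1$; a nonzero quotient of a space of dimension $\le 1$ forces $\dim_F(W/I)=1$ and makes the surjection $W/I\twoheadrightarrow W/\Ann_\rat(w_D)$ an isomorphism, i.e. $I=\Ann_\rat(w_D)$, as claimed. Note that the same existence of $w_D$ automatically supplies the compatibility (integrability) of the first-order relations: no separate consistency check is needed, since an inconsistent system would force $I=W$, contradicting $\dim_F(W/I)=1$. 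The main obstacle is the normal-form reduction of the middle paragraph: one must verify that moving shifts to the right and substituting $E_\bullet\mapsto R_\bullet$ genuinely terminates and lands in $F\cdot v$, keeping careful track of the twisting automorphisms $\sigma_\bullet$ and of the square-root extension needed to make each $R_\bullet$ a bona fide element of (an extension of) $F$.
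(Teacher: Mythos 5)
Your proof is correct and is essentially the argument the paper itself uses (it writes it out for the localized variant, Proposition~\ref{prop.ann2}, in exactly this form): the generators annihilate $w_D$ by Propositions~\ref{prop:E_0-rec}, \ref{prop:CJPrec}, \ref{prop:E_c-rec}, and any annihilating operator is reduced modulo the first-order relations $E_\bullet - R_\bullet$ to an element of the coefficient field, which must vanish since $w_D\neq 0$; your $\dim_F(W/I)\le 1$ phrasing is the same degree-reduction in module language. The only cosmetic difference is that the paper relies on the parity argument of Proposition~\ref{prop:sqroot} to see that the half-integer powers combine into honest elements of $\BQ(q,Q,Q_\vc)$, rather than passing to a square-root extension.
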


Below, we will need to specialize our operators to $q=1$. To make this
possible, we introduce the subring $Q_\loc(q,Q,Q_c)$ of the field 
$\BQ(q,Q,Q_c)$ that consists of all rational functions that are regular 
(i.e., well-defined) at $q=1$. 

Let $\Ann_{\rat,\loc}(w_D) = \Ann_\rat(w_D) \cap 
\BQ_\loc(q,Q,Q_c) \langle E, E_c \rangle$
denote the left ideal of the ring $\BQ_\loc(q,Q,Q_c) \langle E, E_c \rangle$.

\begin{proposition}
\label{prop.ann2}
The ideal $\Ann_{\rat,\loc}(w_D)$ is generated by the set~\eqref{eq.genann}.
\end{proposition}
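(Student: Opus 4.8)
The plan is to deduce Proposition~\ref{prop.ann2} from its non-localized counterpart, Proposition~\ref{prop.ann}, by running the reduction to normal form modulo the generators~\eqref{eq.genann} and checking that it never leaves the subring of coefficients regular at $q=1$. Write $J_\loc$ for the left ideal of $\BQ_\loc(q,Q,Q_c)\langle E,E_c\rangle$ generated by the set~\eqref{eq.genann}. The whole argument rests on two elementary observations: that the right-hand sides $R,R_0,R_c$ lie in $\BQ_\loc(q,Q,Q_c)$, and that $\BQ_\loc(q,Q,Q_c)$ is stable under the shift automorphisms.

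First I would record that $R$, $R_0$ and $R_c$ of Equations~\eqref{eq:E-rec}, \eqref{eq:E_0-rec}, \eqref{eq:E_c-rec-overpass} and~\eqref{eq:E_c-rec-underpass} all belong to $\BQ_\loc(q,Q,Q_c)$. Each is a product of a fixed power of $q$ (a unit regular at $q=1$), of Laurent monomials in $Q,Q_c$, and of factors $1-q^{\,e}$ in which the exponent $e$ is an affine function of $n$ and the $k_c$ that is not identically constant. Writing $q^{\,e}$ as $q^{\ast}$ times a nontrivial Laurent monomial in $Q,Q_c$, each such factor specializes at $q=1$ to a nonzero Laurent polynomial in $Q,Q_c$; in particular no denominator vanishes identically at $q=1$. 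Hence the generators~\eqref{eq.genann} already lie in $\BQ_\loc(q,Q,Q_c)\langle E,E_c\rangle$, which gives the easy inclusion $J_\loc\subseteq\Ann_{\rat,\loc}(w_D)$.

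For the reverse inclusion I would use the rewriting provided by $E_c\equiv R_c$ and $E\equiv R$. Let $\sigma$ and $\sigma_c$ denote the automorphisms $Q\mapsto qQ$ and $Q_c\mapsto qQ_c$. The crucial stability is that if $g=P/S$ with $S(1,Q,Q_c)\not\equiv 0$, then $(\sigma_c S)(1,Q,Q_c)=S(1,Q,Q_c)\not\equiv 0$, so $\sigma_c g\in\BQ_\loc(q,Q,Q_c)$ (and likewise for $\sigma$). I then claim, by induction on the total degree in $E,E_c$, that every monomial $m=E^{a}\prod_c E_c^{a_c}$ can be written as
$$
m=S_m+\sum_c C_{m,c}\,(E_c-R_c)+C_m\,(E-R),
$$
with $S_m\in\BQ_\loc(q,Q,Q_c)$ and all $C_{m,c},C_m\in\BQ_\loc(q,Q,Q_c)\langle E,E_c\rangle$. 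For the step one peels off the leftmost operator, say $E_c$ (the case $E$ is identical), and on the already-reduced $m'$ uses $E_c S_{m'}=\sigma_c(S_{m'})\bigl(R_c+(E_c-R_c)\bigr)$ together with $E_c\cdot(\text{correction of }m')$; since $\sigma_c(S_{m'})\in\BQ_\loc$ and left multiplication by $E_c$ preserves $\BQ_\loc$-coefficients by the stability above, the new normal form $S_m=\sigma_c(S_{m'})R_c$ is a product of shifts of the $R$'s and so stays in $\BQ_\loc$.

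Summing over monomials, any $P\in\BQ_\loc(q,Q,Q_c)\langle E,E_c\rangle$ becomes $P=\rho(P)+j$ with $j\in J_\loc$ and $\rho(P)\in\BQ_\loc(q,Q,Q_c)$. By Proposition~\ref{prop.ann} the quotient $\BQ(q,Q,Q_c)\langle E,E_c\rangle/\Ann_\rat(w_D)$ is one-dimensional over $\BQ(q,Q,Q_c)$, spanned by the class of $1$, with $E,E_c$ acting by multiplication by $R,R_c$; as $w_D\neq 0$ this normal form is exactly $\rho(P)=(Pw_D)/w_D$, so $\rho(P)=0$ iff $P\in\Ann_\rat(w_D)$. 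Thus for $P\in\Ann_{\rat,\loc}(w_D)$ we get $\rho(P)=0$ and therefore $P=j\in J_\loc$, completing the proof. I expect the only real obstacle to be bookkeeping: stating the inductive reduction carefully enough that one sees the correction coefficients remain regular at $q=1$. The mathematical content is confined to the regularity of the $R$'s and the shift-stability of $\BQ_\loc(q,Q,Q_c)$; granting these, the division in the quantum Weyl algebra runs verbatim over $\BQ_\loc$ just as it does over $\BQ$.
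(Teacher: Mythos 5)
Your proof is correct and follows essentially the same route as the paper's: both rest on the regularity of $R,R_0,R_c$ at $q=1$ and the shift-stability of $\BQ_\loc(q,Q,Q_\vc)$, then perform division modulo the generators (the paper phrases this as $I$ containing a monic element in each nonzero $(E,E_\vc)$-degree, you as an inductive normal-form reduction) and conclude by observing that the degree-zero remainder annihilates $w_D\neq 0$ and hence vanishes.
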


\begin{proof}
First, let us note that $\BQ_\loc(q,Q,Q_\vc)\langle E,E_\vc \rangle$ is a 
subring of $\BQ(q,Q,Q_\vc)\langle E,E_\vc \rangle.$

Indeed, if $P(q,Q,Q_\vc)$ is in $\BQ_\loc(q,Q,Q_\vc)$ then 
$E P(q,Q,Q_\vc)=P(q,q^{-1}Q,Q_\vc) E$ is also in 
$\BQ_\loc(q,Q,Q_\vc)\langle E,E_\vc \rangle,$ as the denominator of 
$P(q,q^{-1}Q,Q_\vc)$ evaluated at $q=1$ is the same as that of $R(q,Q,Q_\vc)$. 
The same can said for multiplication by one of the $E_\vc$'s.

Secondly, it is easy to see that the elements $R(q,Q,Q_\vc)$ and 
$R_c(q,Q,Q_\vc)$ (for $c=1,\dots,c(D)$) are in $\BQ_\loc(q,Q,Q_\vc)$.
Let $I$ be the left $\BQ_\loc(q,Q,Q_\vc)\langle E,E_\vc \rangle$ ideal 
generated by those elements.

Let us order monomials in $E$ and the $E_\vc$'s using a lexicographic order. 
We claim that $I$ contains a monic element in each non zero 
$(E,E_\vc)$-degree. Indeed, if $E-R(q,Q,Q_\vc)$ is one of the above described 
generators and $(\alpha,\beta_\vc)\in \BN^{c(D)+2}$, multiplying by 
$E^{\alpha}E_\vc^{\beta_\vc}$ on the left we get that $I$ contains an element 
of the form $E^{\alpha+1}E_\vc^{\beta_\vc} -\tilde{R}(q,Q,Q_\vc)E^{\alpha}E_\vc^{\beta_\vc}$ 
where $\tilde{R}(q,Q,Q_\vc) \in \BQ_\loc(q,Q,Q_\vc)$. Using also the generators 
$E_\vc -R_c(q,Q,Q_\vc)$ the claim follows.

Now let $P(q,Q,Q_\vc,E,E_\vc)$ be an arbitrary element $\Ann_{\rat,\loc}(w_D)$. 
We may write
$$
P(q,Q,Q_\vc,E,E_\vc)=\underset{(\alpha,\beta_\vc)\in \BN^{c(D)+2}}{
\sum}R_{\alpha,\beta_\vc}(q,Q,Q_\vc) E^{\alpha}E_\vc^{\beta_\vc} \,.
$$
As $I$ contains a monic element in each non-zero $(E,E_\vc)$ degree, one 
may subtrack elements of $I$ to $P(q,Q,Q_\vc,E,E_\vc)$ to drop its degree 
until we get that $P-S \in \BQ_\loc(q,Q,Q_\vc)$ for some element $S\in I$. 
But $P-S$ must also be in $\Ann_\loc(w_D)$, and as $w_D\neq 0$ it must be zero.
Thus we can conclude that $I=\Ann_{\rat,\loc}(w_D)$.
\end{proof}


\section{Matching the annihilator ideal and the gluing equations}
\label{sec.match}

\subsection{From the annihilator of the state summand to the 
gluing equations variety}

In the previous sections we studied the gluing equations variety $\calG_D$
of a knot diagram $D$ and the state summand $w_D(n,k)$ of the colored Jones
polynomial of $K$. In this section we compare the annihilator ideal
of the summand with the defining ideal of the gluing equations variety,
once we set $q=1$, and conclude that they exactly match.
Let us abbreviate the evaluation of a rational function
$f(q)$ at $q=1$ by  $\ev_q f(q)=f(1)$.

Consider the map $\psi$ defined by:

\be
\label{eq.psi}
\psi: \BQ[Q,Q_\vc][E] \to \BC[\calG_D], \qquad (E, Q, Q_c) \mapsto 
(w_{\lambda}^{-1/2}, w_{\mu}, w_c) 
\ee
where $\BC[\calG_D]$ denotes the coordinate ring of the affine variety
$\calG_D$ and $w_{\lambda}^{-1/2}$ is the element of $\BC[\calG_D]$ described 
in Proposition~\ref{prop:sqroot}. 

The main result which connects the quantum invariant with the classical 
one can be summarized in the following.

\begin{theorem}
\label{thm.match}
{\rm (a)} We have:
\be
(\psi \circ \ev_q \circ \varphi )(\Ann_{\rat,\loc}(w_D)) = 0 \,.
\ee
{\rm (b)} If 
$P(q,Q,E) \in \varphi( \Ann(F) \cap \BQ[q,Q]\langle E, E_k \rangle )$
as in~\eqref{eq.cert}, then $P(q,Q,E)$ annihilates the colored Jones 
polynomial and $P(1,w_\mu,w_\lambda^{-1/2})=0$.
\end{theorem}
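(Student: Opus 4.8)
The plan is to reduce the entire statement to a finite check on the generators of $\Ann_{\rat,\loc}(w_D)$. By Proposition~\ref{prop.ann2} this is a left ideal of $\BQ_\loc(q,Q,Q_\vc)\langle E, E_\vc\rangle$ generated by the operators $E_c - R_c$ (for $c=0,\dots,c(D)$) and $E - R$, where the coefficients $R_c, R$ lie in the commutative ring $\BQ_\loc(q,Q,Q_\vc)$. I would interpret the composite $\psi\circ\ev_q\circ\varphi$ as the ring homomorphism $\chi\colon \BQ_\loc(q,Q,Q_\vc)\langle E,E_\vc\rangle \to \BC[\calG_D]$ determined by $q\mapsto 1$, $Q\mapsto w_\mu$, $Q_c \mapsto w_c$, $E\mapsto w_\lambda^{-1/2}$ and $E_c\mapsto 1$. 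This is well defined precisely because at $q=1$ every $q$-commutation relation $EQ=qQE$ and $E_cQ_c=qQ_cE_c$ degenerates to an ordinary commutation relation, which holds in the commutative target, and it coincides with $\psi\circ\ev_q\circ\varphi$ wherever the latter is literally defined. Since $\chi$ is a ring homomorphism into a commutative ring, it annihilates the whole left ideal as soon as it annihilates each generator, so it suffices to evaluate $\chi$ on $E_c-R_c$ and on $E-R$.

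The key dictionary is that $\chi$ sends $q$ raised to the color of an arc to the corresponding arc parameter. Indeed, the color of $[l,l+1]$ equals $k_0+\sum_c \varepsilon_c k_c$, so $q^{\mathrm{color}}=Q_0\prod_c Q_c^{\varepsilon_c}$ maps to $w_0\prod_c w_c^{\varepsilon_c}$, which is exactly $z_{l,l+1}$ by Proposition~\ref{prop.elimination}; likewise $q^n=Q\mapsto w_\mu$ and $q^{k_c}\mapsto w_c$. Thus the crossing colors $a,a',b,b'$ are sent to the arc parameters $z_a,z_{a'},z_b,z_{b'}$ of Remark~\ref{remark.arcparam}. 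With this dictionary, applying $\ev_q$ to the right-hand side $R_c$ of Proposition~\ref{prop:E_c-rec} reproduces term-by-term the loop-equation expression $L_c$ of Proposition~\ref{prop:loopeq2}: the products over over/underpasses $l\in(j,j')$ match at once, and a short four-case check shows $F_c|_{q=1}=K_c'$. For $c=0$, $\ev_q(R_0)$ from Proposition~\ref{prop:E_0-rec} equals $L_0$ of Proposition~\ref{prop:loopeqK}. Since the loop equations $L_c=1$ and $L_0=1$ hold identically on $\calG_D$ (Proposition~\ref{prop:basis}), we obtain $\chi(E_c-R_c)=1-L_c=0$ for every $c$.

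For the remaining generator, applying $\ev_q$ to the right-hand side $R$ of Proposition~\ref{prop:CJPrec} yields, under the same dictionary, exactly the element $s$ of Proposition~\ref{prop:sqroot}. By that proposition $s^2=1/(w_\lambda L_0)$, and since $L_0=1$ on $\calG_D$ this gives $s^2=w_\lambda^{-1}$, so $s=w_\lambda^{-1/2}=\chi(E)$; hence $\chi(E-R)=w_\lambda^{-1/2}-s=0$. This completes part~(a). Part~(b) follows immediately: writing $P=\varphi(\tilde P)$ for a certificate $\tilde P\in \Ann(w_D)\cap \BQ[q,Q]\langle E,E_k\rangle$, Proposition~\ref{prop:cert} together with the identity $J_K(n)=\sum_k w_D(n,k)$ of~\eqref{eq.Jkn} shows $P\in\Ann(J_K)$, so $P$ annihilates the colored Jones polynomial; and since $\tilde P$ has polynomial coefficients it is regular at $q=1$ and lies in $\Ann_{\rat,\loc}(w_D)$, whence part~(a) gives $P(1,w_\mu,w_\lambda^{-1/2})=\chi(\tilde P)=0$.

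The bulk of the work, and the real content of this section, is the term-by-term identification of the $q=1$ specializations of the recurrence coefficients $R_0,R_c,R$ with the classical quantities $L_0,L_c,s$; this matching is exactly why the earlier sections computed the loop equations and the recurrences of $w_D$ in deliberately parallel forms. The one genuinely technical point I expect to be the main obstacle is the well-definedness of $\chi$: the coefficients $R_c,R$ carry denominators such as $(1-w_c)$ and $(1-w_\mu/z_a)$ that may vanish on some components of $\calG_D$, so strictly speaking $\chi$ takes values in the total ring of fractions of $\BC[\calG_D]$, and the identities above are to be read on the dense open locus where these denominators are invertible. This does not affect the conclusion that the ideal maps to zero, but it is the place where the argument must be phrased carefully.
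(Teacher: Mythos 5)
Your proposal is correct and follows essentially the same route as the paper: reduce to the generators $E-R$, $E_0-R_0$, $E_c-R_c$ of $\Ann_{\rat,\loc}(w_D)$ via Proposition~\ref{prop.ann2}, use the dictionary sending $q$ raised to an arc color to the arc parameter, and match the $q=1$ specializations with $s$, $L_0$ and $L_c$ (respectively $L_c^{-1}$ when $j$ is an underpass), with part (b) following from Proposition~\ref{prop:cert} and part (a). The denominator issue you flag at the end is in fact vacuous: the relevant denominators are, up to units, shape parameters of the form $z''=1-1/z$ or $1-w_c$, which are invertible on all of $\calG_D$ since the gluing equation variety is by definition cut out inside $(\BC^{**})^{5c(D)}$.
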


\begin{proof}
Recall the generators of the annihilator ideal 
$\Ann_{\rat,\loc}(w_D)$ given by Equation~\eqref{eq.genann}, as well as
the functions $L_k-1$ for $k=0,\dots,c(D)$ of the coordinate ring of 
$\calG_D$ defined in Section~\ref{sub.loop}. We will match the two. 

For an arc of the diagram with color $a$, let $Q_a$ be the multiplication 
by $q^a$. We claim that $\varphi(Q_a)=z_a$, the corresponding arc parameter. 
Indeed, for the arc $[1,2]$ we have $\varphi(Q_0)=w_0$ is the arc parameter 
of the arc $[1,2]$, and going from arc $[k-1,k]$ to $[k,k+1]$ we shift the 
multiplication operator by $Q_c^{\pm 1}$ and the arc parameter by $w_c^{\pm 1}$, 
depending on whether $k$ is an over- or underpass.

By Equations \eqref{eq:E-rec} and \eqref{eq:E_0-rec}:
\begin{align*}
R(1,Q,Q_\vc) &=\underset{X(D)}{\prod}\left( 
\frac{1-\frac{Q}{Q_a}}{1-\frac{Q}{Q_{a'}}}\right)
\left(Q_a Q_b \right)^{\frac{\varepsilon(c)}{2}}Q^{-\frac{1}{2}} 
\\
R_0(1,Q,Q_\vc) &
=\underset{c \in X(D)}{\prod}\left( \frac{Q}{Q_a Q_b}
\right)^{\varepsilon(c)} \frac{(1-\frac{Q}{Q_{a'}})(1-Q_b)}{
(1-\frac{Q}{Q_a})(1-Q_{b'})} \,.
\end{align*}

If $c$ is a crossing with labels $j<j'$, and $j$ is an overpass, we have 
by Equation~\eqref{eq:E_c-rec-overpass}:
\begin{align*}
R_c(1,Q,Q_\vc) &=\ev_q(F_c)\underset{k \in (j,j')  
\cap O(D)}{\prod}\left(\frac{Q}{Q_b Q_{b'}}\right)^{\frac{\varepsilon(k)}{2}}
\left( \frac{1-\frac{Q}{Q_{a'}}}{1-\frac{Q}{Q_a}}\right) 
\\ & \quad \times \underset{k \in (j,j')\cap U(D)}{\prod}
\left(\frac{Q}{Q_a Q_{a'}}\right)^{\frac{\varepsilon(k)}{2}}\left(\frac{1-Q_b}{
1-Q_{b'}}\right)
\end{align*}
where 
$$
\ev_q(F_c)=\frac{1}{Q_{b_c'}}
\frac{(1-\frac{Q}{Q_{a_c'}})(1-Q_{b_c})}{(1-Q_c)}
$$ 
if $c$ is a positive crossing for example.
Similarly by Equation~\eqref{eq:E_c-rec-underpass}, if $j$ is an underpass, 
then 
\begin{align*}
R_c(1,Q,Q_\vc)) &=\ev_q(F_c) \underset{l \in O(D)\cap (j,j')}{\prod} 
\left(\frac{Q}{Q_b Q_{b'}}\right)^{-\frac{\varepsilon(k)}{2}}
\left( \frac{1-\frac{Q}{Q_{a}}}{1-\frac{Q}{Q_{a'}}}\right) 
\\ & \quad \times \underset{l \in U(D) \cap (j,j')}{\prod} 
\left(\frac{Q}{Q_a Q_{a'}}\right)^{-\frac{\varepsilon(k)}{2}}
\left(\frac{1-Q_{b'}}{1-Q_b}\right).
\end{align*}

Comparing $(\ev_q \circ \varphi)(E-R(q,Q,Q_\vc))$ with 
Equation~\eqref{eq:sqroot}, we get that 
$$
(\ev_q \circ \varphi)(E-R(q,Q,Q_\vc))=s-s=0 \,.
$$

Comparing $(\ev_q \circ \varphi)(E_0-R_0(q,Q,Q_\vc))$ with 
Equation~\eqref{eq:loopeq3}, we get that 
$$
(\ev_q \circ \varphi)(E_0-R_0(q,Q,Q_\vc))=1-L_0 \,.
$$

Finally, if $c$ is a crossing with labels $j<j'$, comparing  
$(\ev_q \circ \varphi)(E_c-R_c(q,Q,Q_\vc))$ with 
Equation~\eqref{eq:loopequation2}, 
we get that 
$$
(\ev_q \circ \varphi)(E_c-R_c(q,Q,Q_\vc))=1-L_c\,
$$ 
if $j$ is an overpass, while if $j$ is an underpass we get that 
$$
(\ev_q \circ \varphi)(E_c-R_c(q,Q,Q_\vc))=1-L_c^{-1}=L_c^{-1}(L_c-1) \,.
$$
Thus the image of the generators of the ideal $\Ann_{\rat,\loc}(w_D)$ 
by $\ev_q \circ \varphi$ are generators of the ideal $I_D$. This proves
part (a) of Theorem~\ref{thm.match}. Part (b) follows from part (a)
and Equation~\eqref{eq.cert}.
\end{proof}  

\subsection{Proof of Theorem~\ref{thm.1}}
\label{sub.proof.thm1}

\begin{proof}
Fix a labeled, oriented planar projection $D$ of $K$. Then, 
the certificate recursion $\hat{A}^c_D(q,Q,E)$ annihilates the colored Jones 
polynomial, as this is true for all $q$-holonomic sums~\eqref{eq.fF}. This
concludes part (a).
 
For part (b), Theorem~\ref{thm.match} implies that 
$\hat{A}^c_D(1,w_\mu,w_\lambda^{-1/2}) =0 \in \BC[\calG_D]$. In other words,
the function $\hat{A}^c_D(1,w_\mu,w_\lambda^{-1/2})$ in the coordinate ring of
$\calG_D$ is identically zero. Since this is true for every labeled, oriented
diagram $D$ of a knot $K$, this concludes part (b) of Theorem~\ref{thm.1}.
\end{proof}

\subsection*{Acknowledgements} 
S.G. was supported in part by DMS-18-11114. S.G. wishes to thank Dylan
Thurston for enlightening conversations in several occasions regarding 
the octahedral decomposition of knot complements and the state sum 
formulas for the colored Jones polynomial and Christoph Koutchan for
enlightening conversations on $q$-holonomic functions. 
The paper was conceived and completed while both authors were visiting
the Max-Planck Institute for Mathematics in Bonn. The authors wish to
thank the institute for its hospitality. 


\bibliographystyle{hamsalpha}
\bibliography{biblio}
\end{document}